\documentclass[11pt,reqno,a4paper]{amsart}
\usepackage[margin=1in]{geometry}
\usepackage{amsmath, amsthm, amssymb, mathrsfs, mathdots}
\usepackage{enumitem}
\usepackage[colorlinks=true, pdfstartview=FitV, linkcolor=blue,citecolor=blue, urlcolor=blue]{hyperref}
\usepackage{times}
\usepackage[dvipsnames]{xcolor}
\usepackage{subcaption}
\usepackage{mathtools}
\usepackage{bbm}
\usepackage{upgreek}
\usepackage{comment}
\usepackage{dsfont}
\usepackage{tikz}
\usetikzlibrary{calc, decorations.markings, shapes, arrows.meta, patterns, positioning}
\usepackage{graphicx}

 \usepackage[capitalise]{cleveref}

\crefformat{equation}{(#2#1#3)}
\crefrangeformat{equation}{(#3#1#4) to~(#5#2#6)}
\crefmultiformat{equation}{(#2#1#3)}%
{ and~(#2#1#3)}{, (#2#1#3)}{ and~(#2#1#3)}

\def\epsilon{\varepsilon}
 
\def\de{{\partial}}

\def\RR {\mathbb{R}}
\def\CC {\mathbb{C}}
\def\TT {\mathbb{T}}
\def\ZZ {\mathbb{Z}}
\def\NN {\mathbb{N}}

\def\Re{{\rm Re}}
\def\Im{{\rm Im}}
\def\uno{{\rm Id}}

\def\Log{{\rm Log}}

\newcommand{\cH}{\mathcal{H}}
\newcommand{\cI}{\mathcal{I}}
\newcommand{\cJ}{\mathcal{J}}
\newcommand{\cK}{\mathcal{K}}
\newcommand{\cL}{\mathcal{L}}
\newcommand{\cM}{\mathcal{M}}

\newcommand{\cP}{\mathcal{P}}

\newcommand{\cR}{\mathcal{R}}
\newcommand{\cS}{\mathcal{S}}
\newcommand{\cT}{\mathcal{T}}

\newcommand{\cW}{\mathcal{W}}

\newcommand{\Ker}{\mathrm{Ker}}
\newcommand{\Rn}{\mathrm{Ran}}

\newcommand{\mrA}{\mathrm{A}}
\newcommand{\mrB}{\mathrm{B}}
\newcommand{\mrC}{\mathrm{C}}
\newcommand{\im}{\mathrm{i}\,}

\newtheorem{proposition}{Proposition}[section]
\newtheorem{theorem}[proposition]{Theorem}
\newtheorem{corollary}[proposition]{Corollary}
\newtheorem{lemma}[proposition]{Lemma}

\theoremstyle{definition}
\newtheorem{definition}[proposition]{Definition}
\newtheorem{remark}[proposition]{Remark}

\theoremstyle{definition}

\numberwithin{equation}{section}

\title{Instability of  two-dimensional Taylor--Green Vortices}

\author{Gonzalo Cao-Labora}
\author{Maria Colombo}
\author{Michele Dolce}
\author{Paolo Ventura}
\address{Institute of Mathematics, EPFL, Station 8, 1015 Lausanne, Switzerland}
\email{gonzalo.caolabora@epfl.ch}
\email{maria.colombo@epfl.ch}
\email{michele.dolce@epfl.ch}
\email{paolo.ventura@epfl.ch}

\begin{document}

\begin{abstract}
For a wide class of linear Hamiltonian operators we develop a general criterion that characterizes the unstable eigenvalues as the zeros of a holomorphic function
given by the determinant of a finite-dimensional matrix. We apply the latter result to prove the spectral instability of the Taylor--Green vortex in two-dimensional ideal fluids. The linearized Euler operator at this steady state possesses different invariant subspaces, within which we apply our criterion to rule out or detect instabilities.
We show linear stability of odd perturbations, for which the unstable spectrum can appear only on the real axis. We exclude this possibility by applying 
 our stability criterion. 
 Real instabilities, instead, exist and can be detected with the same criterion if we consider suitable rescalings of the Taylor--Green vortex.
 In the subspace of functions even in both variables, the problem is reduced to finding a single complex root of our stability function. We successfully locate this value by combining our general criterion with a rigorous computer-assisted argument. As a consequence, we fully characterize the unstable spectrum of the Taylor--Green vortex. 
\end{abstract}

\maketitle

\section{Introduction}

The motion of a two-dimensional {(2D)} incompressible ideal fluid, evolving in a periodic box, is described by the classical Euler equations. In vorticity formulation they are written as
\begin{equation}
    \label{Euler2Dvorticity}
\begin{cases}
    \partial_t \omega +u\cdot  \nabla  \omega = 0\qquad \text{with } (x,y)\in \TT^2,\, t\geq 0,\\
    u=\nabla^\perp\psi, \qquad \Delta\psi =\omega,
\end{cases}
\end{equation}
where $\TT^2=\RR^2/(2\pi \ZZ^2)$, $u$ is the velocity field,  $\omega=\nabla^\perp\cdot v$ is the vorticity with $\nabla^\perp=(-\de_y,\de_{{x}})$. 

In this paper, we aim at studying the spectral stability properties of the 2--dimensional \emph{Taylor--Green vortex} (also known as \emph{cellular flow}).
This is  a steady solution of \eqref{Euler2Dvorticity}
whose vorticity, stream function and velocity field can be respectively given by
\begin{equation}\label{TaylorGreen11}
\omega_E(x,y):= 2\sin(x)\sin(y), \qquad \psi_E(x,y):=-\sin(x)\sin(y)\, , \qquad  U_E\coloneqq\nabla^\perp \psi_E,
\end{equation}
and note that $\omega_E=F(\Psi_E)$ with $F(s)=-2s$. This flow is commonly used as a benchmark in 2D computational fluid dynamics \cite{sengupta2018non} and was first studied within a wider class of $3D$ vortices by Taylor and Green in \cite{taylor1937mechanism}. Particular features of this flow are its symmetries and the presence of hyperbolic stagnation points, structures  that have been used  in a number of results, from the degeneration of the H\"older regularity of the flow map in \cite{bahouri1994equations} to the  vorticity gradient growth results in \cite{Denisov09,denisov2015double,kiselev2014small,zlatovs2015exponential}. On the other hand, the study of spectral stability properties of steady states is only well-understood for unidirectional flows as shear flows or radial vortices. This analysis was developed in an impressive literature, starting from the seminal contributions of Rayleigh \cite{Rayleigh80}, Kelvin \cite{Thomson1880Proc} and Tollmien \cite{Tollmien1935}, we refer to \cite{Arnold65,friedlander1997nonlinear,lin2003instability,vishik2018instability,Grenier16Duke,bedrossian2019vortex,grenier2020linear} and the book \cite{drazin2004hydrodynamic} on 
hydrodynamic stability for further references and applications of such analysis. Outside the realm of unidirectional flows, only a few notable exceptions relying on special structures are partially understood, namely  certain \textit{cat's eye} flows 
\cite{friedlander2000unstable}, via averaging methods around highly oscillating unstable shear flows, and
 the Kelvin-Stuart's vortices 
 \cite{liao2023stability}. 
 
 For general streamlines geometry the spectral problem complicates significantly, and the Taylor--Green vortex is a fundamental example of an analytic steady state in $\mathbb{T}^2$ 
   whose spectral properties have long remained elusive from the mathematical perspective, see for instance the open problem on the Taylor--Green vortex instability in the recent review by Drivas and Elgindi \cite{Drivas23}. Only recently Zhao, Protas and Shvydkoy \cite{zhao2024inviscid} provided numerical evidence that the flow in \eqref{TaylorGreen11} is spectrally unstable.

To state our result, we first observe that for steady states in which there is a \textit{global} functional relationship between stream function and vorticity, it is convenient to introduce the Poisson bracket notation 
\begin{equation}\label{PoissonBracket}
\{f,g\} := \nabla^\perp f \cdot \nabla g = \de_{x} f \de_{y} g - \de_{y} f \de_{x} g\,.
\end{equation}
In this way, considering a perturbation of the Taylor-Green vortex as $\omega=\omega_E+w$ and linearizing the system \eqref{Euler2Dvorticity} at this steady state, one has that \begin{equation}
    \label{eq:perturbation}\de_tw=\cL_E(w),
\end{equation} with
\begin{align}
\label{cL}
    \cL_E(w):=-\big(\{\psi_E,w\}+\big\{\Delta^{-1}w,F(\psi_E)\big\}\big)=-\{\psi_E,(\uno+2\Delta^{-1})w\} \,, 
\end{align}
where $\cL_E:{{\rm Dom}(\cL_E)\subset L^2_0(\TT^2)}\to L^2_0(\TT^2)${, ${\rm Dom}(\cL_E):= \big\{ w \in L^2_0(\TT^2): \{ \psi_E,w\} \in L^2_0(\TT^2) \big\}$,} and the subscript $0$ is used to denote spaces restricted to zero-average functions.

The main result of this paper is the introduction of a novel instability criterion, discussed more precisely later on, that allows us to fully characterize the spectrum of $\cL_E$ outside of the imaginary axis.
\begin{theorem}\label{linthm}
Let $\cL_E$ be the operator defined in \eqref{cL}. There exists a  unique $\lambda_\star\in [0.1,0.17]-\im [0.57,0.63] $ such that the spectrum of $\cL_E$ outside of the imaginary axis consists of four simple eigenvalues given by
    \begin{equation}
        \label{theeigs}
 \sigma(\cL_E) \setminus \im \mathbb{R} = \{ \lambda_\star, \bar \lambda_\star, -\lambda_\star, -\bar \lambda_\star \}  .  \end{equation}
\end{theorem}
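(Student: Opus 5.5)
Here is the plan. First I would exploit the structure $\cL_E = -\{\psi_E,\cdot\}\circ(\uno+2\Delta^{-1})$. This is a linear Hamiltonian operator $JL$: the transport part $J=-\{\psi_E,\cdot\}$ is skew-adjoint, and $L=\uno+2\Delta^{-1}$ is self-adjoint, bounded and a compact perturbation of the identity.

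Since $\Delta^{-1}$ is compact, the negative space of $L$ is finite dimensional. On $L^2_0$ it is spanned by the modes $|k|^2=1$, i.e. $e^{\pm ix},e^{\pm iy}$, where $1-2/|k|^2=-1$. The modes with $|k|^2=2$ lie in $\Ker L$. Two consequences follow:
\begin{enumerate}
\item[(i)] by the Weyl-type argument for $JL$ with $L-\uno$ compact, the spectrum off $\im\RR$ consists only of isolated eigenvalues of finite multiplicity;
\item[(ii)] the Hamiltonian symmetry (and reality) makes the set of such eigenvalues invariant under $\lambda\mapsto-\lambda$ and $\lambda\mapsto\bar\lambda$.
\end{enumerate}
Pontryagin-type index counting (Grillakis/Kapitula--Kollár--Promislow) then bounds the total algebraic multiplicity of unstable eigenvalues by $n^-(L)$ restricted to the relevant invariant subspace.

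Next I would decompose $L^2_0$ into the invariant subspaces generated by the symmetries of $\psi_E$: parity in $x$ and in $y$, the swap $x\leftrightarrow y$, and the half-period shifts. On each subspace I would count negative directions of $L$. On the odd subspaces, the negative directions have count $1$ with the appropriate sign structure, which forces any unstable eigenvalue to be real. On the even--even subspace, the count permits at most one complex quadruple. This reduces the theorem to two tasks: excluding real instabilities in the odd sectors, and locating exactly one eigenvalue $\lambda_\star$ in the even sector.

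For both tasks I would apply the general criterion announced in the abstract: for $\Re\lambda\neq0$, $\lambda$ is an eigenvalue iff $\det M(\lambda)=0$, where $M$ is a finite matrix. To derive it, write $w=P w+ (\uno-P)w$ with $P$ the projection onto the negative/kernel modes of $L$. Solve on the complement using the invertibility of $\lambda-J L$ there, which holds because $L>0$ on the complement and a coercivity/energy argument applies. This gives $M(\lambda)_{ij}=\langle (\lambda-JL)^{-1}$-type resolvent terms$\rangle$ between the finitely many modes. The function $\det M$ is holomorphic in $\{\Re\lambda>0\}$.

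In the odd sectors, $M(\lambda)$ is real for real $\lambda$. I would show it has a sign on $(0,\infty)$, using monotonicity in $\lambda$ together with the limits at $0^+$ and $\infty$. The limit at $0^+$ is evaluated through the stream-line averaging of $J$, i.e. the projection onto functions of $\psi_E$.

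In the even sector I expect the main obstacle. There I would use the argument principle, or a Newton--Kantorovich/interval argument, on a small box around $[0.1,0.17]-\im[0.57,0.63]$. This requires rigorously computing the resolvent entries of the nonsymmetric transport operator to validated accuracy. I would try a Fourier Galerkin truncation, treating the tail by the coercivity of $L$ on high modes, with interval arithmetic to certify a single simple zero. Uniqueness and simplicity then follow from the index count, and the symmetries produce the other three eigenvalues.
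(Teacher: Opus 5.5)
Your plan has the same architecture as the paper's proof: Hamiltonian factorisation, the four negative modes, parity plus checkerboard splitting, and reality forcing any instability in the one-negative-mode odd sectors to be real. Monotonicity plus a streamline-averaged limit at $0^+$ then excludes it, and an argument principle with rigorous numerics handles $(\mathrm{Ev}_x\mathrm{Ev}_y)^{[\mathrm{odd}]}$. Two steps, however, are organised differently.

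\textbf{The finite-dimensional reduction.} Yours is a Feshbach--Schur complement: compress $\cJ$ to the complement of the negative modes and invert $\lambda-Q\cJ Q\cH$ there. The paper keeps $\cJ$ on the whole space and replaces $\cH$ by the coercive $\cH_s=\cH-\cH_u$, so that $\Phi(\lambda)=\det(\uno+\cH_u\cH_s^{-1}(\cT-\lambda)^{-1}\cT)$. The factor $\cT$ gives $\Phi$ a nondegenerate limit at $0^+$ by Lemma~\ref{lem:strong_convergence}, and Remark~\ref{rem:n1phi0bis} reduces the odd sectors to the single inequality $\|PE_u\|^2>\frac58\|E_u\|^2$.

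In your form, take $e=\sin x$ and let $\mu$ be the spectral measure of $\cH^{1/2}Q\cJ Q\cH^{1/2}$ at $\cH^{1/2}\cJ e$. For real $\lambda$ one gets
\[
M(\lambda)=\lambda\Big(\|e\|^2-\int_{\RR}\frac{d\mu(\nu)}{\lambda^2+\nu^2}\Big).
\]
So $M$ itself tends to $0$ in the relevant case, and it is $M(\lambda)/\lambda$ that is monotone and must be evaluated at $0^+$. That limit involves
\[
\int\nu^{-2}\,d\mu=\inf\{\langle\cH^{-1}(e+k),e+k\rangle:\ k\in\Ker\cJ,\ e+k\perp e\},
\]
and the trial choice $k=-\|e\|^2\|Pe\|^{-2}Pe$ returns exactly the paper's inequality. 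The two routes therefore meet, with the paper's being more direct. In both, the substance you leave unexecuted is proving that inequality. The paper does it using $P\sin x=2\pi/T(h)$, the AGM formula $T(h)=2\pi/M(1,h)$, and $M(1,h)\ge\sqrt h$.

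\textbf{The even sector.} You skip the complex splitting $g\pm\im\varrho g$ and rely on $n^-=2$ plus conjugation. One non-real zero already saturates the count, which gives uniqueness and simplicity at once. That is valid, but it costs a $2\times2$ determinant. The block is not diagonal, since $\cJ\cos x=\sin^2x\cos y$ couples $\cos x$ to $\cos y$, so you need several rigorously enclosed resolvent entries. The paper instead handles a scalar $\Phi$ on $(\mathrm{Ev}_x\mathrm{Ev}_y)^{[\mathrm{odd}]}_+$.

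For those enclosures, a worst-case tail estimate from coercivity of $\cH$ on high modes will not be small. The reason is that $\cJ$ couples $(j,k)$ to $(j\pm1,k\pm1)$ with weights of size $j+k$, and the resolvent solutions are rough at the hyperbolic points. What works is the a posteriori bound $\|(\cT-\lambda)^{-1}\|\le 1/\Re\lambda$ applied to residuals of numerically computed approximate solutions. The primal--dual Lemma~\ref{lem:primal_dual} sharpens this to $\|r\|\,\|r^*\|/\Re\lambda$.

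\textbf{Minor points.} Among the symmetries you list, only the diagonal shift $(x,y)\mapsto(x+\pi,y+\pi)$ commutes with $\cL_E$; the swap and the single half-period shifts reverse the sign of $\cJ$. Make sure the index count you invoke allows an infinite-dimensional $\Ker\cJ$, as Lemma~\ref{lemma:uniqueness_algebra} (or Lin--Zeng) does. Your Fredholm argument for discreteness of the off-axis spectrum covers something the paper uses only implicitly.
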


 The theorem above fully characterizes the \textit{unstable} spectrum of the Taylor-Green vortex. Moreover, applying Shvydkoy and Latushkin's result in \cite{shvidkoy2003essential,shvydkoy2005essential}, the essential spectrum of $\cL_E$ equals $\im \mathbb{R}$.  More in general, they prove (see also Vishik's work \cite{vishik1996spectrum} for $m=-1$) that if the operator is considered  in $H^m(\TT^2)$ with $m\in \ZZ$, the essential spectrum is 
 $\{ |\Re z| \leq |m\mu|\}$, where $\mu$ the Lyapunov exponent of the steady velocity field $U_E$  (in our case $\mu=1$). 
  Thus, our $\lambda_\star$ is embedded in the essential spectrum for any $m\neq 0$, and in particular the velocity formulation in $L^2(\TT^2)$ corresponds to $m=-1$ in vorticity.  
 
 A long-standing line of research has addressed the problem of promoting linear instability to nonlinear instability. 
 In fluid mechanics, Friedlander, Strauss and Vishik \cite{friedlander1997nonlinear}, Grenier \cite{grenier2000nonlinear}, and Bardos, Guo and Strauss \cite{bardos2002stable}, among many others, established general criteria to infer nonlinear instability from linear instability. Unfortunately, these results require either additional regularity of the unstable eigenfunction \cite{friedlander1997nonlinear,grenier2000nonlinear} or the condition that the real part of the unstable eigenvalue exceed the Lyapunov exponent $\mu$ 
 \cite{bardos2002stable}, which equivalently requires that the eigenvalue is out of the essential spectrum in velocity formulation. 
On the other hand, Lin \cite{Lin04IMRS} proved the most general result that can actually be applied to our context, allowing us to deduce two important properties from Theorem~\ref{linthm}: first, an $L^2$ unstable eigenfunction automatically gains some regularity, with a nontrivial relation between the Lyapunov exponent and the real part of the unstable eigenvalue; second, linear instability implies nonlinear instability in \emph{velocity}. More precisely, as a direct consequence of \cite{Lin04IMRS} and Theorem~\ref{linthm}, we have the following.
\begin{corollary}\label{cor:lin}
The eigenfunction $\omega_\star\in L^2(\TT^2)$ of $\cL_E$ associated with the simple eigenvalue $\lambda_\star$ satisfies 
\begin{equation}\label{eqn:somereg}
    \omega_\star \in (W^{1,p}\cap L^q)(\TT^2)\qquad \mbox{for any }1\le p<1/(1-\Re(\lambda_\star))\mbox{ and }1\le q<\infty.
\end{equation}
Moreover, there exist constants $C_0, \beta_0, \delta_0>0$ such that for any $0<\delta<\delta_0$ the solution $\omega(t),u(t)$  of  \eqref{Euler2Dvorticity} with initial datum $\omega(0,\cdot)=\omega_E+\delta\,\Re(\omega_\star)$ satisfies
\begin{align}\label{eq:nonlininst}
\sup_{0<t<C_0|\ln(\delta)|}\|u(t)-u_{\rm in}\|_{L^{2}}\geq \beta_0.
\end{align}
\end{corollary}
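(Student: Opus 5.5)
The plan is to obtain Corollary~\ref{cor:lin} by verifying the hypotheses of Lin's theorems in \cite{Lin04IMRS}, using Theorem~\ref{linthm} as the only new spectral input. I will first recall the setting of \cite{Lin04IMRS}. For a steady state $\omega_E=F(\psi_E)$ of \eqref{Euler2Dvorticity} on $\TT^2$ whose linearization has an unstable eigenvalue $\lambda$ with $L^2$ eigenfunction, Lin proves two facts. First, the eigenfunction is more regular than $L^2$: it lies in $W^{1,p}$ for $p$ determined by the ratio between $\Re\lambda$ and the Lyapunov exponent $\mu$ of the steady flow, and it lies in every $L^q$. Second, linear instability implies nonlinear instability of the velocity in $L^2$ within a logarithmic time scale. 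Both results need a smooth (here analytic) steady state, an eigenvalue with $\Re\lambda>0$, and, for the nonlinear statement, that $\lambda$ have maximal real part among unstable eigenvalues.

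Step 1 checks the spectral hypotheses. By Theorem~\ref{linthm}, $\lambda_\star$ is a simple eigenvalue with $\Re\lambda_\star\in[0.1,0.17]$, so it is unstable. Moreover $\sigma(\cL_E)\setminus\im\RR$ is exactly $\{\pm\lambda_\star,\pm\bar\lambda_\star\}$, so $\Re\lambda_\star=\max\Re\,\sigma(\cL_E)$. The remaining spectrum lies on $\im\RR$ (essential spectrum, by \cite{shvidkoy2003essential,shvydkoy2005essential}), so it contributes only subexponential growth. This gives the spectral gap that Lin's bootstrap requires: the linearized semigroup on $L^2$ grows at most like $e^{(\Re\lambda_\star+\epsilon)t}$. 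Simplicity also makes $\omega_\star$ well defined up to scaling, and $\Re(\omega_\star)$ is a legitimate real datum because $\bar\lambda_\star$ carries the eigenfunction $\bar\omega_\star$.

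Step 2 proves \eqref{eqn:somereg}. I would follow Lin's argument. Write the resolvent identity $\omega_\star=\int_0^\infty e^{-\lambda_\star t}\,S(t)\big(\text{nonlocal part}\big)\,dt$, where $S(t)$ is transport by the flow map $\Phi_t$ of $U_E$. The nonlocal term $2\{\psi_E,\Delta^{-1}\omega_\star\}$ is smooth by elliptic regularity. Its derivatives grow under transport like $|\nabla\Phi_{-t}|\lesssim e^{\mu t}$ with $\mu=1$, and this growth is sharp near the hyperbolic stagnation points. The integral of $\|\nabla(f\circ\Phi_{-t})\|_{L^p}$ against $e^{-\Re\lambda_\star t}$ converges exactly when $p<1/(1-\Re\lambda_\star)$. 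The point is that the $L^p$ norm of the Jacobian growth is only an interpolated rate, since the exponential stretching is concentrated on a set of small measure. The $L^q$ bound comes out of the same representation, because composition with measure-preserving maps is an $L^q$ isometry.

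Step 3 gives \eqref{eq:nonlininst} directly from Lin's nonlinear instability theorem, with $u_{\rm in}$ denoting the steady velocity $U_E$. The constants $C_0$ and $\beta_0$ depend on $\Re\lambda_\star$ and on norms of $\omega_\star$, which are finite by Step 2.

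The main obstacle is confirming that Lin's hypotheses are stated in a form matching our operator: zero-average $L^2_0$, the domain ${\rm Dom}(\cL_E)$, and the maximality of the growth rate. I expect the rest to be direct citation.
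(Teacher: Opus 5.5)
Your proposal is correct and takes the same route as the paper, which states the corollary as a direct consequence of \cite{Lin04IMRS}, with Theorem~\ref{linthm} supplying the spectral input: a simple unstable eigenvalue of maximal real part, and Lyapunov exponent $\mu=1$ for $U_E$. One small slip in your Step~2 sketch is that the nonlocal term $2\{\psi_E,\Delta^{-1}\omega_\star\}$ is not smooth: it is initially only $H^1$, and $\omega_\star$ itself is not expected to be $H^1$. This is harmless, because the argument only needs $W^{1,q}$ for all $q<\infty$, and that follows once your $L^q$ bound is established.
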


\begin{figure}[htbp]
    \centering
    \begin{subfigure}[b]{0.48\textwidth}
        \centering
        \includegraphics[width=\textwidth]{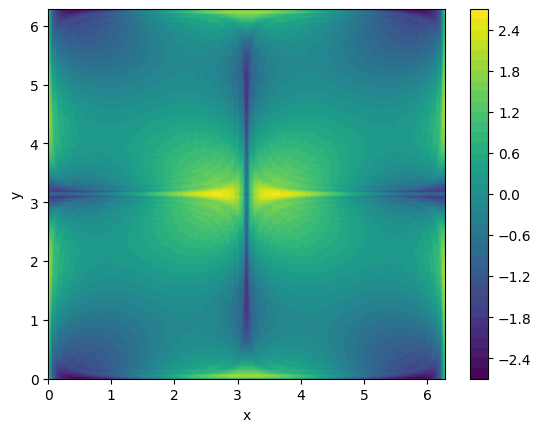}
        \label{fig:eigenfunction}
    \end{subfigure}
    \hfill 
    \begin{subfigure}[b]{0.48\textwidth}
        \centering
        \includegraphics[width=\textwidth]{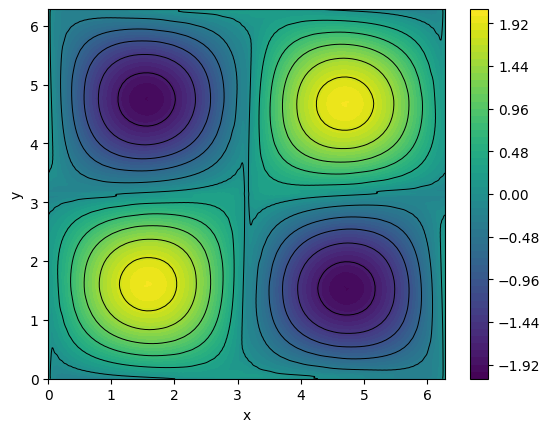}
        \label{fig:instability}
    \end{subfigure}

    \caption{On the left, real part of the (nonsmooth) eigenfunction $\omega_\star$, normalized so that $\| \omega_\star \|_{L^2 ([0, 2\pi]^2)} = \| \omega_E \|_{L^2 ([0, 2\pi]^2)} = 2\pi$. On the right, the real part of the perturbation $\omega_E + \delta \omega_\star$ with $\delta = 0.1$. 
    Such eigenfunction and perturbation break the original symmetries of the cellular flow (odd with respect to both variables). 
    }
    \label{fig:instability_parallel}
\end{figure}

The nonlinear instability in velocity already provides an interesting and physically relevant mechanism, measured here in terms of growth of the kinetic energy of the perturbation. Note that this result does not follow by a standard perturbative argument, because $\lambda_\star$ is embedded in the essential spectrum in velocity formulation. These problems are bypassed in \cite{Lin04IMRS} with a clever scheme that studies the vorticity and velocity formulation as a coupled system. At present, it is also the only mathematically achievable result for upgrading linear instability to nonlinear instability without sufficient regularity of the unstable eigenfunction, around which one would ideally run a perturbative bootstrap argument. In our case, the best regularity we can deduce from Corollary~\ref{cor:lin} yields $\omega_\star \in H^{\frac15}(\TT^2)$, using $\mathrm{Re}(\lambda_\ast) > \frac{1}{10}$ and standard Sobolev embeddings. Using the numerically obtained (but not rigorously justified) value of $\lambda_\star$ would give $s\approx 0.28$. In any case, this amount of regularity seems insufficient to directly tackle nonlinear instability in vorticity with current techniques, and the eigenfunction is not expected to be smooth. More precisely, we do not expect $\omega_\star$ to be $H^1$ at the hyperbolic point, see Figure~\ref{fig:instability_parallel} and Remark~\ref{rmk:nonsmooth} below.

{Finally, we observe that the spectral instability of $\omega_E$ also implies the existence of a {pair of conjugated} unstable eigenvalues for the operator 
\begin{align}
\label{def:Lv}
    \cL_\nu\coloneqq \cL_{E}+\nu\Delta,
\end{align}
when $\nu>0$ is sufficiently small. The operator above arises from the linearization of the 2D forced Navier-Stokes at $\omega_E$, where the force is precisely chosen so that $\omega_E$ is a steady state for the viscous problem (see Section \ref{sec:NS} for more details).  As a consequence of the instability for $\cL_E$ in Theorem \ref{linthm}, we have the following.
\begin{corollary}
\label{cor:NS}
    For $\nu>0$ sufficiently small the unstable spectrum of the operator $\cL_\nu$  in \eqref{def:Lv} contains a pair of eigenvalues
    $
    \{ \lambda_\nu, \bar\lambda_\nu\} \subseteq \sigma_{L^2}(\cL_\nu) \cap \{ \Re(z) > 0\}
    $,
    with $\lambda_\nu\to \lambda_\star$ as ${\nu\to 0^+}$. 
\end{corollary}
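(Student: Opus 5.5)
We treat $\cL_\nu$ as a perturbation of $\cL_E$ and follow the isolated eigenvalue $\lambda_\star$ given by Theorem~\ref{linthm}. The difficulty is that $\nu\Delta$ is a singular perturbation: it is unbounded relative to $\cL_E$. So standard Kato analytic perturbation theory does not apply directly. Instead we argue through a Riesz projection and resolvent convergence on a small contour, in the spirit of vanishing-viscosity spectral results for Euler.

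\textbf{Step 1: a contour.} Fix a small circle $\Gamma$ around $\lambda_\star$ inside $\{\Re z>0\}$. We require that $\Gamma$ encloses no other point of $\sigma(\cL_E)$. This is possible because, by Theorem~\ref{linthm}, $\lambda_\star$ is a simple isolated eigenvalue and the rest of the spectrum lies in $\im\RR\cup\{\bar\lambda_\star,-\lambda_\star,-\bar\lambda_\star\}$. Hence $\sup_{z\in\Gamma}\|(\cL_E-z)^{-1}\|_{L^2_0\to L^2_0}<\infty$.

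\textbf{Step 2: resolvent convergence on $\Gamma$.} We want to show that for small $\nu$ each $z\in\Gamma$ lies in the resolvent set of $\cL_\nu$, and that $(\cL_\nu-z)^{-1}f\to(\cL_E-z)^{-1}f$ strongly in $L^2$, uniformly in $z\in\Gamma$. The mechanism is the following:
\begin{itemize}
\item $\cL_E$ is a compact perturbation of the transport operator $-\{\psi_E,\cdot\}$, which is skew-adjoint, since $\Delta^{-1}$ is compact.
\item $\nu\Delta$ is dissipative.
\end{itemize}
Write $\cL_\nu-z=(T_\nu-z)(\uno+(T_\nu-z)^{-1}K)$, where $T_\nu=-\{\psi_E,\cdot\}+\nu\Delta$ and $K=-\{\psi_E,2\Delta^{-1}\cdot\}$ is compact. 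Since $\Re z>0$, we have $\|(T_\nu-z)^{-1}\|\le 1/\Re z$ uniformly in $\nu$. Moreover $(T_\nu-z)^{-1}\to(T_0-z)^{-1}$ strongly as $\nu\to0$, because $T_\nu\to T_0$ on the common core $C^\infty$ (Trotter--Kato). Composing with the compact operator $K$ upgrades strong convergence to norm convergence: $(T_\nu-z)^{-1}K\to(T_0-z)^{-1}K$ in operator norm. This convergence is uniform in $z\in\Gamma$ by equicontinuity in $z$.

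Since $\uno+(T_0-z)^{-1}K$ is invertible on $\Gamma$, a Neumann series argument shows that $\uno+(T_\nu-z)^{-1}K$ is invertible for small $\nu$, with uniform bounds.

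\textbf{Step 3: Riesz projections.} Define $P_\nu=-\frac{1}{2\pi i}\oint_\Gamma(\cL_\nu-z)^{-1}dz$. In the factorization, the operator $(\uno+(T_\nu-z)^{-1}K)^{-1}$ converges in norm while $(T_\nu-z)^{-1}$ converges only strongly. To extract finite-dimensionality, use the identity
\[
(\cL_\nu-z)^{-1}=(T_\nu-z)^{-1}-(\cL_\nu-z)^{-1}K(T_\nu-z)^{-1}.
\]
The first term is holomorphic inside $\Gamma$, because $\Re z>0$ and $T_\nu$ is dissipative, so its contour integral vanishes. The second term is compact and converges in norm. Therefore $P_\nu\to P_0$ in norm. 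Since $\operatorname{rank}P_0=1$, we get $\operatorname{rank}P_\nu=1$ for small $\nu$.

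It follows that $\cL_\nu$ has exactly one eigenvalue $\lambda_\nu$ inside $\Gamma$, and it is simple. Shrinking $\Gamma$ gives $\lambda_\nu\to\lambda_\star$.

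\textbf{Step 4: the conjugate eigenvalue.} Since $\cL_\nu$ is a real operator, it commutes with complex conjugation. Hence $\bar\lambda_\nu$ is also an eigenvalue. It is distinct from $\lambda_\nu$ because $\Im\lambda_\star\neq0$.

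\textbf{Main obstacle.} The delicate point is Step 2, namely the norm convergence of $(T_\nu-z)^{-1}K$ despite the singular perturbation $\nu\Delta$. We rely on the compactness of $\Delta^{-1}$, combined with the strong resolvent convergence that comes from the uniform dissipativity bound valid for $\Re z>0$.
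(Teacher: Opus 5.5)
Your proof is correct, but it takes a different route from the paper's. The paper restricts to $(\mathrm{Ev}_x\mathrm{Ev}_y)^{[\mathrm{odd}]}_+$. There $\Delta$ is diagonal in the basis $E_{j,k}$ and so commutes with $\cH_s,\cH_u$. This lets the rank-one reduction of Theorem~\ref{thm:instability_criterion} go through with $\cT$ replaced by the dissipative $\cT_\nu=\cT+\nu\Delta$. Unstable eigenvalues of $\cL_\nu$ in that subspace are then exactly the zeros of the scalar function $\Phi_\nu(\lambda)=1-\frac{8}{3\|E_u\|^2}\langle(\cT_\nu-\lambda)^{-1}\cT E_u,E_u\rangle$. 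The paper then needs only the weak convergence of the single vector $(\cT_\nu-\lambda)^{-1}\cT E_u$ (Banach--Alaoglu plus identification of the limit against test functions) to get $\Phi_\nu\to\Phi$ pointwise. Montel and Hurwitz then carry the simple zero from Lemma~\ref{lemma:holo_root} to nearby zeros $\lambda_\nu$.

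You instead work in all of $L^2_0$ and use only that $\cL_E$ is a compact perturbation $K=2\cJ\Delta^{-1}$ of the skew-adjoint $\cJ$. You combine strong resolvent convergence on a core, norm convergence after composing with $K$, and a Riesz-projection argument. This is essentially the resolvent-convergence alternative the introduction mentions (as in \cite{albritton2022non,shvydkoy2008unstable}). Your route is more general: it needs no invariant subspace or commutation, and it counts algebraic multiplicity through $\mathrm{rank}\,P_\nu$. The paper's route is lighter: it reduces everything to weak convergence of one vector plus scalar complex analysis, reusing the criterion.

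One step in Step~3 is not justified as written. In $(\cL_\nu-z)^{-1}K(T_\nu-z)^{-1}$, the compact factor $(\cL_E-z)^{-1}K$ sits to the \emph{left} of the merely strongly convergent $(T_\nu-z)^{-1}$. On that side, compact times strongly convergent does not give norm convergence in general. It does hold here, because the adjoint resolvents $(T_\nu^*-\bar z)^{-1}$ also converge strongly: $T_\nu^*=-\cJ+\nu\Delta$ is dissipative of the same form, so the same argument applies.

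Alternatively, for the corollary you only need $P_\nu\neq0$ for small $\nu$. This already follows from the strong convergence $P_\nu\to P_0\neq0$, which is immediate from $(\cL_\nu-z)^{-1}=(\uno+(T_\nu-z)^{-1}K)^{-1}(T_\nu-z)^{-1}$. Moreover $P_\nu$ is compact, hence of finite rank, by your resolvent identity. So only $P_0\neq0$ is used, and the precise sense of ``simple'' in Theorem~\ref{linthm} plays no role.
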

To prove this result, one can look at $\cL_{\nu}$ as a singular perturbation of $\cL_E$ since the unstable eigenvalues of $\cL_E$ are  isolated.
Then, arguments involving precise convergence of the resolvents analogous to \cite{albritton2022non} or \cite{shvydkoy2008unstable} could be applied in our case. However, in Section \ref{sec:NS} we will prove Corollary \ref{cor:NS} by adapting our instability criterion 
in the next section. For this argument it will be enough to exploit some weak convergence properties of the resolvent of the operator involved. Finally, combining Corollary \ref{cor:NS} with the results in \cite{friedlander2006nonlinear} one can deduce the nonlinear instability (in velocity) for the Taylor-Green vortex in the 2D forced Navier-Stokes equations.}

\subsection{A new instability criterion for linear Hamiltonian operators} A key novelty of this paper lies in the method of proof for Theorem \ref{linthm}. It is based on a new, general criterion for detecting unstable eigenvalues of \emph{Hamiltonian operators} (as defined below),  whose application to the Taylor-Green vortex yields the result stated in Theorem \ref{linthm}. We expect the criterion to apply to many stationary solutions of Euler, for instance we describe in Section~\ref{sec:nmTG} how we can detect with it many different instabilities of the generalized $(m,n)$ Taylor--Green vortex. 
\begin{definition}[Hamiltonian Operator]
\label{def:Hamiltonian}
Let ${\rm X}$ be a
Hilbert space.
We say that an operator $\cL = \cJ \cH$ densely defined on ${\rm X}$ is \emph{Hamiltonian} 
if the following hold:
\begin{enumerate}[label=\roman*)]
    \item the operator $\cJ : D(\cJ)\subset {\rm X}\to {\rm X}$ is densely defined, closed, and skew-adjoint;
    \item the operator $\cH : {\rm X}\to {\rm X}$ is bounded and self-adjoint.
\end{enumerate}
\end{definition}
For the Taylor-Green vortex studied in this paper, the linear operator $\cL_E$ in \eqref{cL} decomposes into
\begin{equation}\label{cJcH}
    \cL_E=\cJ\cH\,,\quad \text{with}\quad \cJ:=-\{\psi_E,\cdot\}\,,\quad \text{and}\quad \cH:=\uno+2\Delta^{-1}
\end{equation}
and is therefore Hamiltonian 
in the sense of Definition \ref{def:Hamiltonian}. Our general instability criterion then reads as follows.

\begin{theorem}
\label{thm:instability_criterion}
Consider a Hamiltonian operator $\cL=\cJ\cH$ as in Definition \ref{def:Hamiltonian}. Assume that $\cH$ admits a decomposition $\cH = \cH_s + \cH_u$ satisfying the following properties:
\begin{enumerate}[label=\arabic*)]
    \item $\cH_s:{\rm X}\to {\rm X}$ is self-adjoint and $\inf\{\langle \cH_s f,f\rangle_{\rm X} \, :\, \|f\|_{\rm X}=1\}>0$.
    \item $\cH_u : {\rm X}\to {\rm V}_u$ is
    of
    finite rank, with $n \coloneqq \mathrm{dim}({\rm V}_u) < +\infty$.
    \item $\cH_s$ and $\cH_u$ commute.
\end{enumerate}
Define the skew-adjoint operator $\cT := \cH_s^{1/2} \cJ \cH_s^{1/2}$.  For every $\lambda \in \CC\setminus \im\RR$, let $\cM_\lambda:{\rm V}_u \to {\rm V}_u$ be the finite-dimensional operator and $\Phi:\CC\setminus \im\RR \to \CC$ be the holomorphic function respectively  given by
\begin{equation}
\label{def:MPhi}
\cM_\lambda \coloneqq  \mathrm{Id} + \cH_u \cH_s^{-1} (\cT - \lambda)^{-1} \cT \, ,\qquad  \Phi(\lambda)\coloneqq \det(\cM_\lambda)\, .
\end{equation}
Then, 
$\mathrm{Ker}(\cL - \lambda)$ and $\mathrm{Ker}(\cM_\lambda)$ are isomorphic. In particular,
\begin{align}
    \lambda \in \sigma(\cL)\setminus \im\RR \quad \text{ if and only if } \quad  \Phi(\lambda)=0.
\end{align}
\end{theorem}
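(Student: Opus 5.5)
The plan is to conjugate $\cL$ by $\cH_s^{1/2}$ so that the eigenvalue problem $\cL f=\lambda f$ becomes a finite-rank perturbation of the skew-adjoint operator $\cT$. Then a Birman--Schwinger type reduction turns it into a problem on ${\rm V}_u$.

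First the preliminaries. By 1), $\cH_s$ is bounded, self-adjoint and coercive, so $\cH_s^{\pm1/2}$ are bounded and self-adjoint. Since $\cH_u = \cH - \cH_s$ is self-adjoint and commutes with $\cH_s$, it also commutes with $\cH_s^{\pm 1/2}$ (functional calculus). It follows that $\cH_s^{\pm1/2}$ map ${\rm V}_u=\Rn(\cH_u)$ into itself, so $\cM_\lambda$ indeed maps ${\rm V}_u$ to ${\rm V}_u$. The operator $\cT$ is skew-adjoint on $D(\cT)=\{g:\cH_s^{1/2}g\in D(\cJ)\}$, because $\cH_s^{1/2}$ is a bounded bijection. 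Hence $(\cT-\lambda)^{-1}$ is bounded for $\lambda\notin\im\RR$, and $(\cT-\lambda)^{-1}\cT=\uno+\lambda(\cT-\lambda)^{-1}$ is bounded too. Since $\cM_\lambda$ depends holomorphically on $\lambda$ on a finite-dimensional space, $\Phi$ is holomorphic.

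Next, the reduction. Suppose $\cJ\cH f=\lambda f$ with $\lambda\notin\im\RR$, and set $g:=\cH_s^{1/2}f$. Since $\cH=\cH_s^{1/2}(\uno+\cH_s^{-1}\cH_u)\cH_s^{1/2}$ by commutativity, we have $\cH f=\cH_s^{1/2}(g+\cH_s^{-1/2}\cH_u f)$. Applying $\cH_s^{1/2}$ to the eigen-equation gives
\begin{equation*}
\cT\big(g+\cH_s^{-1/2}\cH_u f\big)=\lambda g .
\end{equation*}
Put $h:=g+\cH_s^{-1/2}\cH_u f$. Then $\cT h - \lambda h = -\lambda \cH_s^{-1/2}\cH_u f$, so $h=-\lambda(\cT-\lambda)^{-1}\cH_s^{-1/2}\cH_u f$. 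Here one should check that the formal manipulation is legitimate, i.e.\ that $h\in D(\cT)$; this is exactly the statement $\cH f\in D(\cJ)$, which is part of the definition of an eigenfunction. We also get $g=\lambda^{-1}\cT h$. Define $v:=\cH_u f\in{\rm V}_u$ and apply $\cH_u\cH_s^{-1/2}$ to the identity $g=h-\cH_s^{-1/2}v$, using $\cH_u\cH_s^{-1/2}g=\cH_u f=v$. This gives
\begin{equation*}
v=\cH_u\cH_s^{-1/2}h-\cH_u\cH_s^{-1}v=-\lambda\,\cH_u\cH_s^{-1/2}(\cT-\lambda)^{-1}\cH_s^{-1/2}v-\cH_u\cH_s^{-1}v .
\end{equation*}
It remains to see that this is $\cM_\lambda v=0$. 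This is where the precise form of $\cM_\lambda$ must be matched, and I expect the bookkeeping to be the main obstacle. The idea is to use $\lambda(\cT-\lambda)^{-1}=(\cT-\lambda)^{-1}\cT-\uno$ together with the commutation of $\cH_u$ with $\cH_s^{\pm1/2}$. After these substitutions, the relation should rearrange into $v+\cH_u\cH_s^{-1}(\cT-\lambda)^{-1}\cT w=0$ for the appropriate representative. If the powers of $\cH_s$ do not line up exactly, I would instead work with $\tilde v:=\cH_s^{1/2}v$ (or $\cH_s^{-1/2}v$), which is again in ${\rm V}_u$ and whose choice is a bijection of ${\rm V}_u$. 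I would choose the normalization so that the equation reads literally $\cM_\lambda\tilde v=0$.

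Finally, the isomorphism. The map $f\mapsto\tilde v$ is linear from $\Ker(\cL-\lambda)$ to $\Ker(\cM_\lambda)$. It is injective: if $v=0$, then $h=0$ and so $g=0$, hence $f=0$. For surjectivity, given $v\in\Ker\cM_\lambda$, I would define $h$ by the resolvent formula above, set $g:=\lambda^{-1}\cT h$ and $f:=\cH_s^{-1/2}g$. Then $\cM_\lambda v=0$ forces $\cH_u f=v$, and reversing the computation shows $\cL f=\lambda f$ with $\cH f\in D(\cJ)$. Here one again needs care that $h\in D(\cT)$, which holds since $h$ lies in the range of the resolvent. The equivalence $\lambda\in\sigma(\cL)\setminus\im\RR\iff\Phi(\lambda)=0$ then needs one more ingredient: spectrum off the axis consists only of eigenvalues. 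For this I would show that $\cL-\lambda$ is invertible whenever $\cM_\lambda$ is, by solving $(\cL-\lambda)f=r$ through the same reduction with a source term. The result is an explicit resolvent formula, namely $(\cT-\lambda)^{-1}$ plus a finite-rank correction involving $\cM_\lambda^{-1}$.
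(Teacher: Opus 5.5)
Your proof is correct and follows the paper's route: conjugate by $\cH_s^{1/2}$, invert $\cT-\lambda$, and project back onto ${\rm V}_u$ with $\cH_u\cH_s^{-1/2}$. The bookkeeping you left open closes with $\tilde v:=\cH_s^{-1/2}v=\cH_u\cH_s^{-1/2}f$, which is exactly the paper's $z$, and not with $\cH_s^{1/2}v$. Indeed, applying $\cH_s^{-1/2}$ to your last identity and using $\uno+\lambda(\cT-\lambda)^{-1}=(\cT-\lambda)^{-1}\cT$ gives $\tilde v+\cH_u\cH_s^{-1}(\cT-\lambda)^{-1}\cT\tilde v=0$, i.e.\ literally $\cM_\lambda\tilde v=0$.

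Two of your steps are small improvements on the paper. Your variable $h=\cH_s^{-1/2}\cH f\in D(\cT)$ avoids an issue the paper does not address: the paper manipulates $\cJ\cH_u f$ directly, which tacitly assumes ${\rm V}_u\subset D(\cJ)$. Your resolvent argument also fills an omission: the paper deduces the statement about $\sigma(\cL)\setminus\im\RR$ from the kernel isomorphism alone, and that isomorphism by itself only covers eigenvalues.
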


\begin{remark}

In the case $n = 1$, $\Phi$ takes a more explicit form. Indeed, let ${\rm V}_u={\rm span}\{E_u\}$ and  $h_u\in \RR$, $s_u > 0$ be such that $\cH_u(E_u)=-h_uE_u$, $\cH_s(E_u)=s_uE_u$. Then,  we have
\begin{equation} \label{eq:Phi_def}
     \Phi (\lambda) 
     =  1 - \frac{h_u}{s_u\|E_u\|_{\rm X}^2} \langle (\cT - \lambda)^{-1} \cT E_u, E_u \rangle_{\rm X},
\end{equation}
see \eqref{pf:n1Phi} for a full proof. 
\end{remark}

 {Let us comment on Theorem \ref{thm:instability_criterion}. This result is not perturbative in nature, in the sense that we are not relying on any smallness parameter to follow the onset of spectral instability.}
The relation between negative directions of the self-adjoint operator $\cH$ and unstable directions of the Hamiltonian operator $\cL = \cJ\cH$ has been extensively studied in the literature, both in the wave community \cite{GSS1,KapitulaPromislow2013,PegoWeinstein1992}  and in the fluid community \cite{FriedlanderVishik1991,FriedlanderVishik1992,friedlander1997nonlinear,Lin04IMRS,LinCMP2004}. While it is straightforward that non-definiteness of $\cH$ is a necessary condition for the onset of unstable eigenvalues of $\cL$ (see Lemma~\ref{lemma:uniqueness_algebra}), a general and sharp sufficient condition for instability  is much less transparent. In  \cite{LinZengMemoirs}, Lin and Zeng, building on the Pontryagin--Krein space theory \cite{Pontryagin1944,AzizovIokhvidov1989,GohbergLancasterRodman2005}, give an abstract comprehensive result for  Hamiltonian operators $\cL=\cJ\cH$, where the self-adjoint operator $\cH$ has finitely-many negative directions. They present a sevenfold decomposition of the domain such that  $\cL$ is represented as a block upper-triangular matrix of operators. Moreover, all the infinite-dimensional diagonal blocks are skew-symmetric, thereby reducing in principle the search for unstable eigenvalues to a finite-dimensional problem. 
They also provide a Galerkin-type construction that isolates a finite-dimensional invariant subspace in which the unstable dynamics (if any) is confined.
However, the application of their method would require to compute the spectrum of matrices of  larger and larger dimension (the Galerkin truncation of the operator) until a good approximate Pontryagin subspace is somehow reached. 
On the contrary, Theorem \ref{thm:instability_criterion} requires to work  on an a priori fixed and completely explicit space ${\rm V}_u$ whose finite dimension $n$ is exactly given by the number of negative directions of the operator $\cH$ (in the sequel we will always apply it with $n=1$ or $2$).

Under additional structural assumptions, Lin and Zeng's result \cite{LinZengMemoirs}  {recovers} instability criteria of \cite{lin2003instability,LinCMP2004}, and \cite[Lemma 3.1]{liao2023stability} and can be applied to a variety of contexts  \cite[Section 11]{LinZengMemoirs}, including instability results for solitary waves in modulational models and periodic traveling waves. However, in fluids these criteria are specifically designed to detect, under suitable assumptions, \emph{purely real eigenvalues}. This is a  decisive obstruction,  making them \emph{inapplicable} to the Taylor–Green vortex in \eqref{TaylorGreen11}. 

On the contrary, Theorem \ref{thm:instability_criterion} is a finite dimensional criterion and gives a simple and concrete characterization of the unstable eigenvalues as zeros of a holomorphic function. Therefore, it can be used both to rule out instability, and to actually find it, as we will do for the Taylor--Green vortex in different invariant spaces.  Our proof is self-contained and independent from the previously cited results.

On the other hand, it is interesting to observe that the proof of Theorem \ref{thm:instability_criterion} can be adapted to encompass, as a limiting case, the Latushkin-Vasudevan reformulation \cite{latushkin2018eigenvalues} of \cite[Theorem 1.2]{LinCMP2004}; see Remark \ref{rmk:backtoLin}.

In a nutshell, the splitting $\cH = \cH_s + \cH_u$ isolates a finite-dimensional space ${\rm V}_u = \mathrm{Rn} \,\cH_u$ containing all the possible negative directions of $\cH$. Then, for any eigenvalue $\lambda \in \CC \setminus \im \RR$, the kernel of the finite dimensional matrix $\cM_\lambda :{\rm V}_u\to {\rm V}_u $  is a lossless encoding of the negative parts of the eigenvectors associated with $\lambda$. The isomorphism from $\ker(\cM_\lambda)$ to the eigenspace $\Ker(\cL-\lambda)$, that we will define in \eqref{eq:isomorphism}, is exactly the operation of reconstructing the unstable directions, if any, from their negative projections. The resulting function $\Phi(\lambda) = \det(\cM_\lambda)$ is not canonical, since it depends on the choice of the splitting $\cH = \cH_s + \cH_u$. Theorem \ref{thm:instability_criterion} shows that, for any such admissible splitting, $\Phi$ yields a concrete analytic device whose zeros give a complete description of the spectrum of $\cL$ off the imaginary axis and of the associated multiplicities.

\subsection{Application of the instability criterion to the Taylor--Green vortex}
In order to apply this general criterion to our concrete operator $\cL_E$ in  \eqref{cL}, interpreted as \eqref{cJcH}, we first observe that the self adjoint operator $\uno+2\Delta^{-1}$ is a Fourier multiplier with four negative directions and four elements in the kernel. This implies (see Lemma \ref{lemma:uniqueness_algebra}) the existence of at most eight eigenvalues outside the imaginary axis.
To show that {only} four of them actually exist, we first {split the space in some} invariant subspaces of $\cL_E$ 
\begin{equation}\label{EvOddsplitting-intro}
L^2_0(\TT^2) =  \mathrm{Odd}_x\mathrm{Odd}_y  \oplus   \mathrm{Odd}_x\mathrm{Ev}_y \oplus
  \mathrm{Ev}_x\mathrm{Odd}_y \oplus  \mathrm{Ev}_x\mathrm{Ev}_y
\,,
\end{equation}
defined according to the parity with respect to the $x$ and $y$ variable separately. We can further split each of these invariant spaces in two, based on the fact that the Fourier decomposition of an element in each space, say $ \mathrm{Odd}_x\mathrm{Odd}_y $, is a sum of trigonometric monomials, multiples of $\sin(kx)\sin(jy)$, with $j+k$ even (resp. odd; see the precise definition in \eqref{harmonicsplitting} below). This gives rise to the decomposition in invariant subspaces  $\mathrm{Odd}_x \mathrm{Even}_y = (\mathrm{Odd}_x \mathrm{Even}_y )^{[\mathrm{ev}]} \oplus (\mathrm{Odd}_x \mathrm{Even}_y )^{[\mathrm{odd}]}\,,$ and analogously for the other subspaces in \eqref{EvOddsplitting-intro}.
We then observe that in some of these eight invariant subspaces, such as $ \mathrm{Odd}_x\mathrm{Odd}_y $ and $(\mathrm{Odd}_x \mathrm{Even}_y )^{[\mathrm{ev}]}$, $\cH$ has no negative directions; hence $\cL_E$ has no unstable eigenvectors therein.
In $(\mathrm{Odd}_x \mathrm{Even}_y )^{[\mathrm{odd}]}$ there can be only one {real unstable} eigenvalue. 
To rule out this possibility we show that $\Phi$ is monotonically increasing on the real positive semiaxis and we estimate its limit at $0$ from below with a positive constant (see Proposition~\ref{prop:OddEvstability}).
The latter estimate is enabled by Lemma \ref{lem:strong_convergence},  characterizing nice properties of the resolvent of a skew-adjoint operator. As a byproduct, said limit at $0$  involves 
the transport operator and the projection given by averaging over streamlines.
{Interestingly, if one considers a suitable rescaling of the Taylor--Green vortex, (see Section \ref{sec:nmTG} about the generalization of our result to $(m,n)$ Taylor--Green vortices), such instabilities corresponding to a real eigenvalue  and can be captured with Theorem~\ref{thm:instability_criterion}.}
 Finally, we consider the  $(\mathrm{Even}_x \mathrm{Even}_y )^{[\mathrm{odd}]}$ class, where $\cH$ has two negative directions. We introduce a further splitting into two $\cL_E$-invariant complex subspaces $ (\mathrm{Ev}_x \mathrm{Ev}_y)^{[\mathrm{odd}]}=(\mathrm{Ev}_x \mathrm{Ev}_y)^{[\mathrm{odd}]}_+ \oplus (\mathrm{Ev}_x \mathrm{Ev}_y)^{[\mathrm{odd}]}_- $ and we reduce the spectral problem to finding \emph{one} unstable eigenvalue in \emph{one} of these spaces. In particular, in the $(\mathrm{Ev}_x \mathrm{Ev}_y)^{[\mathrm{odd}]}_+$ class, we are able to localize the possible onset of an eigenvalue $\lambda_\star$, with $\Re \lambda_\star >0$, within the semicircle $|z+\frac\im2 | \leq \frac3{10} \sqrt{\frac56} \approx 0.27$. We then prove that such eigenvalue appears in the region $[0.10, 0.17] - \im [0.57, 0.63]$. This is verified with a computer assisted argument, showing that the holomorphic function $\Phi$ must have a zero in such region.

\medskip

\subsection{Related literature} \label{subsec:intro_literature}

The study of the spectral stability properties of  steady states {of the Euler equations} is a foundational topic within hydrodynamic stability theory \cite{drazin2004hydrodynamic}. Since the first experimental account of turbulence provided by Reynolds in 1883, many of the earlier investigations focused on understanding spectral \emph{instability} in parallel shear flows \cite{Rayleigh80,Tollmien1935,drazin2004hydrodynamic}. These instabilities have now been mathematically justified and understood in various settings, e.g. \cite{lin2003instability,Grenier16Duke,CDMV}. Moreover, unstable radially symmetric vortices (intimately related to shear flows) have been used to provide examples of forced nonuniqueness \cite{vishik2018instability,albritton2022non}. 

To study stability properties of more general steady states, in 1965 Arnold unveiled the Hamiltonian structure of the 2D Euler equations in his seminal paper \cite{Arnold65}, see also \cite{arnold2009topological}. He fully exploited this structure by deriving one of the first general nonlinear Lyapunov stability result for ideal fluids. In particular, the Hamiltonian function is the kinetic energy, and the motion is ``constrained" by the conservation of all Casimirs of vorticity. For steady states of the form $\omega_E=G(\psi_E)$, Arnold applied an elegant variational argument to derive a quadratic form\footnote{This quadratic form, always conserved by the linearized evolution, is defined through the operator $1/G'(\psi_E)+ \Delta^{-1}$. This is indeed proportional to our $\mathcal{H}$ for the Taylor-Green vortex.} whose fixed sign implies the stability of the steady state. The consequences of this result include the symmetry of the spectrum with respect to both the real and imaginary axes, the nonlinear stability of a class of shear flows\footnote{$U_E=(U(y),0)$ with $U''$ having a fixed sign in the domain.}, and a  stability criterion for steady states in general domains. Arnold's viewpoint has since been extended to many other conservative systems arising in the physics of fluids and plasmas \cite{holm1985nonlinear}, and has proven useful even in a relevant case involving viscosity 
 \cite{gallay2024arnold}.
{More broadly, the Hamiltonian framework has proved highly effective for the analysis of conservative dynamics, as witnessed by a wide range of results in both linear and nonlinear settings \cite{GSS1,GSS90,KapitulaPromislow2013,LinZengMemoirs,BBHM18}. From a spectral perspective, Hamiltonian operators naturally allow for genuinely non-normal spectral features that drive instability, such as Krein collisions, Jordan chains, and eigenvalues bifurcating off the imaginary axis. A prime example of the onset of these phenomena is famously provided by the spectrum of Stokes waves (travelling solutions of the system of water waves), whose modulational instability and related spectral scenarios have been progressively characterized; see \cite{BM95,NS23,BMV22,BMV23,BMV24,BCMV24}. }

We note that in recent years there has been growing interest in quantitative stability properties of stationary solutions, such as \emph{inviscid damping} of perturbations; see \cite{bedrossian2015inviscid,wei2019linear,bedrossian2019vortex,ionescu2023nonlinear,masmoudi2024nonlinear,lin2019metastability,grenier2020linear}. For the linearized problem, this mechanism corresponds to the weak convergence of the vorticity toward its average along the streamlines of the background flow (i.e.\ the projection onto $\Ker(\cJ)$), with precise polynomial decay rates for the velocity field. This property has so far been established only for shear flows or vortices, and is closely related to the \emph{absence} of embedded eigenvalues in the essential spectrum of the linearized Euler operator. Indeed, a weaker form of inviscid damping holds in fairly general settings under {further} assumptions \cite[Theorem~11.7]{LinZengMemoirs}. For concave shear flows and strictly monotone vortices, embedded eigenvalues can be excluded \cite{bedrossian2019vortex,wei2019linear}, whereas in more general situations this must be verified case by case \cite{lin2019metastability,grenier2020linear} or imposed as an assumption \cite{wei2019linear,ionescu2023nonlinear,masmoudi2024nonlinear}. In fact, characterizing embedded eigenvalues is a notoriously difficult problem; for Schr\"odinger-type operators, Mourre theory provides powerful tools to address this issue \cite{amrein1996commutator}, a connection exploited for certain shear flows in \cite{grenier2020linear}. By contrast, the theory for more general Hamiltonian systems (where $\cL$ is not skew-adjoint) is far less developed and, even in perturbative regimes, requires delicate analysis, see for instance \cite{hadzic2024absence}.

Finally, our proof uses computer-assistance in order to check that the function $\Phi(\lambda )$ vanishes at $\lambda = \lambda_\star$, concluding the instability part of Theorem \ref{linthm}. The code is written in Sage (\textit{SageMath 10-7}), and can be found in the supplementary material of the arXiv version of this paper ``TaylorGreen\_CAP.ipynb''. We refer to \cite{Tucker} as a general reference in interval arithmetics, to \cite{GomezSerrano} for a recent survey focused on PDE, and in particular to \cite[Section 4]{GomezSerrano} for the application of interval arithmetics to show the existence of unstable eigenvalues for linear operators arising in the analysis of fluids.

Interval arithmetics, going back to the pioneering work of R. E. Moore \cite{Moore} consists on the idea of replacing real numbers by intervals in order to prove rigorous statements with the computer. For example, in order to compute the sum of two real intervals $a^I = [a^L, a^R]$, $b^I = [b^L, b^R]$, one can implement the sum using that
$a^I + b^I \subset [(a^L + b^L)_-, (a^R + b^R)_+]$, where $a_\pm$ are the operators rounding down/up to the nearest representable number from below/above. Although this inclusion is not an equality, the containment property suffices to ensure rigor: to show $a+b \neq 0$ for all $a\in a^I$, $b\in b^I$, it suffices to show that $0 \notin [(a^L + b^L)_-, (a^R + b^R)_+]$. One can perform much more complicated operations with interval arithmetics, such as integrals, which is classical since the work of Moore \cite{Moore}.

Early applications of interval arithmetics to PDE stability problems were related to the verification  of certain numerical inequalities, as in the relativistic stability of matter by De La Llave -- Fefferman \cite{DeLaLlaveFefferman}. However, the recent increase in computational power has allowed to treat much more computationally expensive problems. Stability has been studied with computer-assistance via Evans functions \cite{BarkerZumbrun}, by combining approximate inversion with fixed point theorems \cite{Cadiot, WPN}, or reducing stability to properties of explicit solutions \cite{BCG}. In the last decade, an approach has gained significant attention: loosely speaking, we decompose a linear operator $\cL$ into a finite-rank part, analyzed with the computer, and a remainder with controlled influence on the spectrum. This has been employed to construct solutions for the SQG equation via Crandall-Rabinowitz \cite{CastroCordobaGomezS}, or to prove singularities for the 3D Euler equations with boundary \cite{ChenHou1, ChenHou2}. We understand our method as a new way of performing this splitting, which is particularly suited for Hamiltonian systems. A remarkable difference with previous iterations of this idea is that our method reduces the problem to study a single Evans function $\Phi(\lambda)$ defined in \eqref{def:MPhi}, significantly simplifying the problem.

We also recall that our unstable mode at $\lambda = \lambda_\star$ is expected to be of low regularity (see Remark~\ref{rmk:nonsmooth}). Finding low regularity solutions to PDE has traditionally been out of reach for computer-assisted methods due to the difficulty of obtaining accurate representations. Recent methods to address this problem rely on using a Clausen functions basis, which is adapted to the low regularity of the solution \cite{EncisoGomezSVergara, DahneGomezSerrano, Dahne}. In contrast, our stability criterion allows us to work directly in a Fourier basis at low regularity, since accurate representations of the solution are no longer required.

\section{Stability in linear Hamiltonian operators} \label{sec:hamiltonian}

In this section, we analyze the stability properties of a linear Hamiltonian operator $\cL= \cJ \cH$, with $\cJ$ and $\cH$ as in Definition \ref{def:Hamiltonian}.
We first present some preliminary results relating the dimension of the  \textit{negative directions} of $\cH$ to the \textit{unstable directions} of $\cL$. These type of results are well established in the literature in rather general cases, e.g. \cite{KapitulaPromislow2013, LinZengMemoirs}, but here we provide short and self-contained proofs that suffice for our purposes. These properties will be used later in Section \ref{sec:stability} to characterize the stability of the Taylor-Green vortex under perturbations within a certain symmetry class.

Then, we present the proof of Theorem \ref{thm:instability_criterion}. Finally, we study useful properties of the \textit{reduced stability matrix} $\cM_\lambda$ and the \textit{stability function} $\Phi(\lambda)$ defined in \eqref{def:MPhi}. These properties are our building blocks in the proof of the stability with respect to even-odd or odd-even perturbations of the Taylor-Green vortex.

\subsection{Negative directions and instability}
For a linear Hamiltonian operator, it is straightforward to see that the quadratic form $q_\cH:f\mapsto\langle f,\cH f\rangle_{\rm X}$ is a conserved quantity of the dynamics. In particular,
if $\cH$ is coercive then $q_\cH$ is a Lyapunov functional for the linear dynamic, and $\cL$ has no unstable eigenvalues. Only the presence of both {\it positive} and \textit{negative }directions of $\cH$ can potentially generate unstable modes for the whole $\cL=\cJ\cH$. 
Moreover, in rather general cases (cfr.\ \cite[Theorem 2.2]{LinZengMemoirs}), the number of unstable directions of $\cL$ is bounded by the number of negative directions of $\cH$.   
In our functional setting this is not hard to prove as shown below.
\begin{lemma} 
\label{lemma:uniqueness_algebra}
    Let $\cL = \cJ \cH$ be a Hamiltonian  such that
    \begin{enumerate}[label=\arabic*)]
        \item $\cH$ 
        has exactly $k$ negative eigenvalues (counted with multiplicity);
        \item  $\cL$ has $m$ linearly independent unstable eigenfunctions.
    \end{enumerate}
    Then  any unstable eigenvector $f$ of $\cL$ must satisfy $\langle \cH f, f \rangle_{\rm X} = 0$ and one has 
    $ m \leq k$.
    In particular, if $\cH$ has no negative eigenvalues, then $\sigma(\cL) \subseteq \im \mathbb R$. 
\end{lemma}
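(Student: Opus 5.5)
First I show that $\langle \cH f,f\rangle_{\rm X}=0$ for every unstable eigenvector. Let $\cL f=\lambda f$ with $\Re\lambda\neq0$, so that $\cH f\in D(\cJ)$ and $\cJ\cH f=\lambda f$. Pairing with $\cH f$ and using skew-adjointness of $\cJ$ gives
\[
\lambda\langle f,\cH f\rangle_{\rm X}=\langle \cJ\cH f,\cH f\rangle_{\rm X}\in \im\RR .
\]
Self-adjointness of $\cH$ gives $\langle f,\cH f\rangle_{\rm X}\in\RR$. If $\langle f,\cH f\rangle_{\rm X}\neq0$, then $\lambda\in\im\RR$, which is a contradiction. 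Hence $\langle \cH f,f\rangle_{\rm X}=0$.

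Next I upgrade this to the whole unstable eigenspace. It is not true that $\langle\cH g,g\rangle_{\rm X}=0$ for every $g$ in the span of the unstable eigenvectors. I therefore split by the sign of the real part. Let $E_+$ (resp. $E_-$) be the span of the unstable eigenvectors with $\Re\lambda>0$ (resp. $<0$). For eigenvectors $f_1,f_2$ with eigenvalues $\lambda_1,\lambda_2$,
\[
\lambda_1\langle f_1,\cH f_2\rangle_{\rm X}=\langle \cJ\cH f_1,\cH f_2\rangle_{\rm X}=-\langle \cH f_1,\cJ\cH f_2\rangle_{\rm X}=-\bar\lambda_2\langle f_1,\cH f_2\rangle_{\rm X},
\]
so $(\lambda_1+\bar\lambda_2)\langle f_1,\cH f_2\rangle_{\rm X}=0$. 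If both real parts are positive, then $\lambda_1+\bar\lambda_2\neq0$, so $\langle \cH f_1,f_2\rangle_{\rm X}=0$. By sesquilinearity the form $q_\cH$ vanishes identically on $E_+$, and likewise on $E_-$. Thus $E_\pm$ are $\cH$-isotropic subspaces.

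The dimension count goes as follows. Let $N$ be the spectral subspace of $\cH$ for $(-\infty,0)$, so $\dim N=k$, and let $P=(\cH|_{N^\perp})$-related spectral projection onto $N^\perp$. On $N^\perp$, $\cH\ge0$. The step I expect to be the obstacle is that $\cH$ may have a kernel, so isotropy does not immediately force zero. The fix is to use that no eigenvector with $\lambda\neq0$ lies in $\Ker\cH$, since there $\cL f=0$. Take $g\in E_+$ with $g\in N^\perp$. Then $0=\langle \cH g,g\rangle_{\rm X}$ with $\cH\ge0$ on $N^\perp$ gives $\cH g=0$, because for a nonnegative self-adjoint operator $\langle \cH g,g\rangle=\|\cH^{1/2}g\|^2$. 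Since $E_+$ is finite-dimensional and $\cL$-invariant, $\cL g=\cJ\cH g=0$, so $g$ lies in $E_+\cap\Ker\cL$. The restriction $\cL|_{E_+}$ has only eigenvalues with positive real part, hence is invertible, and therefore $g=0$. So the orthogonal projection $E_+\to N$ is injective and $\dim E_+\le k$.

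The same bound holds for $E_-$. To get $m\le k$ in total rather than $2k$, I would use the symmetry: $\cJ$ skew and $\cH$ self-adjoint imply that the spectrum is symmetric under $\lambda\mapsto-\bar\lambda$. I would also run the argument on the nondegenerate pairing between $E_+$ and $E_-$: the form $\langle\cH\cdot,\cdot\rangle$ on $E_+\oplus E_-$ is nondegenerate with $E_\pm$ Lagrangian, so its signature is $(d,d)$ with $d=\dim E_+=\dim E_-$. A space on which $q_\cH$ has $d$ negative directions must meet $N$-projection injectively on a $d$-dimensional negative subspace, which gives $m/2=d\le k$. I would accept the reading that $m$ counts the eigenvalues in one half-plane, or equivalently cite the quadruple symmetry, as the paper's intended counting. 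The final claim is immediate: if $k=0$, then $E_\pm=0$, so $\sigma(\cL)\subseteq\im\RR$.
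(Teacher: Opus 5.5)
Your core argument is the paper's proof. The identity $(\lambda_1+\bar\lambda_2)\langle \cH f_1,f_2\rangle_{\rm X}=0$ makes the span $E_+$ of eigenvectors with $\Re\lambda>0$ isotropic for $\langle\cH\cdot,\cdot\rangle_{\rm X}$. The projection $E_+\to N$ is then injective, because a vector of $E_+\cap N^\perp$ must lie in $\Ker\cH\subset\Ker\cL$, and invertibility of $\cL|_{E_+}$ rules this out. If $m$ counts eigenfunctions with $\Re\lambda>0$, which is exactly how the paper's proof begins, your argument is complete, including the first claim.

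You should delete your last paragraph rather than offer it as a possible route. The version in which $m$ counts both half-planes is false, so no symmetry argument can give it. For $\cJ=\begin{pmatrix}0&1\\-1&0\end{pmatrix}$ and $\cH=\mathrm{diag}(1,-1)$ one has $k=1$, while $\cL=\cJ\cH$ has the two eigenvalues $\pm1$. The paper has the same situation on $(\mathrm{Ev}_x\mathrm{Ev}_y)^{[\mathrm{odd}]}_+$, where $k=1$ and both $\lambda_\star$ and $-\bar\lambda_\star$ occur.

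The claimed nondegeneracy of $\langle\cH\cdot,\cdot\rangle_{\rm X}$ on $E_+\oplus E_-$, with signature $(d,d)$, is also not just unproven but false in general. Suppose $\cL g_2=\lambda g_2+g_1$ is a Jordan chain at some $\lambda$ with $\Re\lambda>0$. The same skew-adjointness computation gives $\langle g_1,\cH h\rangle_{\rm X}=0$ for every eigenvector $h$ of $-\bar\lambda$, so $g_1$ lies in the radical of the form. Even where nondegeneracy holds, it only reproduces the bound $\dim E_\pm\le k$ you already had.

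One gap you share with the paper: passing from ``no eigenvalues off $\im\RR$'' to $\sigma(\cL)\subseteq\im\RR$ requires that spectrum off the axis be point spectrum. In the paper's applications this follows from the finite-rank splitting behind Theorem~\ref{thm:instability_criterion}, via a Fredholm alternative.
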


\begin{proof}
    Let $\{f_j\}_{j=1}^m$ of $\cL$ be the eigenfunctions of $\cL$ with eigenvalues $\{\lambda_j\}_{j=1}^m$ such that $\Re(\lambda_j) > 0$ for all $j$, and let $S = \mathrm{span}\{f_1, \dots, f_m\}$. We  first show that $\langle\cH \cdot, \cdot\rangle$ vanishes on $S$. We notice that for any $p, q = 1, 2, \dots , m$ (possibly equal), we have:
    \begin{equation} \label{eq:H_orthogonal}
        \lambda_p \langle f_p, \cH f_q \rangle = \langle \cJ \cH f_p, \cH f_q \rangle = - \langle \cH f_p, \cJ \cH f_q \rangle = - \overline{\lambda}_q \langle \cH f_p, f_q \rangle.
    \end{equation}
   Hence $(\lambda_p + \overline{\lambda}_q) \langle \cH f_p, f_q \rangle = 0$. On the other hand, since $\Re(\lambda_p), \Re(\lambda_q) > 0$, the sum $\lambda_p +\overline{\lambda}_q $ does not vanish. Thus, $\langle \cH f_p, f_q \rangle = 0$ for all $p, q =1,\dots,m$ and the bilinear form associated with $\cH$ vanishes identically on the $m$-dimensional subspace $S$. To conclude, let $\Pi_+$ and $\Pi_-$ be the orthogonal projection respectively onto the maximal positive and negative space of $\cH$. We claim that the vectors $\Pi_- f_1,\dots,\Pi_- f_m$ are linearly independent. If not, then there would exist $S\ni f = \alpha_1 f_1 + \dots + \alpha_m f_m \neq 0$ such that $\Pi_- f =\alpha_1 \Pi_- f_1 + \dots + \alpha_m \Pi_- f_m $ vanishes. Since $\langle \cH f, f \rangle = 0$, this implies that $\Pi_+ f = 0$, so we conclude $f \in \mathrm{Ker}(\cH) \subset \mathrm{Ker}(\cL)$. That is a contradiction, since $f \neq 0$ and $f$ is a linear combination of $f_i$, which are eigenfunctions whose eigenvalues have positive real part.
    This proves our claim that $\{ \Pi_- f_i \}_{i=1}^m$ are independent, whence $m = \mathrm{dim}(S) \leq \mathrm{dim}\big(\Pi_-({\rm X})\big) = k$.
\end{proof}

\begin{remark}[Negative and kernel directions of $\cH$]
    In the case of the Taylor-Green vortex, one can readily see that the self-adjoint operator $\cH=\uno+2\Delta^{-1}$ in \eqref{cJcH} has a four-dimensional negative subspace given by
\begin{equation}\label{negativedirectionscH}
    \mathrm{N}:=\mathrm{span}\big\{\sin(x)\,,\sin(y)\,,\cos(x)\,,\cos(y) \big\} \, ,
\end{equation}
and a four-dimensional kernel given by
\begin{equation}\label{kernelcH}
\mathrm{K}:=\mathrm{span}\big\{\sin(x)\sin(y)\,,\sin(x)\cos(y)\,,\cos(x)\sin(y)\,,\cos(x)\cos(y) \big\}\, .
\end{equation}
On the orthogonal space $(\mathrm{N} \oplus \mathrm{K})^\perp $, the operator $\cH$ is positive-definite and coercive. Indeed, if $f\in (\mathrm{N} \oplus \mathrm{K})^\perp$ then the Fourier coefficients $\hat{f}(j,k)=0$ for $(j,k)\in \{(0,0),(0,\pm 1), (\pm 1,0), (\pm 1,\pm 1)\}$, meaning that on the Fourier support of $f$ one has $j^2+k^2\geq 5$. Hence, being $\cH$ a Fourier multiplier explicitly given by $1-2/(j^2+k^2)$, we get the coercivity bound for $\cH$ on its stable subspace
\begin{equation}\label{coercivewhole}
    \langle \cH f, f\rangle_{L^2}=\sum_{k,j\in \ZZ}\mathbbm{1}_{\{j^2+k^2\geq 5\}} \Big(1-\frac{2}{j^2+k^2}\Big)|\hat f(j,f)|^2\,  \geq \,\frac35 \|f\|_{L^2}^2 \,,\quad \forall f \in (\mathrm{N} \oplus \mathrm{K})^\perp\, .
\end{equation}
Thus, by Lemma \ref{lemma:uniqueness_algebra}, the operator $\cL$ in \eqref{cL} has at most four unstable eigenvalues. Most of the paper is dedicated to show that such unstable eigenvalues are actually exactly two, as stated in Theorem~\ref{linthm}.
\end{remark}

\subsection{Proof of the instability criterion}
We now turn our attention to the proof Theorem \ref{thm:instability_criterion}, which is based on an explicit construction of the desired isomorphism between $\Ker(\cL-\lambda)$ and $\Ker(\cM_\lambda)$.
\begin{proof}[Proof of Theorem \ref{thm:instability_criterion}]
    Let $f\in \Ker(\cL-\lambda)$. By decomposing $\cL=\cJ(\cH_s+\cH_u)$, we get
\begin{align}\label{eq:JHsHu}
    (\cJ\cH_s-\lambda) f=-\cJ\cH_uf.
\end{align}
Since $\cH_s$ is coercive and self-adjoint,  by standard functional calculus, there exists a unique coercive square root operator $\cH_s^\frac12$, and it commutes with $\cH_u$.
By applying the operator $\cH_s^\frac12$ to the identity \eqref{eq:JHsHu}, 
\begin{align}
\label{eq:fT}
    (\cT-\lambda)(\cH_s^\frac12 f)=-\cT\cH_u(\cH_s^{-\frac12} f).
\end{align}
Since $\Re\,\lambda\neq 0$ and $\cT$ is skew-adjoint, we can invert $(\cT-\lambda)$. We apply first $(\cT-\lambda)^{-1}$  and then the operator $\cH_u\cH_{s}^{-1}$ to both sides of \eqref{eq:fT}, to deduce that 
\begin{align}\label{compression}
    \cH_u \cH_s^{-\frac12} f+\cH_u\cH_s^{-1}(\cT-\lambda)^{-1}\cT\big(\cH_u\cH_s^{-\frac12}f\big)=0.
\end{align}
The identity above readily implies that if $f\in \Ker(\cL-\lambda)$ then $z\coloneqq \cH_u\cH_s^{-\frac12}f\in \Ker(\cM_{\lambda}).$ 

Now, let $z\in \Ker(\cM_\lambda)$, meaning that 
\begin{align}
    z+\cH_u\cH_s^{-1}(\cT-\lambda)^{-1}\cT z=0.
\end{align}
Then, let $f\coloneqq \cH_s^{-\frac12}(\cT-\lambda)^{-1}\cT z$. Since $\cH_s^\frac12$ is positive definite, by \eqref{eq:JHsHu} we get that $f\in \Ker(\cL-\lambda)$ if and only if \eqref{eq:fT} is satisfied. With the choice of $f$ as before, we compute that \begin{align}
     (\cT-\lambda)(\cH_s^\frac12 f)=\cT z, \qquad \cT\cH_u(\cH_s^{-\frac12}f)=
    \cT\cH_u\cH_s^{-1}(\cT-\lambda)^{-1}\cT z.
\end{align}
Hence, 
\begin{align}
  (\cT-\lambda)(\cH_s^\frac12 f)+\cT\cH_u(\cH_s^{-\frac12}f)=\cT\big(z+\cH_u\cH_s^{-1}(\cT-\lambda)^{-1}\cT z)=0,
\end{align}
where in the last identity we used that $z\in \Ker(\cM_\lambda)$, and therefore $f\in \Ker(\cL-\lambda)$ as desired.

All in all, we can write explicitly the linear isomorphisms of vector spaces, inverse of each other, as
\begin{alignat}{2} \label{eq:isomorphism}
&\Ker(\cL-\lambda) \to \ \mathrm{Ker}(\cM_\lambda) \qquad &&\mathrm{Ker}(\cM_\lambda)  \to \Ker(\cL-\lambda)\\
&f \mapsto \ \cH_u \cH_s^{-1/2} f   &&z   \mapsto  -\cH_s^{-1/2}(\cT - \lambda)^{-1} \cT z.
\end{alignat}

Finally, we study the $n = 1$ case. Since $\cH_u$ is self-adjoint and rank-$1$, we know that there exists a vector $E_u$ and $h_u\in \RR$  such that  $$\cH_u = -\frac{h_u}{\|E_u\|_{\rm X}^2} (E_u \otimes E_u), \quad \Longrightarrow\quad\cH_u(E_u)=-h_uE_u$$ 
Since $\cH_u$ and $\cH_s$ commute, we know that $\cH_u(\cH_s(E_u))=\cH_s(\cH_uE_u)=-h_u\cH_s(E_u)$. Therefore, being $\cH_s$ a positive definite self-adjoint operator we infer that there exists $s_u>0$ such that $\cH_s(E_u)=s_u E_u$. Moreover, note that $s_u - h_u = \langle (\cH_s + \cH_u)E_u, E_u \rangle = \langle \cH E_u, E_u \rangle$. Finally
\begin{align}
\label{pf:n1Phi}
\Phi (\lambda) &= \mathrm{det}(\cM_\lambda) = \frac{\langle \cM_\lambda E_u, E_u \rangle_{\rm X}}{\|E_u\|^2_{\rm X}} = 1 +\frac{1}{\|E_u\|_{\rm X}^2} \langle (\cT - \lambda)^{-1} \cT E_u, \cH_s^{-1} \cH_u E_u \rangle_{\rm X} \\
&= 1 - \frac{h_u}{s_u} \frac{1}{\|E_u\|_{\rm X}^2}\langle (\cT - \lambda)^{-1} \cT E_u, E_u \rangle_{\rm X},
\end{align}
which proves the desired identity \eqref{eq:Phi_def}.
\end{proof}

\subsection{Properties of the stability determinant on \texorpdfstring{$\mathbb{R}$}{R}}

Let us consider now a \textit{real} Hamiltonian operator $\cL$ fulfilling Definition \ref{def:Hamiltonian} on a real Hilbert space $\rm{X}$.
In view of Theorem \ref{thm:instability_criterion}, we reduced the problem of finding unstable modes for a Hamiltonian operator $\cL$ to finding zeros of the \textit{stability determinant} $\Phi(\lambda)$. It is first natural to ask under which circumstances $\Phi$ has \emph{real} zeros. Notice in fact that $\Phi(\lambda) \in \mathbb{R}$ for every $\lambda \in \mathbb R$ since $\cT$ is real. Moreover, since $\cT$ is skew-adjoint, we have $\| (\cT - \lambda)^{-1} \|_{{\rm X}} \leq \frac{1}{| \mathrm{Re} (\lambda )|}$ and
\begin{equation}
    \label{eqn:phi1}
    \lim_{|\lambda |\to +\infty} \Phi (\lambda) = 1.
\end{equation}
We then need to understand whether $\Phi$ has limit as $\lambda \to 0^+$; since, if the latter happens to be negative, we can automatically infer the existence of at least one real unstable eigenvalue, by relying on the continuity of $\Phi$ on the positive real semiaxis. 
Remarkably, the limit  as $\lambda \to 0^+$ of the matrix $\cM_{\lambda}$ in \eqref{def:MPhi} is always well-defined.
\begin{proposition}[Limit at zero of $\cM_\lambda$]\label{prop:limit_zero}
Under the same hypotheses of Theorem \ref{thm:instability_criterion},  the finite-rank operator $\cM_\lambda$ admits the limit
\begin{equation}\label{eq:limit_M_0}
    \lim_{\lambda \to 0^+} \cM_\lambda = \uno + \cH_u \cH_s^{-1} (\uno-P_{\cT}) =: \cM_0\,,
\end{equation}
where $P_{\cT}$ is the orthogonal projection onto the kernel of $\cT:= \cH^{1/2} \cJ \cH^{1/2}$ and  can be expressed as 
\begin{equation}\label{fromPTtoP}
    P_{\cT}=\cH_s^{-\frac12}P(P\cH_s^{-1}P)|_{\Ker(\cJ)}^{-1}P\cH_s^{-\frac12}\, ,
\end{equation}
where $P$ is the orthogonal projection onto the kernel of $\cJ$. In particular, if $n=1$ then
\begin{equation}\label{Phin=1}
\lim_{\lambda \to 0^+}\Phi(\lambda):=\Phi(0)=1+\frac{h_u}{s_u}\Big(\frac{1}{s_u\|E_u\|^2_{\rm X}}\langle (P\cH_s^{-1}P)^{-1}PE_u,PE_u\rangle_{\rm X}-1\Big)\, .
    \end{equation}
\end{proposition}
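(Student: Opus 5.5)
The plan has three parts: compute the strong limit of $(\cT-\lambda)^{-1}\cT$ as $\lambda\to0^+$, identify the kernel projection $P_{\cT}$ in terms of $P$, and then specialize to $n=1$.

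\emph{Step 1: the strong limit.} Since $\cT=\cH_s^{1/2}\cJ\cH_s^{1/2}$ is skew-adjoint, the spectral theorem applies to $\cT=\im A$ with $A$ self-adjoint. This gives
\[
(\cT-\lambda)^{-1}\cT = g_\lambda(A),\qquad g_\lambda(a)=\frac{\im a}{\im a-\lambda}.
\]
For real $\lambda>0$ we have $|g_\lambda|\le 1$. Moreover, $g_\lambda(a)\to 1$ for $a\neq0$, and $g_\lambda(0)=0$. By dominated convergence in the spectral measure, $g_\lambda(A)\to \uno-P_{\cT}$ strongly, where $P_{\cT}$ is the spectral projection onto $\Ker A=\Ker\cT$.

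\emph{Step 2: the limit of $\cM_\lambda$.} The operator $\cH_u\cH_s^{-1}$ is bounded with finite-dimensional range ${\rm V}_u$. Composing it with the strongly convergent family, and restricting to the finite-dimensional space ${\rm V}_u$, turns strong convergence into norm convergence. This yields \eqref{eq:limit_M_0}, and the determinants converge as well.

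\emph{Step 3: identifying $P_{\cT}$.} Since $\cH_s^{1/2}$ is invertible, $\cT g=0$ if and only if $\cH_s^{1/2}g\in\Ker\cJ$. Hence
\[
\Ker\cT=\cH_s^{-1/2}\Ker\cJ .
\]
(The statement writes $\cH^{1/2}$, but this should be read as $\cH_s^{1/2}$.) I will then check directly that the operator $Q:=\cH_s^{-1/2}P(P\cH_s^{-1}P)|_{\Ker\cJ}^{-1}P\cH_s^{-1/2}$ is the orthogonal projection onto this space, via three checks:
\begin{itemize}
\item $Q$ is self-adjoint, which is clear from its form.
\item $Q^2=Q$. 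The middle factor is $P\cH_s^{-1/2}\cH_s^{-1/2}P=P\cH_s^{-1}P$, which cancels against the inverse.
\item The range of $Q$ lies in $\cH_s^{-1/2}\Ker\cJ$, and $Q$ fixes $\cH_s^{-1/2}k$ for every $k\in\Ker\cJ$. Indeed, $Q\cH_s^{-1/2}k=\cH_s^{-1/2}(P\cH_s^{-1}P)^{-1}P\cH_s^{-1}Pk=\cH_s^{-1/2}k$.
\end{itemize}

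\emph{The main obstacle} is justifying that $P\cH_s^{-1}P$ is invertible on $\Ker\cJ$. This needs coercivity of $\cH_s^{-1}$, i.e.\ boundedness of $\cH_s$, which follows since $\cH_s=\cH-\cH_u$ is bounded. Then $\langle P\cH_s^{-1}Pk,k\rangle\ge \|\cH_s\|^{-1}\|k\|^2$, and Lax--Milgram on the closed subspace $\Ker\cJ$ gives the bounded inverse.

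\emph{Step 4: the case $n=1$.} Compute $\langle\cM_0E_u,E_u\rangle/\|E_u\|^2$ using $\cH_s^{-1}\cH_uE_u=-(h_u/s_u)E_u$ and $\cH_s^{-1/2}E_u=s_u^{-1/2}E_u$. This gives
\[
1-\frac{h_u}{s_u}\Big(1-\frac{\langle P_{\cT}E_u,E_u\rangle}{\|E_u\|^2}\Big),
\]
where
\[
\langle P_{\cT}E_u,E_u\rangle=s_u^{-1}\langle(P\cH_s^{-1}P)^{-1}PE_u,PE_u\rangle .
\]
Substituting the second expression into the first yields \eqref{Phin=1}.
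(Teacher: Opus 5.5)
Your proposal is correct and follows essentially the paper's route: you obtain the strong limit of $(\cT-\lambda)^{-1}\cT$ via the spectral theorem and dominated convergence, as in the paper's Lemma on strong convergence of the resolvent projection, and you identify $P_{\cT}$ by checking that the right-hand side of \eqref{fromPTtoP} is an orthogonal projection with range $\cH_s^{-1/2}\Ker(\cJ)=\Ker(\cT)$. You also supply details the paper leaves implicit: the coercivity argument giving invertibility of $P\cH_s^{-1}P$ on $\Ker(\cJ)$, the upgrade from strong to norm convergence on the finite-dimensional ${\rm V}_u$, and the explicit $n=1$ computation; you also correctly read the typo $\cH^{1/2}$ as $\cH_s^{1/2}$.
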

Observe that the right-hand side in \eqref{Phin=1} always returns a real number, because $\cH_u$, $\cH_s$ and $(P\cH_s^{-1}P)^{-1}$ are self-adjoint operators.

The proof of Proposition \ref{prop:limit_zero} boils down to computing the limit as $\lambda \to 0^+$ of $(\cT-\lambda)^{-1}\cT$, for a general skew-adjoint operator $\cT$. This is achieved by Lemma \ref{lem:strong_convergence} below. In the case of the transport operator $\cJ=\{\psi,\cdot\}=\nabla^\perp\psi\cdot \nabla$, with  $\psi\in C^2(\TT^2)$, the limit was carried out by Z.\ Lin in \cite{Lin04IMRS} and recently revisited in \cite{latushkin2018eigenvalues}. In particular, it was shown that $(\cJ-\lambda)^{-1}\cJ\to \uno -P$ as $\lambda \to 0^+$, where $P$ is the orthogonal projection onto the kernel of $\cJ$ and the convergence is strong in $L^2_0(\TT^2)$. In general, we are able to show
the following result, that roots back to \cite[ Section III.6.5]{Kato1995}. 
\begin{lemma}[Strong Convergence of Resolvent Projection] \label{lem:strong_convergence}
    Let $\cT$ be a closed, densely defined, skew-adjoint operator on a Hilbert space ${\rm X}$. Let $P_{\cT}$ denote the orthogonal projection onto the kernel of $\cT$. Then, the operator family converges strongly as $\lambda \to 0^+$:
    \begin{equation}
        \lim_{\lambda \to 0^+} (\cT - \lambda)^{-1} \cT = \uno - P_{\cT} \,.
    \end{equation}
\end{lemma}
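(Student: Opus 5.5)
The plan is to rewrite the operator family algebraically and then use the spectral theorem for skew-adjoint operators. For $\lambda>0$ the resolvent $(\cT-\lambda)^{-1}$ exists and is bounded with norm at most $1/\lambda$. On $D(\cT)$ we have $(\cT-\lambda)^{-1}\cT = \uno + \lambda(\cT-\lambda)^{-1}$. The right-hand side is bounded, so the family extends to all of ${\rm X}$ with this formula. It therefore suffices to show that
\begin{equation*}
\lambda(\cT-\lambda)^{-1} \longrightarrow -P_{\cT}\quad\text{strongly as } \lambda\to 0^+ .
\end{equation*}

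First I would check this on the two pieces of an orthogonal splitting. On $\Ker(\cT)$ it is exact: for $f\in\Ker(\cT)$ we get $\lambda(\cT-\lambda)^{-1}f=-f$. Next, since $\cT$ is skew-adjoint, $\Ker(\cT)^\perp=\overline{\Rn(\cT^*)}=\overline{\Rn(\cT)}$. For $f=\cT g$ with $g\in D(\cT)$, we have $\lambda(\cT-\lambda)^{-1}\cT g=\lambda g+\lambda^2(\cT-\lambda)^{-1}g$. Its norm is at most $2\lambda\|g\|$, which tends to $0$.

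Then I would pass to the closure by density, using the uniform bound $\|\lambda(\cT-\lambda)^{-1}\|\le 1$. This bound holds because $\|(\cT-\lambda)u\|^2=\|\cT u\|^2+\lambda^2\|u\|^2\ge\lambda^2\|u\|^2$, the cross term $2\lambda\,\Re\langle \cT u,u\rangle$ vanishing by skew-adjointness. A standard $\varepsilon/3$ argument then gives $\lambda(\cT-\lambda)^{-1}f\to0$ for every $f\in\overline{\Rn(\cT)}$. Combining the two pieces gives the strong limit $-P_{\cT}$, and hence the lemma.

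An alternative route is to use the spectral theorem: write $\cT=\im A$ with $A$ self-adjoint and spectral measure $\mu_f$. Then $\|\lambda(\cT-\lambda)^{-1}f+P_{\cT}f\|^2=\int_{\RR\setminus\{0\}}\frac{\lambda^2}{s^2+\lambda^2}\,d\mu_f(s)$, which tends to $0$ by dominated convergence.

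The only delicate point is the identification $\Ker(\cT)^\perp=\overline{\Rn(\cT)}$ for an unbounded, closed, densely defined $\cT$. This is standard, since $\Rn(\cT)^\perp=\Ker(\cT^*)=\Ker(-\cT)$. The convergence is only strong and not in norm, because spectrum accumulating at $0$ prevents uniformity; I expect no real obstacle beyond this.
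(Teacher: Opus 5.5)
Your proof is correct, but your main argument takes a different route from the paper's.

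\textbf{The paper's route.} The paper's proof is exactly your ``alternative route''. It writes $\cT=\int_\RR \im\nu\,d\mu(\nu)$ and identifies $P_{\cT}=\mu(\{0\})$. It then defines $(\cT-\lambda)^{-1}\cT$ on all of ${\rm X}$ by functional calculus through the symbol $h_\lambda(\nu)=\im\nu/(\im\nu-\lambda)$. Dominated convergence finishes the proof, using $|h_\lambda|\le 1$ and $h_\lambda\to\chi_{\RR\setminus\{0\}}$ pointwise.

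\textbf{Your route.} Your main argument is instead the classical Abel-mean ergodic argument. The identity $(\cT-\lambda)^{-1}\cT=\uno+\lambda(\cT-\lambda)^{-1}$ reduces the statement to $\lambda(\cT-\lambda)^{-1}\to-P_{\cT}$. This limit is exact on $\Ker(\cT)$. On $f=\cT g\in\Rn(\cT)$ the error is $O(\lambda\|g\|)$. It then extends to $\overline{\Rn(\cT)}=\Ker(\cT)^\perp$ by the uniform bound $\|\lambda(\cT-\lambda)^{-1}\|\le 1$.

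\textbf{What each buys.}
\begin{itemize}
\item Your argument avoids the spectral theorem entirely. It only uses the resolvent estimate and the kernel--range duality $\Rn(\cT)^\perp=\Ker(\cT^*)=\Ker(\cT)$.
\item It gives an explicit rate on the dense subspace $\Ker(\cT)\oplus\Rn(\cT)$.
\item The density step works unchanged for any operator with $\sup_{\lambda>0}\|\lambda(\cT-\lambda)^{-1}\|<\infty$ on $\Ker(\cT)\oplus\overline{\Rn(\cT)}$. There the limit is the possibly non-orthogonal projection onto $\Ker(\cT)$ along $\overline{\Rn(\cT)}$. Skew-adjointness is needed only to make this subspace all of ${\rm X}$ and the projection orthogonal.
\item The paper's route is shorter once the spectral theorem is granted. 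It also gives meaning to $(\cT-\lambda)^{-1}\cT$ on all of ${\rm X}$ directly.
\item You instead need the remark that the operator, defined on $D(\cT)$, extends to the bounded operator $\uno+\lambda(\cT-\lambda)^{-1}$. You correctly supply that remark.
\end{itemize}
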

\begin{proof}
    Since $\cT$ is skew-adjoint, by the spectral theorem for skew-adjoint operators \cite[Theorem VIII.4]{ReedSimonI}, there exists a unique projection-valued measure $\mu$ on $\mathbb{R}$ such that $\cT = \int_{\mathbb{R}} \im\nu \, d\mu(\nu)$. The identity operator is given by $\uno = \int_{\mathbb{R}} d\mu(\nu)$, and the projection onto the kernel corresponds to $P_{\cT} = \mu(\{0\})$.
    
    Using functional calculus, the operator $(\cT - \lambda)^{-1} \cT$ is defined by the integral with respect to the measure $\mu$ of the function
    \begin{equation}
        h_\lambda(\nu) = \frac{\im\nu}{\im\nu - \lambda} \,.
    \end{equation}
   The sequence $h_\lambda(\nu)$ converges pointwise to the indicator function $\chi_{\mathbb{R} \setminus \{0\}}(\nu)$ as $\lambda \to 0^+$ for $\nu \in \mathbb{R}$.
     Furthermore, since $\lambda,\nu\in\RR$, we observe that the following uniform bound holds
    \begin{equation}
        |h_\lambda(\nu)|^2 = \frac{\nu^2}{\nu^2 + \lambda^2} \leq 1 \quad \text{for all } \nu,\, \lambda \in \RR\, .
    \end{equation}
    For any vector $f \in {\rm X}$, let $\mu_f$ be the scalar spectral measure defined by $\mu_f(B) := \langle \mu(B)f,f\rangle_{\rm X}$ for any Borel set $B$ of $\RR$. Since the constant function $1$ is integrable with respect to the finite measure $\mu_f$, we apply the dominated convergence theorem
    \begin{equation}
        \lim_{\lambda \to 0^+} \| (\cT-\lambda)^{-1}\cT f - (\uno - P_{\cT}) f \|_{{\rm X}}^2 = \lim_{\lambda \to 0^+} \int_{\mathbb{R}} \left| h_\lambda(\nu) - \chi_{\mathbb{R} \setminus \{0\}}(\nu) \right|^2 \, d\mu_f(\nu) = 0 \,.
    \end{equation}
    Then $(\cT-\lambda)^{-1}\cT$ converges strongly to the operator 
    $I - P_{\cT}$.
\end{proof}
With Lemma \ref{lem:strong_convergence}, the proof of Proposition \ref{prop:limit_zero} is reduced to proving the identity \eqref{fromPTtoP}.
\begin{proof}[Proof of Proposition \ref{prop:limit_zero}]
By combining Lemma \ref{lem:strong_convergence} with \eqref{def:MPhi}, the proof of \eqref{eq:limit_M_0} is straightforward. To prove the identity \eqref{fromPTtoP},
 let $\Pi$ be its right-hand side. This is a well-defined operator, since $P\cH_s^{-1}P $ is invertible on $\Ker(\cJ)= \Rn(P)$.
 {Using that $\cH_s$ is bijective, we have that} $P \cH_s^{-\frac12}$ maps the whole space $\rm X$ into $\Ker(\cJ)$ surjectively. Moreover, one readily checks that $\Pi^2 = \Pi = \Pi^*$,
namely $\Pi$ is an orthogonal projection. It now suffices to 
show that $\Rn(\Pi) = \Ker(\cT)$.
This follows from the fact that $\Ker(\cT) = \cH_s^{-\frac12} \Ker(\cJ)$, which is a direct consequence of the definition of $\cT$.
\end{proof}

Note that, in the case where $\cJ=\{\psi,\cdot\}$ is the transport operator and $\psi$ is sufficiently regular, $P$ can be easily characterized via the  average along the streamlines (connected components of level sets) of $\psi$ \cite[Lemma 2.3]{LinCMP2004}. Namely, let $\{\psi=h\}=\cup_{j=1}^{n_h}\Gamma_j(h)$ where $h$ is a regular value for $\psi$. Then, for any $(x,y)\in \Gamma_j(h)$ we have
\begin{equation}
    \label{eqn:projectionkerJ}
(Pf)(x,y)=\frac{1}{\oint_{\Gamma_j(h)}\frac{d\sigma}{|\nabla \psi|}}\oint_{\Gamma_j(h)}f\frac{d\sigma}{|\nabla \psi|}.
\end{equation} By Sard's theorem, whenever $\psi$ is smooth enough we can ignore critical points since the level sets associated to them have measure zero, meaning that the formula above fully characterizes the projection $P$ in $L^2$. We will exploit this characterization of the average in Section \ref{subsec:difficult_stability}.

As a consequence of Proposition \ref{prop:limit_zero}  and of Theorem \ref{thm:instability_criterion} we have the following.
\begin{corollary}
[Instability criterion for real eigenvalues]\label{lem:weakinstability} Let $\cL = \cJ \cH$ be a Hamiltonian operator satisfying the assumptions of Theorem \ref{thm:instability_criterion}. If the maximal negative invariant subspace ${\rm V}_-$ of $\cH$ has finite dimension $k$, with $k$ odd, and $P$ vanishes identically on ${\rm V}_-$ then $\cL$ has at least one real unstable  eigenvalue.
\end{corollary}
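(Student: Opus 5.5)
We use $\Phi$ on the positive real semiaxis. By \eqref{eqn:phi1}, $\Phi(\lambda)\to 1$ as $\lambda\to+\infty$. The plan is to show that $\lim_{\lambda\to0^+}\Phi(\lambda)=\det\cM_0<0$. Since $\Phi$ is real and continuous on $(0,\infty)$, it then has a zero $\lambda_0>0$. Theorem \ref{thm:instability_criterion} gives $\mathrm{Ker}(\cL-\lambda_0)\neq\{0\}$, so $\lambda_0$ is a real unstable eigenvalue.

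\textbf{Choice of splitting.} Take $\cH_u := \cH\,\Pi_{-}$ restricted appropriately, where $\Pi_-$ is the spectral projection of $\cH$ onto ${\rm V}_-$. More precisely, set
\[
\cH_s := \cH(\uno-\Pi_-) + \Pi_- , \qquad \cH_u := (\cH-\uno)\Pi_- .
\]
Then ${\rm V}_u={\rm V}_-$ and $n=k$. Both operators are functions of $\cH$, so they are self-adjoint and commute. We have $\cH_s = \uno$ on ${\rm V}_-$, and $\cH_s=\cH$ on ${\rm V}_-^\perp$.

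Coercivity of $\cH_s$ on ${\rm V}_-^\perp$ is the delicate point. We need $\cH$ to be bounded below by a positive constant there, which fails if $\cH$ has a kernel or spectrum accumulating at $0$. Since the corollary is stated under ``the assumptions of Theorem \ref{thm:instability_criterion}'', I will read it as granting some admissible splitting with ${\rm V}_u={\rm V}_-$. Concretely, I expect one can absorb the kernel of $\cH$ into the identity part as above, replacing $\Pi_-$ by the spectral projection onto $(-\infty,0]$ while keeping ${\rm V}_u={\rm V}_-$. This is the step I expect to need care, and I would assume a spectral gap of $\cH$ at $0$ from below on the nonnegative part.

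\textbf{Computing $\cM_0$.} By Proposition \ref{prop:limit_zero}, $\cM_0=\uno+\cH_u\cH_s^{-1}(\uno-P_\cT)$. The key claim is that $P_\cT$ vanishes on ${\rm V}_-$, using \eqref{fromPTtoP}. Since $\cH_s^{-1/2}$ acts as the identity on ${\rm V}_-$ and preserves ${\rm V}_-$, for $z\in{\rm V}_-$ we get $P\cH_s^{-1/2}z=Pz=0$, hence $P_\cT z=0$.

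So for $z\in{\rm V}_u$ we have
\[
\cM_0 z = z+\cH_u\cH_s^{-1}z - \cH_u\cH_s^{-1}P_\cT z = z+(\cH-\uno)z=\cH z .
\]
Here the last term was dropped after noting that $\cH_u$ only sees the ${\rm V}_-$ component. To justify this, observe that $\cH_u\cH_s^{-1}P_\cT z$ is the component in ${\rm V}_u$ of something. By self-adjointness, $\langle \cH_u\cH_s^{-1}P_\cT z,w\rangle=\langle z, P_\cT\cH_s^{-1}\cH_u w\rangle$ with $\cH_s^{-1}\cH_u w\in{\rm V}_-$, and this vanishes by the same argument. Hence $\cM_0=\cH|_{{\rm V}_-}$.

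\textbf{Conclusion.} $\det\cM_0$ is the product of the $k$ negative eigenvalues of $\cH$, counted with multiplicity. Since $k$ is odd, $\det\cM_0<0$. Strong convergence in Lemma \ref{lem:strong_convergence} on the finite-dimensional ${\rm V}_u$ gives $\Phi(\lambda)\to\det\cM_0$, and the intermediate value theorem on $(0,\infty)$ concludes. Realness of $\Phi$ on $\RR$ uses that $\cT$ is real, as noted before Proposition \ref{prop:limit_zero}.
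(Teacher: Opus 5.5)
When $\Ker\cH=\{0\}$ your argument is correct, and it is shorter than the paper's. Normalizing $\cH_s=\uno$ on ${\rm V}_u={\rm V}_-$ makes $P_\cT$ vanish on ${\rm V}_u$ directly by \eqref{fromPTtoP}, so $\cM_0=\cH|_{{\rm V}_-}$ and $\det\cM_0<0$ is immediate. The paper instead works with ${\rm V}_u={\rm V}_-\oplus\Ker\cH$ and a general $\cH_s$, and needs the conjugation to the self-adjoint $\cS$ and the signature of $\Sigma=\cH+\cP$. Your self-adjointness detour for $\cH_u\cH_s^{-1}P_\cT z$ is unnecessary, since $P_\cT z=0$ already.

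The spectral gap you propose to assume is in fact automatic. For any admissible splitting $\cH=\cH_s'+\cH_u'$, the subspace $\Ker\cH_u'$ is $\cH$-invariant, has finite codimension, and $\cH=\cH_s'\geq c>0$ on it. Hence $\sigma(\cH)\cap(-\infty,c)$ consists of finitely many eigenvalues of finite multiplicity, and your $\cH_s$ is coercive as soon as $\Ker\cH=\{0\}$.

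The genuine gap is the case $\Ker\cH\neq\{0\}$, and your proposed fix is inconsistent. In that case no admissible splitting has ${\rm V}_u={\rm V}_-$: $\cH_u$ would vanish on ${\rm V}_-^\perp\supseteq\Ker\cH$, so $\cH_s=\cH$ would vanish there. Using the spectral projection onto $(-\infty,0]$ instead produces ${\rm V}_u={\rm V}_-\oplus\Ker\cH$. On that space $P_\cT$ no longer vanishes, so $\cM_0\neq\cH|_{{\rm V}_u}$ (which would have zero determinant anyway). Redo your computation with $\cH_s=\uno$ on ${\rm V}_u$, using that $P_\cT$ is self-adjoint and kills ${\rm V}_-$. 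This gives the block form
\[
\cM_0=\cH|_{{\rm V}_-}\oplus \Pi_{\Ker\cH}P_\cT|_{\Ker\cH},
\]
which is exactly the paper's $\Sigma=\cH+\cP$ in your normalization. The second block is only positive semidefinite: for $w\in\Ker\cH$ one has $\langle P_\cT w,w\rangle=0$ if and only if $Pw=0$. Hence $\lim_{\lambda\to0^+}\Phi(\lambda)\leq 0$, with strict inequality if and only if $\Ker\cH\cap\Ker P=\{0\}$.

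This case cannot be closed in general, because the statement fails without that nondegeneracy. Take ${\rm X}=\RR^2$, $\cJ=\begin{pmatrix}0&-1\\1&0\end{pmatrix}$, $\cH=\mathrm{diag}(-1,0)$, $\cH_s=\uno$, $\cH_u=\mathrm{diag}(-2,-1)$. Then $k=1$ and $P=0$, but $\cL=\begin{pmatrix}0&0\\-1&0\end{pmatrix}$ is nilpotent, and indeed $\Phi(\lambda)=\lambda^2/(1+\lambda^2)$.

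The paper's proof has the same hole at the step ``$\det\cS<0$'', which requires $\cP$ to be nondegenerate on $\Ker\cH$. The correct statement therefore needs the extra hypothesis $\Ker\cH\cap\Ker P=\{0\}$. This hypothesis holds in the application, Theorem \ref{thm:mn}: $\cH_{m,n}$ is injective on $(\mathrm{Ev}_x\mathrm{Ev}_y)^{[1]}_{[0,0]}$, since for $m\leq n$ the equation $m^2j'^2+n^2k'^2=m^2+n^2$ forces $j'=k'=1$, which is excluded by parity. Under this hypothesis, your splitting, enlarged by $\Ker\cH$ as above, gives a complete proof.
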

\begin{proof} Under the hypotheses of Theorem \ref{thm:instability_criterion}, it is always possible to choose a splitting $\cH=\cH_s+\cH_u$ such that
${\rm V}_u := {\rm V}_- \oplus \Ker(\cH) $, with $-\cH_u$ coercive on ${\rm V}_u$. We claim that the value $\Phi(0):= \det \cM_0$, with $\cM_0$ defined in \eqref{eq:limit_M_0}, is negative. Indeed, by exploiting that $\cH_s$ and $\cH_u$ are both invertible on ${\rm V}_u$ and commute, we conjugate  $\cM_0$ through $\cH_s^{\frac12} (-\cH_u)^{-\frac12} $ to obtain the self-adjoint operator 
$$
\cS: {\rm V}_u \to {\rm V}_u\,,\quad \cS := \uno -  \cH_s^{-\frac12} (-\cH_u)^{\frac12} (\uno - P_{\cT}) (-\cH_u)^{\frac12} \cH_s^{-\frac12} \, .
$$
 Since $\cH_s$ is coercive, the signature of $\cS_0$ is the same as the self-adjoint operator $\Sigma: {\rm V}_u \to {\rm V}_u$ given by
$$
\Sigma:= \cH_s^{\frac12} \cS \cH_s^{\frac12} = \cH_s -  (-\cH_u)^{\frac12} (\uno - P_{\cT}) (-\cH_u)^{\frac12}  = \cH +  (-\cH_u)^{\frac12} P_{\cT} (-\cH_u)^{\frac12}  =: \cH + \cP\, ,
$$
where  $\cP$ is positive-semidefinite.
For every $ u = v + w\,,\; u' = v'+w' \in {\rm V}_u $, with $v, v' \in {\rm V}_-$ and $w, w' \in \ker \cH$, one has 
$$
\langle \Sigma u, u' \rangle = \langle \cH v , v' \rangle + \langle \cP w, w' \rangle\,,
$$
because $\cH w = \cH w' = 0$ and $\cP v = \cP v' = 0$, where the latter identities follow from \eqref{fromPTtoP} and the hypothesis $P|_{{\rm V}_u}\equiv 0$.
In particular the maximal negative eigenspace of $\Sigma $ is ${\rm V}_-$, which has odd dimension. Consequently, $\cS$ has an odd number of negative directions and $\Phi(0) = \det \cM_0 = \det \cS < 0$ as claimed. By \eqref{eqn:phi1} we conclude that $\Phi$ has to vanish at least once on the positive real semiaxis.
\end{proof}
{While $\Phi(0)<0$ is a sufficient condition for real instabilities, it is not necessary in general. It becomes necessary when $\cH$ has only one negative direction, thanks to} 
the 
following monotonicity property.
\begin{proposition}[Monotonicity of $\Phi$]\label{prop:propertiesofPhi}
    Assume the same hypotheses of Theorem \ref{thm:instability_criterion} with $n=1$ and $h_u>0$.
    Then, the restriction
    $\Phi|_{\mathbb{R} \setminus \{ 0 \} } $ is a real even function, increasing
    in $| \lambda |$. 
\end{proposition}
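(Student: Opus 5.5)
Since $n=1$, the formula \eqref{eq:Phi_def} from the Remark after Theorem~\ref{thm:instability_criterion} gives
\[
\Phi(\lambda)=1-\frac{h_u}{s_u\|E_u\|_{\rm X}^2}\,\langle (\cT-\lambda)^{-1}\cT E_u,E_u\rangle_{\rm X}.
\]
It therefore suffices to study the scalar function $g(\lambda):=\langle (\cT-\lambda)^{-1}\cT E_u,E_u\rangle_{\rm X}$ for real $\lambda\neq 0$. We need to show that $g$ is real, even in $\lambda$, and \emph{decreasing} in $|\lambda|$. Because $h_u>0$ and $s_u>0$, the prefactor $-h_u/(s_u\|E_u\|^2)$ is negative, so this gives that $\Phi$ is increasing in $|\lambda|$.

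The tool is the spectral theorem for the skew-adjoint operator $\cT$, exactly as in the proof of Lemma~\ref{lem:strong_convergence}. Write $\cT=\int_\RR \im\nu\,d\mu(\nu)$ and let $\mu_{E_u}$ be the scalar spectral measure of $E_u$. Then
\[
g(\lambda)=\int_\RR \frac{\im\nu}{\im\nu-\lambda}\,d\mu_{E_u}(\nu)=\int_\RR \frac{\nu^2+\im\lambda\nu}{\nu^2+\lambda^2}\,d\mu_{E_u}(\nu),
\]
where we multiplied numerator and denominator by $-\im\nu-\lambda$.

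The real part, $\int \nu^2/(\nu^2+\lambda^2)\,d\mu_{E_u}$, is manifestly even in $\lambda$ and nonincreasing in $|\lambda|$. The integrand is pointwise decreasing in $\lambda^2$, and strictly so wherever $\nu\neq0$. So $g$ is strictly decreasing in $|\lambda|$ unless $\mu_{E_u}$ is concentrated at $0$, i.e.\ unless $E_u\in\Ker\cT$, in which case $\Phi\equiv 1$. That degenerate case should either be excluded or covered by reading ``increasing'' in the weak sense.

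The main obstacle is the imaginary part, $\lambda\int \nu/(\nu^2+\lambda^2)\,d\mu_{E_u}$, which must vanish. This is where the standing assumption of this subsection enters: $\cL$, and hence $\cT$, are real operators on a real space, so $\Phi(\lambda)\in\RR$ for $\lambda\in\RR$, as already noted before \eqref{eqn:phi1}. I would make the vanishing concrete in one of two ways. The first is to use that $\cT$ commutes with complex conjugation, so $\overline{\cT f}=\cT\bar f$. Since $E_u$ can be taken real ($\cH_u$ is real self-adjoint and rank one), the measure $\mu_{E_u}$ is symmetric under $\nu\mapsto-\nu$, and the odd integrand integrates to zero. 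The second is to verify directly that $\langle (\cT-\lambda)^{-1}\cT E_u,E_u\rangle$ is real for real $E_u$ and real $\lambda$, since then all quantities are real inner products. Either way $g(\lambda)=\int \nu^2/(\nu^2+\lambda^2)\,d\mu_{E_u}$, which is real and even, and the monotonicity follows as above.
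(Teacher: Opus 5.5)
Your proposal is correct and is essentially the paper's own proof: you use the spectral representation of $\cT$, cancel the odd imaginary part because $\cT$ is real (so $\mu_{E_u}$ is even), and use monotonicity of $\nu^2/(\nu^2+\lambda^2)$ in $|\lambda|$. One harmless slip: $\im\nu(-\im\nu-\lambda)=\nu^2-\im\nu\lambda$, not $\nu^2+\im\lambda\nu$; this does not matter since that term integrates to zero anyway.
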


\begin{proof}
From the definition of $\Phi(\lambda)$ in \eqref{eq:Phi_def}, since $h_u/s_u>0$, it is enough to show that
    \begin{equation}
        I(\lambda) := \langle (\cT-\lambda)^{-1} \cT E_u, E_u \rangle_{{\rm X}}
    \end{equation}
    is decreasing.
    Let us consider, as we did in the proof of Lemma \ref{lem:strong_convergence},
    the
    projection-valued measure $\mu$ on $\RR$ such that $\cT = \int_{\RR} \im \nu \, d\mu (\nu)$, and  $\mu_{E_u}(B) := \langle \mu(B)E_u, E_u \rangle_{\rm X}$, for every Borel set $B$ of $\RR$. Then
    \begin{equation} \label{eq:spectral_int}
        I(\lambda) = \int_{\RR} \frac{\im\nu}{\im\nu - \lambda} \, d\mu_{E_u}(\nu) = \int_{\RR} \frac{\im\nu(-\im \nu - \lambda)}{\nu^2 + \lambda^2} \, d\mu_{E_u}(\nu) = \int_{\RR} \frac{\nu^2 - \im \nu\lambda}{\nu^2 + \lambda^2} \, d\mu_{E_u}(\nu).
    \end{equation}
    Moreover, the operator $\cT$ is real, and $I(\lambda)$ is real as well. Alternatively, one can argue that since $\nu \in \mathbb R$ and $\cT$ is real, the spectral measure $\mu_{E_u}$ is even, i.e.\ $\mu_{E_u}(B) = \mu_{E_u}(-B)$ for every Borel set of $\RR$. The imaginary part in the integrand of \eqref{eq:spectral_int} is odd in $\nu$ and its integral vanishes. As a consequence
    \begin{equation}
        I(\lambda) = \int_{\RR} \frac{\nu^2}{\nu^2 + \lambda^2} \, d\mu_{E_u}(\nu).
    \end{equation}
    We conclude that $I(\lambda)$ is strictly decreasing, and $\Phi(\lambda)$ is strictly increasing, with respect to $|\lambda |$.
\end{proof}

\begin{remark}[A lower bound for $\Phi(0)$] \label{rem:n1phi0bis}
    To rule out positive real eigenvalues through the previous proposition, one needs only to justify that $\Phi(0)>0$. To this end, we lower bound identity \eqref{Phin=1}
   and choose the constant $s_u$ conveniently (and hence $h_u$). We observe that  since $P\cH_s^{-1}P$ is self-adjoint and bounded on ${\rm X}$, then for $f\in \Ker(\cJ)$ with $\|f\|_{\rm X}=1$ it holds that
\begin{align} \begin{split}
\label{def:coercivityRem}
    \langle (P\cH_s^{-1}P)^{-1}f,f\rangle_{\rm X} &= \|(P\cH_s^{-1}P)^{-1/2}f \|^2_X \\
    &\geq \frac{1}{\|P\cH_s^{-1}P\|_{{\rm X}\to{\rm X}}}\geq \inf_{\|g\|_{\rm X}=1}\langle \cH_sg,g\rangle_{{\rm X}} =: {\rm c}(\cH_s)>0.
\end{split} \end{align}
Indeed the second inequality is justified since $\cH_s$ is positive and $\|P\cH_s^{-1}P\|_{{\rm X}\to{\rm X}}\leq \|\cH_s^{-1}\|_{{\rm X}\to{\rm X}}\leq 1/{\rm c}(\cH_s)$, where this follows from ${\rm c}(\cH_s)\|g\|^2\leq \langle \cH_sg,g\rangle\leq \|\cH_sg\|\|g\| $ by replacing $g=\cH_s^{-1}h$. Then, since ${\rm V}_u={\rm span}\{E_u\}$ and $\cH_s|_{{\rm V}_u}=s_u\uno_{{\rm V}_u}$, we know that  ${\rm c}(\cH_s)=\min \{{\rm c}(\cH|_{{\rm V}_u^\perp}), s_u\}$, where ${\rm c}(\cH|_{{\rm V}_u^\perp})$ is defined as in \eqref{def:coercivityRem}. Thus, choosing $s_u={\rm c}(\cH|_{{\rm V}_u^\perp})$ we get that
\begin{equation}
\label{eq:achoice}
    \Phi(0)\geq 1+\frac{h_u}{s_u}\Big(\frac{\|PE_u\|_{\rm X}^2}{\|E_u\|_{\rm X}^2}-1\Big).
\end{equation}
The choice of this particular parameter $s_u$ is exactly the one we will implement in 
Section \ref{sec:stability}. 
\end{remark}

We conclude this section with the following.
\begin{remark}[Comparison with Lin's criterion]\label{rmk:backtoLin} We observe that, in all the steps of the proof of Theorem \ref{thm:instability_criterion}, the operator $\cH_u$ (and correspondingly the full $\cH$) does not need to be self-adjoint but one can relax the assumptions to operators of the form $\uno+\cK$ with $\cK$ compact. This is the key to construct a bridge between our theory and the instability criterion established by Lin in \cite[Theorem 1.2]{LinCMP2004}, later revisited in \cite{latushkin2018eigenvalues}.  
They consider a general 
2D-Euler steady state in the form 
$\omega_0 = -g(\psi_0) $, with $\psi_0 := \Delta^{-1} \omega_0$ and a smooth function $g$. The linearized Euler operator at $\omega_0$ is given by
$$\cL_{0} :=  -\{\psi_0, \uno  + g'(\psi_0) \Delta^{-1}\} =  \cJ_0 \cH_0\, ,$$ 
with $\cJ_0 := -\{\psi_0,\cdot\}
$  and 
$ \cH_0 := \uno + g'(\psi_0) \Delta^{-1}
$. Unless $g$ is strictly monotone,  the operator $\cL_0$ is no longer Hamiltonian in the sense of Definition \ref{def:Hamiltonian}.
Nonetheless, by splitting 
$\cH$ into 
\begin{equation}
    \label{splittingLin}
\cH_s := \uno\,, \quad \cH_u := g'(\psi_0) \Delta^{-1}\, ,
\end{equation}
it is not hard to adapt the proof of Theorem \ref{thm:instability_criterion} and obtain
that $\lambda \in \CC\setminus \im \RR$ is an $L^2(\TT^2)$-eigenvalue of $\cL_0$  if and only if the operator
$$  \widetilde\cM_\lambda:= \uno + g'(\psi_0) (\cJ_0-\lambda)^{-1} \cJ_0 \Delta^{-1} 
$$ 
has nontrivial kernel. 
Note that to obtain the statement above 
we used that $\cJ_0$ and $g'(\psi_0)$ commute.
{Let us observe, moreover, that  $ \widetilde\cM_\lambda$ is not a finite-dimensional operator, in contrast to Theorem \ref{thm:instability_criterion}, but a compact perturbation of the identity. Indeed,}
the operator $\uno - \widetilde\cM_\lambda$ is exactly the (Birman-Schwinger type) compact operator  $K_\lambda(0)$  used in \cite{latushkin2018eigenvalues}. Here the unstable spectrum of $\cL_0$ is related to the zero set of the two-modified Fredholm determinant of $\widetilde\cM_\lambda$. A direct consequence of such interpretation is that if $\widetilde\cM_0$, defined according to  Lemma \ref{lem:strong_convergence} as
$$
\widetilde\cM_0:=\lim_{\lambda \to 0^+} \widetilde \cM_\lambda = \uno +  g'(\psi_0)\big(\uno - P_{\rm Ker(\cJ_0)}\big)\Delta^{-1}  \, ,
$$
 has an odd number of negative directions, then there exist an unstable real eigenvalue of $\cL_0$. This is the analogue of Corollary \ref{lem:weakinstability}. 
\end{remark}

\subsection{Orbital stabilty
}

{
We now give a general criterion to establish the orbital stability of a Hamiltonian system $\de_t f=\cL f$ with $\cL=\cJ\cH$ as in Definition \ref{def:Hamiltonian}. To this end, one has to exclude the presence of unstable eigenvalues for $\cL$ as well as Jordan blocks, thus indicating stronger stability properties for the Hamiltonian system at hand. Under some quantitative hypotheses on $\cL$, we are able to obtain the following result.
}
\begin{proposition}\label{prop:orbstabgeneral} Let $\cL = \cJ \cH$ be a Hamiltonian operator as in Definition \ref{def:Hamiltonian}. Let $ {\rm X}= {\rm V}_s \oplus {\rm V}_u$, where $ {\rm V}_s$ and $ {\rm V}_u$ are orthogonal subspaces, invariant for $\cH$ with
\begin{equation}\label{def:b1b2}
\begin{aligned}
   b_1 &:= \inf \big\{ \langle \cH \omega , \omega \rangle :  \omega \in {\rm V}_s\,,\ \|\omega\|_{\rm X} = 1 \big\} \in (0, \infty) \,,\\
    b_2 &:= \sup \big\{ |\langle \cH \omega , \omega \rangle| :  \omega \in {\rm V}_u\,,\ \|\omega\|_{\rm X} = 1 \big\} \in (0, \infty) \,.
    \end{aligned}
\end{equation}
Let $P$ be the orthogonal projection onto the kernel of $\cJ$, and suppose that for some $b_3 \in (0,1]$ we have
\begin{equation}\label{def:b3}
    \| P \omega \|_{\rm X}^2 \geq b_3 \| \omega \|_{\rm X}^2\,, \quad \text{for every }\omega \in \rm{V}_u\, .
\end{equation}
Then, if 
\begin{equation}
    \label{bd:constats} {(1-b_3) b_2} < {b_3^2 b_1}\,,
\end{equation}
the operator $\cL$ is orbitally stable. More precisely, there exists a constant $C>0$ such that, for every $\omega_0\in\rm{X}$, the solution of the linear system $\de_t \omega(t) = \cL \omega(t)$ starting at $\omega(0) = \omega_0$ fulfills 
\begin{equation*}
\|\omega(t) \|_{\rm X} \leq C \| \omega_0 \|_{\rm X}\,,\quad \text{for every }t \in \RR\, .
\end{equation*} 
\end{proposition}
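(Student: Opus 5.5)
The plan is an energy--Casimir argument with two conserved quantities of $\partial_t\omega=\cL\omega$, combined through the quantitative hypotheses \eqref{def:b1b2}--\eqref{bd:constats}.

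\emph{Conserved quantities.} The first is the quadratic form $q_\cH(\omega)=\langle\cH\omega,\omega\rangle_{\rm X}$. Formally,
\[
\tfrac{d}{dt}q_\cH(\omega)=2\Re\langle \cH\cJ\cH\omega,\omega\rangle=2\Re\langle\cJ\cH\omega,\cH\omega\rangle=0
\]
by skew-adjointness of $\cJ$. The second is the projection $P\omega$ onto $\Ker(\cJ)$. For any $k\in\Ker(\cJ)$,
\[
\tfrac{d}{dt}\langle\omega,k\rangle=\langle\cJ\cH\omega,k\rangle=-\langle\cH\omega,\cJ k\rangle=0,
\]
so $P\omega(t)=P\omega_0$ for all $t$. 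I would first check these identities for $\omega_0$ in a dense subset of the domain, for which the solution is classical. I would then extend them to all $\omega_0\in{\rm X}$ by density, since the bound obtained below is uniform in $t$. This also covers $t<0$, because the same argument applies to the time-reversed equation.

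\emph{Splitting and the key inequality.} Write $\omega=\omega_s+\omega_u$ with $\omega_s\in{\rm V}_s$ and $\omega_u\in{\rm V}_u$. Because both subspaces are $\cH$-invariant and orthogonal, the cross terms vanish and
\[
q_\cH(\omega_0)=q_\cH(\omega)\geq b_1\|\omega_s\|^2-b_2\|\omega_u\|^2 .
\]
The main step is to control $\|\omega_u\|$ by the conserved quantity $P\omega_0$ and a small multiple of $\|\omega_s\|$. Using \eqref{def:b3},
\[
b_3\|\omega_u\|^2\le \langle P\omega_u,\omega_u\rangle=\langle P\omega,\omega_u\rangle-\langle P\omega_s,\omega_u\rangle .
\]
Since $\omega_s\perp\omega_u$, we have $\langle P\omega_s,\omega_u\rangle=\langle\omega_s,(P-\uno)\omega_u\rangle$. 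Moreover $\|(\uno-P)\omega_u\|^2=\|\omega_u\|^2-\|P\omega_u\|^2\le(1-b_3)\|\omega_u\|^2$. Dividing by $\|\omega_u\|$ then gives
\[
b_3\|\omega_u\|\le \|P\omega_0\|+\sqrt{1-b_3}\,\|\omega_s\| .
\]
Squaring with Young's inequality yields, for any $\varepsilon>0$,
\[
b_3^2\|\omega_u\|^2\le(1+\varepsilon)(1-b_3)\|\omega_s\|^2+C_\varepsilon\|P\omega_0\|^2 .
\]

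\emph{Conclusion.} Inserting this into the lower bound for $q_\cH$ gives
\[
q_\cH(\omega_0)\ge\Big(b_1-\tfrac{(1+\varepsilon)(1-b_3)b_2}{b_3^2}\Big)\|\omega_s\|^2-C'_\varepsilon\|P\omega_0\|^2 .
\]
By \eqref{bd:constats} the bracket is positive once $\varepsilon$ is small enough. Since $\cH$ is bounded, $|q_\cH(\omega_0)|\le\|\cH\|\,\|\omega_0\|^2$, and therefore $\|\omega_s(t)\|\le C\|\omega_0\|$. The previous inequality then also gives $\|\omega_u(t)\|\le C\|\omega_0\|$, which proves the claim.

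The only genuinely delicate point is the estimate of $\|\omega_u\|$ via the conserved projection. The hypothesis \eqref{bd:constats} is exactly what this estimate produces, provided one uses the orthogonality trick $\langle P\omega_s,\omega_u\rangle=\langle\omega_s,(P-\uno)\omega_u\rangle$ rather than the crude bound $\|P\omega_s\|\le\|\omega_s\|$. The approximation argument needed to justify the conservation laws for general $\omega_0$ is routine by comparison.
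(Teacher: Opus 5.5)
Your proof is correct and rests on the same ingredients as the paper's first proof: conservation of $\langle\cH\omega,\omega\rangle$ and of $P\omega$, the splitting $\omega=\omega_s+\omega_u$, and the identity $\langle P\omega_s,\omega_u\rangle=\langle\omega_s,(P-\uno)\omega_u\rangle$. The difference is that you use the two invariants separately, in effect running the inequalities of the paper's alternative spectral proof along the flow with $P\omega_0$ and $\langle\cH\omega_0,\omega_0\rangle$ as source terms, instead of combining them into the single coercive functional $B(\omega)=\langle\cH\omega,\omega\rangle+\gamma\|P\omega\|^2$ and optimizing over $\alpha,\gamma$; this gives the uniform bound more directly and shows why \eqref{bd:constats} is the threshold.
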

We note that the spaces ${\rm V}_s$ and ${\rm V}_u$ can be explicitly defined through the spectral projection theorem for the bounded self-adjoint operator $\cH$, meaning that condition \eqref{def:b1b2} is not restrictive. On the other hand, condition \eqref{def:b3} is in general nontrivial to verify.
We present two different proofs of the result. In the first one we explicitly construct a Lyapunov functional for the linear dynamics, whereas the second proof is based on spectral arguments.
\begin{figure}[ht]
    \centering

\begin{tikzpicture}[
  x=1cm,y=1cm,
  line cap=round, line join=round,
  scale=0.65, transform shape
]
  \def\xmax{6.2}
  \def\ymax{3.6}
  \def\a{0.56}   
  \def\mB{3.1}   
  \def\mU{-0.27} 

  \colorlet{cGreen}{red!70!black}
  \colorlet{cRed}{green!45!black}

  \colorlet{cBlue}{blue!70!black}
  \colorlet{cBrown}{brown!70!black}
  \colorlet{cBound}{cGreen}
  \colorlet{cDash}{orange!70!black}

  \tikzset{
    ax/.style={line width=0.55pt},
    main/.style={line width=0.85pt},
    bound/.style={draw=cBound, line width=0.75pt},
    dashbound/.style={draw=cDash, line width=0.75pt, dashed},
    lab/.style={font=\small, inner sep=1.2pt},
    labG/.style={lab, text=cGreen},
    labR/.style={lab, text=cRed},
    labB/.style={lab, text=cBlue},
    labK/.style={lab, text=black},
  }

  \clip (-\xmax,-\ymax) rectangle (\xmax,\ymax);

  \fill[cGreen!10]
    (0,0) -- (\xmax, \a*\xmax) -- (\xmax,-\a*\xmax) -- cycle;
  \fill[cGreen!10]
    (0,0) -- (-\xmax,\a*\xmax) -- (-\xmax,-\a*\xmax) -- cycle;

  \begin{scope}
    \fill[cRed!7] (-\xmax,-\ymax) rectangle (\xmax,\ymax);
    \fill[cGreen!10]
      (0,0) -- (\xmax, \a*\xmax) -- (\xmax,-\a*\xmax) -- cycle;
    \fill[cGreen!10]
      (0,0) -- (-\xmax,\a*\xmax) -- (-\xmax,-\a*\xmax) -- cycle;
  \end{scope}

  \draw[ax, draw=cGreen] (-\xmax,0) -- (\xmax,0);
  \draw[ax, draw=cRed]   (0,-\ymax) -- (0,\ymax);

  \draw[bound] (-\xmax, \a*\xmax) -- (0,0) -- (\xmax,-\a*\xmax);
  \draw[bound] (-\xmax,-\a*\xmax) -- (0,0) -- (\xmax, \a*\xmax);

  \draw[dashbound] (-\xmax, \a*\xmax) -- (0,0) -- (\xmax,-\a*\xmax);
  \draw[dashbound] (-\xmax,-\a*\xmax) -- (0,0) -- (\xmax, \a*\xmax);

 
  \draw[main, draw=cBlue] (-\xmax,-\mB*\xmax) -- (\xmax,\mB*\xmax);

  \draw[main, draw=black] (-\xmax,-\mU*\xmax) -- (\xmax,\mU*\xmax);

  \fill[black] (0,0) circle (1.15pt);

  \node[labG, anchor=west] at ($(0,0)+(2.55,0.25)$) {$V_u$};
  \node[labG, anchor=west] at ($(0,0)+(3.1,1.1)$)
    {$\bigl|\langle \mathcal H\cdot,\cdot\rangle\bigr|\le b_2$};

  \node[labR, anchor=east] at ($(0,0)+(-0.1,1.55)$) {$V_s$};
  \node[labR, anchor=east] at ($(0,0)+(-1,2.10)$)
    {$\langle \mathcal H\cdot,\cdot\rangle>b_1$};

  \node[labG, anchor=west] at ($(0,0)+(4.00,-0.3)$)
    {$\langle P\cdot,\cdot\rangle>b_3$};

  \pgfmathsetmacro{\xB}{\ymax/\mB}

  \path (-\xB,-\ymax) -- (\xB,\ymax)
    node[labB, sloped, pos=0.8, below]
    {$\mathrm{Ker}(P)=\overline{\mathrm{Ran}(\mathcal{J})}$};

  \path (-\xmax,-\mU*\xmax) -- (\xmax,\mU*\xmax)
    node[labK, sloped, pos=0.73, below]
    {$\operatorname{Ran} (P)=\operatorname{Ker}(\mathcal J)$};

\end{tikzpicture}
\caption{Visual representation of the spaces $\mathrm{V}_s$, $\mathrm{V}_u$, $\mathrm{Ran}(P)$ and $\mathrm{Ker}(P)$ in Proposition \ref{prop:orbstabgeneral}, whose hypotheses ensure that the space $\overline{\Rn(\cJ)}$, represented by the blue line and containing all the possible growing modes, lies entirely in the stable cone $\langle \cH \cdot, \cdot\rangle > b_1 $. }
\end{figure}
\begin{proof} The proof is based on the conservation of the following bilinear form
\begin{equation*}
B(\omega) = \langle \cH \omega, \omega \rangle
+ \gamma \langle P \omega , P \omega \rangle
\end{equation*}
where we recall that $P$ is the projection to $\mathrm{Ker}(\cJ)$.
It is not hard  to see that if $\de_t\omega = \cJ \cH \omega$, then both $P\omega$ and $\langle \cH \omega, \omega \rangle$ are conserved under the linear flow, so $B(\omega)$ is conserved. Thus, the statement follows if we are able to show that $B(\omega) \gtrsim \| \omega \|^2$. We decompose $\omega = \omega_u + \omega_s$ with $\omega_u \in \rm{V}_u$ and $\langle  \omega_s, \omega_u \rangle = 0$. By \eqref{def:b1b2}, we bound 
\begin{align}\label{eqn:plugin}
    B(\omega_u+\omega_s)&\geq\, b_1\|\omega_s\|^2+\gamma \|P\omega_s\|^2+\gamma \|P\omega_u\|^2-b_2\|\omega_u\|^2-2\gamma|\langle P\omega_u,P\omega_s\rangle|
    .
\end{align}
Since $P^2=P$, and $\langle \omega_s,\omega_u\rangle=0$, we have that 
\begin{align}
    \langle P\omega_u,P\omega_s\rangle=\langle (P-\uno)\omega_u,\omega_s\rangle.
\end{align}
From \eqref{def:b3} and the fact that $\|\omega\|^2{=}\|P\omega\|^2+\|(\uno-P)\omega\|^2$, we have 
$   \|(\uno-P)\omega_u\|\leq {\sqrt{1-b_3}}\|\omega_u\|.$
Let $\alpha>0$ be a parameter to be defined later on and estimate, using also Young inequality,
\begin{align*}
    2\gamma |\langle P\omega_u,P\omega_s\rangle |&= 2(\gamma-\alpha) |\langle P\omega_u,P\omega_s\rangle|+2\alpha|\langle (\uno-P)\omega_u,\omega_s\rangle|
    \\&
    \leq 2|\gamma-\alpha|\| P\omega_u\|\|P\omega_s\|+2\alpha \sqrt{{1-b_3}}\|\omega_u\|\|\omega_s\|
    \\&\leq \gamma \|P\omega_s\|^2+
    \frac{(\gamma- \alpha)^2}{\gamma} \|P\omega_u\|^2+    
   \alpha^2\frac{1-b_3}{b_1(1-\gamma^{-1})} \|\omega_u\|^2
    +
    b_1(1-\gamma^{-1})\|\omega_s\|^2
    .
\end{align*}
Plugging it in \eqref{eqn:plugin} and using the lower bound $\|P\omega_u\|^2 \geq b_3 \|\omega_u\|^2$ in \eqref{def:b3} we get
\begin{align*}
    B(\omega_u+\omega_s)\geq\,& \frac{b_1}{\gamma}\|\omega_s\|^2+\Big(2\alpha b_3-b_2-\alpha^2\frac{1-b_3}{b_1(1-\gamma^{-1})}-\frac{\alpha^2b_3}{\gamma}\Big) \|\omega_u\|^2.
\end{align*}
Optimizing the choice of $\alpha$, we set $\alpha:=\dfrac{{b_1} {b_3} (\gamma-1) \gamma}{{b_1} {b_3} (\gamma-1)-({b_3}-1) \gamma^2}$. The coercivity of $B$ is given by
\begin{equation}
\frac{ \left({b_1} b_3^2-{b_2} (1-{b_3})\right){\gamma}^2-b_1 b_3 (b_2- {b_3}){\gamma} +{b_1} {b_2} {b_3}}{(1-{b_3}) {\gamma}^2+{b_1} {b_3} ({\gamma}-1)} >0
 .
\end{equation}
In view of \eqref{bd:constats}, the inequality above is true for $\gamma$ sufficiently large.
\end{proof}
\begin{proof}[Alternative proof] 
Let $(f,\lambda)$ be an eigenpair of $\cL$, namely $\cJ \cH f = \lambda f$ and let us suppose that $f$ is orbitally unstable, namely:
$$
\text{\it either}\quad ({\bf A})\ \ \Re\, \lambda \neq 0\quad \text{\it or}\quad ({\bf B})\ \ \Re\, \lambda = 0 \text{ and there exists } g\in {\rm X} \text{ such that }\cJ \cH g = \lambda g + f\,.
$$
Then, in both cases:
\begin{enumerate}
    \item $Pf = 0$, because $f$ belongs to the range of $\cJ$;
    \item $\langle \cH f ,f \rangle=0 $, by Lemma \ref{lemma:uniqueness_algebra} in case ({\bf A}) or, in case ({\bf B}),  because $\lambda =- \overline{\lambda}$ and
    $$
 \lambda \langle f, \cH g \rangle = \langle \cJ \cH f, \cH g \rangle = - \langle \cH f,\cJ  \cH g \rangle = - \overline{\lambda} \langle f, \cH g \rangle - \langle \cH f,  f \rangle\, .
$$
\end{enumerate}
 Let us decompose $f=u+s$, with $u \in \mathrm{V}_u$ and $s\in \mathrm{V}_s$. The two properties above give
$$
\mathrm{(i)}\ Pu = - Ps \, ,\quad \mathrm{(ii)}\ \langle \cH s,s\rangle = - \langle \cH u,u\rangle\,.
$$
From $\mathrm{(ii)}$ and \eqref{def:b1b2} we have that $b_1 \|s\|^2 \leq b_2 \|u\|^2 $, whereas from $\mathrm{(i)}$ and \eqref{def:b3} we obtain
$$
b_3 \|u\|^2 \leq \|Pu\|^2 = \langle Pu,u \rangle \stackrel{\mathrm{(i)}}{=} - \langle Ps,u \rangle = \langle  s, (\uno - P)u \rangle \leq \|s\| \|(\uno - P) u\| \leq  \sqrt{1-b_3} \|s\| \| u\|\,,
$$
where the last inequality descends from $\|Pu\|^2 \geq b_3 \|u\|^2 $. In the end we have
$$
b_1 \|s\|^2 \leq b_2 \|u\|^2\,,\quad b_3^2 \|u\|^2 \leq (1-b_3) \|s\|^2\, ,
$$
namely $\|s\|^2 \leq \frac{b_2(1-b_3)}{b_1 b_3^2} \|s\|^2  < \|s\|^2 $, by hypothesis. This is a contradiction.
\end{proof}

\section{Stable invariant subspaces of the Taylor-Green vortex} \label{sec:stability}
This section is devoted to detecting all the stable $\cL_E$-invariant subspaces of $L^2_0(\TT^2)$.
\subsection{Invariant subspaces of $\cL_E$} 
Let us consider the following splitting of $L^2_0(\TT^2)$ into
\begin{equation}\label{EvOddsplitting}
L^2_0(\TT^2) =  \mathrm{Odd}_x\mathrm{Odd}_y  \oplus   \mathrm{Odd}_x\mathrm{Ev}_y \oplus
  \mathrm{Ev}_x\mathrm{Odd}_y \oplus  \mathrm{Ev}_x\mathrm{Ev}_y
\,,
\end{equation}
where the first subspace is given by $L^2_0(\mathbb T^2)$ functions that are odd in both variables and the other subspaces are similarly defined. The operator $\cH$ in \eqref{cJcH} is invariant on the four spaces, since the Laplace operator keeps the parity of the function to which it is applied. The same holds for $\cJ$ that we can regard as a closed and densely-defined operator on each of the four subspaces. Indeed
\begin{equation}
    \{ \sin(x) \sin(y), g(x,y)\} \stackrel{\eqref{PoissonBracket}}{=} \cos(x) \sin(y) \de_y g(x,y) - \cos(y) \sin(x) \de_x g(x,y) \,, 
\end{equation}
and we readily see that the two terms on the right-hand side keep the parity of $g$ in both variables.
As a consequence, the operator $\cL_E$ in \eqref{cL} is invariant and Hamiltonian on the four subspaces in \eqref{EvOddsplitting}.

Moreover, we can find further invariant subspaces of each of them by considering the following additional splitting. Let $\mrA_x \mrB_y$, $\{\mrA,\mrB\} \subseteq \{\mathrm{Ev},\mathrm{Odd}\}$, be one of the four spaces in \eqref{EvOddsplitting}. Any function $g\in \mrA_x \mrB_y$ admits a representation in Fourier series as 
\begin{equation}\label{FourierRepresentation}
    g(x,y) = \sum_{j,k\in \NN} g_{j,k} \varsigma_{\mrA}(jx) \varsigma_{\mrB}(ky)\,,
\end{equation}
where  $\varsigma_{\mathrm{Ev}}(t) := \cos(t)$ and $\varsigma_{\mathrm{Odd}}(t) := \sin(t)$. The additional splitting is then given by
\begin{equation}\label{harmonicsplitting}
  \mrA_x \mrB_y = (\mrA_x \mrB_y )^{[\mathrm{ev}]} \oplus (\mrA_x \mrB_y )^{[\mathrm{odd}]}\,,
\end{equation}
with the two spaces respectively given by functions $g$ of the form \eqref{FourierRepresentation} where $g_{j,k}=0$ if $j+k$ is odd (for the first one) or even (for the second). Again we can readily observe that the two subspaces are invariant for the operator $\cH$ in \eqref{cJcH}. On the other hand, we observe that
\begin{align}\label{actionJonharmonics}
    &\big\{ \sin(x) \sin(y), \varsigma_\mrA(jx) \varsigma_\mrB(ky) \big\} \\ \notag &\quad\stackrel{\eqref{PoissonBracket}}{=} \cos(x)  \varsigma_\mrA(jx)\,  \sin(y) \big(\varsigma_\mrB(ky)\big)' -  \sin(x)\big(\varsigma_\mrA(jx)\big)'\,  \cos(y) \varsigma_\mrB(ky) \,,
\end{align}
resulting in a linear combination of the functions 
$$
\begin{aligned}
\varsigma_\mrA\big((j-1)x) \varsigma_\mrB\big((k-1)y)\,,\quad \varsigma_\mrA\big((j-1)x) \varsigma_\mrB\big((k+1)y)\,,\\
\varsigma_\mrA\big((j+1)x) \varsigma_\mrB\big((k-1)y)\,,\quad \varsigma_\mrA\big((j+1)x) \varsigma_\mrB\big((k+1)y)\,.
\end{aligned}
$$ 
In each of these four functions the sum of the activated harmonics has the same parity as $j+k$. The latter argument shows that also the operator $\cJ$ in \eqref{cJcH} is invariant on the two subspaces in the right-hand side of \eqref{harmonicsplitting}. Thus $\cL_E$ as a whole is invariant and Hamiltonian on the additional splitting.

Now, we can rule out the presence of instabilities on several of those subspaces by noticing the absence of negative directions of $\cH$.

\begin{lemma}\label{stability_easy_cases}
The spectrum of the operator $\cL_E $ in \eqref{cL} restricted to $(\mathrm{Odd}_x\mathrm{Odd}_y)^{[\mathrm{odd}]}$ and to $(\mathrm{A}_x\mathrm{B}_y)^{[\mathrm{even}]}$ for any $\mathrm{A}, \mathrm{B} \in \{ \mathrm{Ev}, \mathrm{Odd} \}$ is purely imaginary.
\end{lemma}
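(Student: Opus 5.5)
The plan is to apply Lemma~\ref{lemma:uniqueness_algebra} on each of the listed invariant subspaces. It suffices to check that the restriction of $\cH=\uno+2\Delta^{-1}$ to each of them has no negative eigenvalues. As observed above, the splittings \eqref{EvOddsplitting} and \eqref{harmonicsplitting} are invariant for both $\cJ$ and $\cH$. Hence the restriction of $\cL_E$ to each such subspace is again Hamiltonian in the sense of Definition~\ref{def:Hamiltonian}: $\cJ$ restricts to a closed, densely defined, skew-adjoint operator, and $\cH$ to a bounded self-adjoint one. Lemma~\ref{lemma:uniqueness_algebra} with $k=0$ then gives that any eigenvalue has zero real part. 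By the symmetry $\lambda\mapsto-\bar\lambda$ of the Hamiltonian spectrum, which follows from the same computation as in \eqref{eq:H_orthogonal}, this excludes the left half-plane as well.

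The key step is to locate the negative subspace $\mathrm{N}$ in \eqref{negativedirectionscH}. In the Fourier representation \eqref{FourierRepresentation}, $\cH$ acts diagonally with multiplier $1-2/(j^2+k^2)$ on $\varsigma_\mrA(jx)\varsigma_\mrB(ky)$. This multiplier is negative exactly when $j^2+k^2<2$, i.e.\ for $(j,k)\in\{(1,0),(0,1)\}$, since $(0,0)$ is excluded by the zero average. Both of these modes have $j+k=1$ odd, so they never belong to a $[\mathrm{ev}]$ subspace. Moreover, the mode with $k=0$ requires $\mrB=\mathrm{Ev}$, and the mode with $j=0$ requires $\mrA=\mathrm{Ev}$, because $\sin(0\cdot y)=0$. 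Hence in $(\mathrm{Odd}_x\mathrm{Odd}_y)$ neither mode is present, and in particular none lies in $(\mathrm{Odd}_x\mathrm{Odd}_y)^{[\mathrm{odd}]}$. On each of these subspaces the multiplier is therefore $\ge 0$, vanishing only on modes with $j^2+k^2=2$, i.e.\ on $\mathrm{K}$, so $\cH\ge0$.

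The only remaining point is to dispose of $\sigma(\cL)$ beyond eigenvalues, since Lemma~\ref{lemma:uniqueness_algebra} only controls eigenvalues. I expect this to be the one mildly delicate step. I would handle it directly by showing that $\cL-\lambda$ is invertible for $\Re\lambda\neq0$. One route is Theorem~\ref{thm:instability_criterion} with $\cH_u$ the projection onto $\mathrm{K}$, times $-1$ (or simply $\cH_u=0$ after absorbing the kernel). An alternative is to note that $\cL$ vanishes on $\mathrm{K}\cap$(subspace), since $\mathrm{K}\subset\Ker\cH$. On the $\cH$-invariant complement $\mathrm{K}^\perp$, where $\cH\ge\frac35$ by \eqref{coercivewhole}, we have $\cL=\cH^{-1/2}\big(\cH^{1/2}\cJ\cH^{1/2}\big)\cH^{1/2}$ up to the kernel part. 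Here $\cH^{1/2}\cJ\cH^{1/2}$ is skew-adjoint, so its spectrum is imaginary, and conjugation by the bounded invertible $\cH^{1/2}$ preserves the spectrum. Concretely, for $\Re\lambda\neq0$ one solves $(\cL-\lambda)f=g$ by splitting $f$ along $\mathrm{K}\oplus\mathrm{K}^\perp$. The $\mathrm{K}$-component gives $-\lambda f_K=g_K-\Pi_K\cJ\cH f_\perp$, and the $\mathrm{K}^\perp$ block is invertible by the similarity argument above. This yields a bounded resolvent, and therefore $\sigma(\cL_E)\subset\im\RR$ on all the listed subspaces.
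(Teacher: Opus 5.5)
Your proposal is correct and follows the paper's proof, which simply notes that none of the negative directions in \eqref{negativedirectionscH} lie in these subspaces and applies Lemma~\ref{lemma:uniqueness_algebra} with $k=0$. Your additional block-triangular resolvent argument along $\mathrm{K}\oplus\mathrm{K}^\perp$ covers the non-eigenvalue part of the spectrum and the left half-plane, which the paper leaves implicit in the ``in particular'' clause of Lemma~\ref{lemma:uniqueness_algebra}; your first route via Theorem~\ref{thm:instability_criterion} would not do this, since that theorem only identifies kernels.
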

\begin{proof} 
The four eigenvectors of $\cH$ with negative eigenvalues were given in \eqref{negativedirectionscH}. We notice that none of them are in the subspaces considered above. Therefore, the result follows directly from Lemma \ref{lemma:uniqueness_algebra} with $k = 0$.
\end{proof}

We now turn our attention to the remaining $\cL_E$-invariant
subspaces where the stability properties cannot be trivially determined because of the presence of the negative directions of $\cH$ from \eqref{negativedirectionscH}.

\subsection{Stability of odd perturbations} \label{subsec:difficult_stability}

Let us consider the spaces $(\mathrm{Ev}_x\mathrm{Odd}_y)^{[\mathrm{odd}]}$ and $(\mathrm{Odd}_x\mathrm{Ev}_y)^{[\mathrm{odd}]}$ in \eqref{harmonicsplitting}. 
Let us first observe that the two spaces are conjugated by the linear involution 
\begin{equation}
    \label{sigmaoperator}
  \varrho : L^2_0(\TT^2) \to L^2_0(\TT^2)\,,\quad  \varrho f(x,y) := f(y,x)\,,\quad \varrho^2=\uno\, .
\end{equation}
In view of \eqref{cJcH}, one verifies that
\begin{equation}\label{signswitch}
    \varrho \cH = \cH \varrho\,,\quad \varrho \cJ = - \cJ \varrho \, .
\end{equation}
As consequence,
\begin{equation}\label{eqn:studiamounosolo}
    \cL_E|_{(\mathrm{Odd}_x\mathrm{Ev}_y)^{[\mathrm{odd}]}} = - \varrho \cL_E|_{(\mathrm{Ev}_x\mathrm{Odd}_y)^{[\mathrm{odd}]}} \varrho^{-1} \, ,
\end{equation}
hence the spectrum of $\cL_E$ is the same on  the two invariant subspaces.
By \eqref{negativedirectionscH}, the two subspaces contain each one negative direction of $\cH$. In this case, we prove stability as an application of our Theorem \ref{thm:instability_criterion}. To verify the criterion, we reduce the problem to a precise lower bound of $\| P  E_u \|_{L^2}^2$, (see \eqref{eq:final_condition_stability}), that in turn we verify by using properties of the period function in \eqref{eq:period_def}. Such properties are well studied and have been also used recently in \cite{brue2024enhanced} to prove mixing-estimates for the cellular flow.  
\begin{proposition}\label{prop:OddEvstability}
    The spectrum of the operator $\cL_E $ in \eqref{cL} restricted to each of its invariant subspaces 
    $(\mathrm{Ev}_x\mathrm{Odd}_y)^{[\mathrm{odd}]} $ and $(\mathrm{Odd}_x\mathrm{Ev}_y)^{[\mathrm{odd}]}$  in \eqref{harmonicsplitting} is purely imaginary.
\end{proposition}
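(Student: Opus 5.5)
We reduce to a single subspace and argue that any unstable eigenvalue there would have to be real and positive. We then exclude such an eigenvalue with the monotonicity of $\Phi$ (Proposition~\ref{prop:propertiesofPhi}) and a lower bound on $\Phi(0)$ (Remark~\ref{rem:n1phi0bis}).

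\textbf{Reduction to a real unstable eigenvalue.} By the conjugation \eqref{eqn:studiamounosolo}, it suffices to treat $X:=(\mathrm{Ev}_x\mathrm{Odd}_y)^{[\mathrm{odd}]}$, whose Fourier modes are $\cos(jx)\sin(ky)$ with $j+k$ odd. From \eqref{negativedirectionscH} and \eqref{kernelcH}, the only negative direction of $\cH$ in $X$ is $E_u=\sin(y)$, with $\cH E_u=-E_u$, and $\cH$ has no kernel in $X$ (the kernel mode $\cos x\sin y$ has $j+k$ even). By Lemma~\ref{lemma:uniqueness_algebra}, $\cL_E|_X$ has at most one eigenvalue with positive real part. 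Since $\cL_E$ is a real operator, its spectrum is invariant under complex conjugation, so this eigenvalue must be real and positive. The Hamiltonian symmetry $\lambda\mapsto-\bar\lambda$ then handles the left half-plane.

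\textbf{Choice of splitting.} We take ${\rm V}_u={\rm span}\{E_u\}$ and set $\cH_s:=\cH$ on ${\rm V}_u^\perp$, $\cH_s E_u=s_uE_u$ with $s_u:={\rm c}(\cH|_{{\rm V}_u^\perp})$, and $\cH_u:=-(1+s_u)\,E_u\otimes E_u/\|E_u\|^2$, so that $h_u=1+s_u$. The constant $s_u$ is read off from the Fourier multiplier $1-2/(j^2+k^2)$ on modes in $X$ other than $(0,1)$. The smallest admissible value of $j^2+k^2$ is $5$, at $(1,2)$ or $(2,1)$, so $s_u=3/5$ and $h_u=8/5$. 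The hypotheses of Theorem~\ref{thm:instability_criterion} hold with $n=1$ and $h_u>0$. Proposition~\ref{prop:propertiesofPhi} then says $\Phi$ is increasing on $(0,\infty)$, so it suffices to prove $\Phi(0)>0$. By \eqref{eq:achoice},
\[
\Phi(0)\ \ge\ 1+\tfrac{8}{3}\Big(\tfrac{\|PE_u\|^2}{\|E_u\|^2}-1\Big),
\]
so everything reduces to the quantitative bound
\begin{equation}\label{eq:final_condition_stability}
\|P\sin(y)\|_{L^2}^2\ >\ \tfrac58\,\|\sin(y)\|_{L^2}^2 .
\end{equation}
Here $P$ is the projection onto $\Ker(\cJ)$, given by the streamline average \eqref{eqn:projectionkerJ} for $\psi_E=-\sin x\sin y$.

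\textbf{Verifying the bound (main obstacle).} By symmetry it suffices to work on one cell $Q=(0,\pi)^2$, where $\sin y>0$ and the level sets $\{\sin x\sin y=h\}$, $h\in(0,1)$, are single closed curves $\Gamma(h)$. Define the period function $T(h)=\oint_{\Gamma(h)}d\sigma/|\nabla\psi_E|$ and $A(h)=\oint_{\Gamma(h)}\sin y\,d\sigma/|\nabla\psi_E|$. The coarea formula gives
\[
\|P\sin y\|_{L^2(Q)}^2=\int_0^1\frac{A(h)^2}{T(h)}\,dh,\qquad \|\sin y\|^2_{L^2(Q)}=\pi^2/2 .
\]
The reflection $x\leftrightarrow y$ preserves each $\Gamma(h)$, so $A(h)=\tfrac12\oint(\sin x+\sin y)\,d\sigma/|\nabla\psi_E|$. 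Parametrizing the streamline (for instance by $u=\sin x$) turns $T$ and $A$ into complete elliptic integrals in $h$. The plan is then to use known properties of the period function (monotonicity, logarithmic blow-up as $h\to0$, the limit value at $h\to1$) together with Cauchy--Schwarz-type lower bounds on $A^2/T$, or a rigorous one-dimensional quadrature, to show the integral exceeds $\tfrac58\cdot\tfrac{\pi^2}{2}$.

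I expect this step to be the delicate one. The margin is not obvious, since $P\sin y$ loses mass near the separatrix $h\to0$, where $\sin y$ is far from constant along the long streamlines. I would first try to estimate the region $h\ge h_0$, where streamlines are nearly circular around $(\pi/2,\pi/2)$ and $\sin y\approx 1$, from below explicitly. The contribution of $h<h_0$ would simply be discarded, and $h_0$ tuned so that the remaining integral already beats $5/8$. If that is too lossy, the fallback is interval arithmetic on the elliptic-integral expressions.
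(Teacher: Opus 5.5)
\textbf{There is a genuine gap: the decisive estimate is left as a plan, and the tools you name cannot close it.}

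Your reduction is the paper's, up to the swap $x\leftrightarrow y$ (the paper uses $E_u=\sin x$ on $(\mathrm{Odd}_x\mathrm{Ev}_y)^{[\mathrm{odd}]}$). A single negative direction forces any instability to be real. The splitting with $s_u=\frac35$, $h_u=\frac85$, together with Proposition~\ref{prop:propertiesofPhi} and \eqref{eq:achoice}, reduces everything to $\|P\sin y\|^2>\frac58\|\sin y\|^2$, i.e.\ $\int_0^1 A(h)^2/T(h)\,dh>\frac{5\pi^2}{16}$ on one cell. That inequality is the whole quantitative content of the proposition.

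Cauchy--Schwarz goes the wrong way: $A^2\le T\oint_{\Gamma(h)}\sin^2y\,\frac{d\sigma}{|\nabla\psi_E|}$ is an upper bound, just Bessel's inequality again. The natural pointwise bound on $\Gamma(h)$ is $\sin y=h/\sin x\ge h$, attained at $x=\pi/2$. It gives $A^2/T\ge h^2T(h)$. Since $T(h)=2\pi/M(1,h)\le 2\pi/\sqrt h$, where $M$ is Gauss' arithmetic--geometric mean, even keeping all $h\in(0,1)$ this yields at most $2\pi\int_0^1h^{3/2}\,dh=\frac{4\pi}{5}<\frac{5\pi^2}{16}$. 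So the route ``discard $h<h_0$ and use $\sin y\approx 1$ on nearly circular streamlines'' cannot succeed. Only your interval-arithmetic fallback remains, and it is not carried out.

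\textbf{The missing idea: $A(h)$ is exactly computable and constant.} Parametrize each of the two branches of $\Gamma(h)$ by $y$. Then $\sin y\,\frac{d\sigma}{|\nabla\psi_E|}=\frac{dy}{|\cos x|}=\frac{\sin y\,dy}{\sqrt{\sin^2y-h^2}}$, whose primitive is $-\arcsin\big(\cos y/\sqrt{1-h^2}\big)$. Hence
\[
A(h)=4\int_{\arcsin h}^{\pi/2}\frac{\sin y\,dy}{\sqrt{\sin^2y-h^2}}=2\pi \quad\text{for every } h\in(0,1),
\]
so $P\sin y=2\pi/T(h)$. The substitution $\cos y=\sqrt{1-h^2}\,\sin s$ gives $T(h)=4\int_0^{\pi/2}\frac{ds}{\sqrt{1-(1-h^2)\sin^2 s}}=\frac{2\pi}{M(1,h)}$. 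Therefore
\[
\|P\sin y\|^2_{L^2(Q)}=4\pi^2\int_0^1\frac{dh}{T(h)}=2\pi\int_0^1M(1,h)\,dh .
\]
Using $M(1,h)\ge\sqrt h$, the paper obtains $\|P\sin y\|^2_{L^2(Q)}\ge\frac{4\pi}{3}>\frac{5\pi^2}{16}$, a ratio of at least $\frac{8}{3\pi}\approx0.85>\frac58$. Even the crude bound $T(h)\le 2\pi/h$, read off from the last integral, already gives $\pi>\frac{5\pi^2}{16}$.

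Alternatively, your symmetrization $A=\frac12\oint(\sin x+\sin y)\,\frac{d\sigma}{|\nabla\psi_E|}$ would have sufficed with AM--GM instead of Cauchy--Schwarz. On $\Gamma(h)$ one has $\sin x+\sin y\ge 2\sqrt{\sin x\sin y}=2\sqrt h$, so $A\ge\sqrt h\,T$. Then $A^2/T\ge h\,T(h)\ge 2\pi h$, since $T\ge 2\pi$, and $\int_0^1 2\pi h\,dh=\pi>\frac{5\pi^2}{16}$.
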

\begin{proof}

\textbf{Step 1. Restriction to real eigenvalues and application of the stability criterion.}  In view of \eqref{eqn:studiamounosolo}, we restrict ourselves to the study of $(\mathrm{Odd}_x\mathrm{Ev}_y)^{[\mathrm{odd}]}$. 
Again, the latter space contains only one negative direction of $\cH$ in \eqref{cJcH}. This consideration alone justifies that
\begin{equation}
    \sigma_{(\mathrm{Odd}_x\mathrm{Ev}_y)^{[\mathrm{odd}]}}(\cL_E) \subseteq \RR \cup \im \RR
\end{equation}
Indeed, if a spectral value $\lambda$  is such that $\Re\, \lambda \neq 0$ (w.l.o.g.\ positive) and $\Im\, \lambda \neq 0$ then $\lambda$ would be an unstable eigenvalue and, by the reality of $\cL_E$, also $\bar \lambda$ would be a distinct unstable eigenvalue, which contradicts Lemma \ref{lemma:uniqueness_algebra}.

Now, we apply Theorem \ref{thm:instability_criterion}. In order to do that, we define $E_u = \sin(x)$, which spans the only negative direction of $\cH$ in $(\mathrm{Odd}_x\mathrm{Ev}_y)^{[\mathrm{odd}]}$. Then, according to Remark \ref{rem:n1phi0bis}, we set $s_u:=\frac35$ (which is the coercivity constant of $\cH$ on $\{E_u\}^\perp$ by \eqref{coercivewhole}), hence $h_u:=\frac85$ and we define the operators 
\begin{equation}
\cH_u f:= -\frac{8}{5} \langle E_u, f\rangle_{L^2} E_u , \qquad \cH_s := \cH - \cH_u.
\end{equation}
The hypotheses of Theorem \ref{thm:instability_criterion} are then clearly satisfied with $n = 1$, $s_u = \frac35$, $h_u = \frac85$. From \eqref{eq:achoice}, we readily deduce that
\begin{equation}
    \Phi(0)\geq \frac83\Big(\frac{\|PE_u\|^2}{\|E_u\|^2}-\frac58\Big).
\end{equation}
The proof of Proposition \ref{prop:OddEvstability} is then concluded if we can show that
\begin{equation} \label{eq:final_condition_stability}
\| P  E_u \|_{L^2}^2 > \frac58 \|E_u\|^2=\frac54 \pi^2.
\end{equation}

\textbf{Step 2. Computation of $\| P E_u \|_{L^2}$.} We consider the fundamental domain $\mathcal{C} = [0, \pi]^2$. We recall that the stream function of our Taylor-Green vortex is $\psi_E(x, y) = -\sin (x) \sin (y)$ and the operator $\cJ=-\{\psi_E,\cdot\}$. Then, for any $h \in (0, 1)$, the level set $\{ (x,y) \in \mathcal{C} : \sin(x)\sin(y) = h \}$ defines a closed streamline $\Gamma_h$, with normal vector ${\rm n} = (\cos (x) \sin(y), \sin (x) \cos( y) )$. We introduce the period function $T(h)$, defined as the derivative of the area enclosed by the streamline with respect to the energy $h$, or equivalently by the line integral:
\begin{equation} \label{eq:period_def}
    T(h) := \oint_{\Gamma_h} \frac{d\sigma}{|\nabla \psi_E|} = 4 \int_{\arcsin(h)}^{\pi/2} \frac{dx}{\sqrt{\sin^2(x) - h^2}}\,,
\end{equation}
where $d\sigma = \frac{| \nabla \psi_E | }{|\partial_y \psi_E |} dx $ denotes the arc-length measure. In the integral above, we notice that we are computing the contribution from $\Gamma_h \cap [0, \pi/2] \times [0, \pi/2]$ and multiplying by $4$ since the Taylor-Green vortex enjoys $90^\circ$-rotational symmetry around $(\pi/2, \pi/2)$. 

The projection $P$ onto the kernel of the transport operator $\cJ$ is the average over the streamlines $\Gamma_h$. For the unstable mode $E_u(x,y) = \sin(x)$, the projected function $P E_u$ is, remarkably, inversely proportional to the period, since by \eqref{eqn:projectionkerJ} we know that
\begin{equation} \label{eq:f0_bar_def}
    P {E}_u(h) = \frac{1}{T(h)} \oint_{\Gamma_h} E_u \frac{d\sigma}{|\nabla \psi|} = \frac{4}{T(h)} \int_{\arcsin(h)}^{\pi/2} \frac{\sin(x)}{\sqrt{\sin^2(x) - h^2}}\, dx  = \frac{2\pi}{T(h)}\,,
\end{equation}
where the last equality is justified substituting $u = \cos(x)$ to obtain that a primitive of the integrand is $-\arcsin\!\left(\frac{\cos x}{\sqrt{1-h^2}}\right)$.
{Now, we compute $T(h)$ explicitly. From \eqref{eq:period_def}, since $\sin^2(x)-h^2=(1-h^2)(1-\cos^2(x)/(1-h^2))$, we perform the change of variables $\cos(x)=\sqrt{1-h^2}\sin(s)$ to get that 
\begin{equation}
T(h)=4\int_0^{\pi/2}\frac{d s}{\sqrt{1-(1-h^2)\sin^2(s)}}.
\end{equation}
The integral above is proportional to the \emph{arithmetic-geometric mean} (AGM) iteration of Gauss, and from \cite[Theorem 1.1]{Borwein87} we know that
\begin{align}
\label{eq:AGM}
    \frac{1}{M(1,h)}=\frac{2}{\pi}\int_0^{\pi/2}\frac{ds}{\sqrt{1-(1-h^2)\sin^2(s)}}=\frac{T(h)}{2\pi},
\end{align}
where $M(1,h)$ is the AGM function\footnote{We recall that, given $0< b< a$, the function $M(a,b)$ is defined as follows. Let $a_{n+1}=(a_n+b_n)/2$ and $b_{n+1}=\sqrt{a_nb_n}$ with $a_1=a,b_1=b$. Then $M(a,b)=\lim_{n\to \infty}a_n=\lim_{n\to \infty}b_n$. Moreover, $b_n\leq M(a,b)\leq a_n$ for any $n\geq 1$.}. Then, to compute the norm of $PE_u$ we notice that the area in action-angle variables is given by $dx\,dy = T(h) dh\, d \theta$ (where $\theta$ is the normalized time along the streamline). Since  $\TT^2$ consists of four cells identical to $\mathcal{C}$, combining \eqref{eq:f0_bar_def} with \eqref{eq:AGM} we see that
\begin{equation} \label{eq:norm_explicit}
    \|P {E}_u\|_{L^2(\mathbb{T}^2)}^2 = 4 \int_{0}^{1}\int_0^1 |P {E}_u(h)|^2 T(h)\, dh d\theta = 8\pi\int_0^1M(1,h)dh,
\end{equation}
To bound the integral, we simply use the fact that, by the definition of $M(1,h)$, we know  $M(1,h)\geq \sqrt{h}$. Therefore 
\begin{align}\label{eqn:lowbdPEu}
    \|PE_u\|^2_{L^2(\TT^2)}\geq \frac{16}{3}\pi= \frac{8}{3\pi}\|E_u\|^2_{L^2(\TT^2)},
\end{align}
which proves the desired bound  \eqref{eq:final_condition_stability}.}
We finally comment that, numerically, $ \|P {E}_u\|^2/\|E_u\|^2\approx0.89$, and our lower bound gives $0.84$. This is a good approximation that suffices for our purposes.

\end{proof}

\begin{remark}[Orbital stability of the  Taylor-Green vortex in certain invariant spaces] 

Thanks to Proposition \ref{prop:orbstabgeneral} we can actually prove that on the space $(\mathrm{Odd}_x\mathrm{Ev}_y)^{[\mathrm{odd}]} $--and hence on the space $(\mathrm{Ev}_x\mathrm{Odd}_y)^{[\mathrm{odd}]} $ as well--the operator $\cL_E$ is orbitally stable.

 Indeed, keeping the notation introduced in Proposition~\ref{prop:orbstabgeneral}, we have that $b_2 = 1$ (greatest negative direction of $\cH$) and $b_1 = \frac35$ (coercivity constant of $\cH$ on the perpendicular modes). Thus, the condition \eqref{bd:constats} reads
$$ \frac{1-b_3}{b_3^2} < \frac35.$$
Regarding the value of $b_3$, we recall that \eqref{eqn:lowbdPEu} shows $\| P E_u \|^2 > \frac8{3\pi}\| E_u \|^2$, so that we one can take $b_3=\frac8{3\pi}$ and it is straightforward to check that condition \eqref{bd:constats} is satisfied.
\end{remark}

\section{Proof of instability}

 In Section \ref{sec:stability} we established the stability of $\cL_E$ in all spaces of the form $(\mrA_x \mrB_y )^{[\mrC]}$ with $\mrA, \mrB, \mrC \in \{ \mathrm{Ev},\mathrm{Odd} \}$, except for the case $(\mathrm{Ev}_x \mathrm{Ev}_y)^{[\mathrm{odd}]}$. Now, we turn our attention to this case, where we show the existence of a conjugate pair of unstable eigenfunctions of $\cL_E$. The main result of this section is the following.
\begin{proposition} \label{prop:EvEvstability} $\cL_E : (\mathrm{Ev}_x \mathrm{Ev}_y)^{[\mathrm{odd}]} \to (\mathrm{Ev}_x \mathrm{Ev}_y)^{[\mathrm{odd}]}$ has exactly one eigenvalue $\lambda_\star$ in the region $$ \lambda_\star \in [0.10, 0.17] - \im [0.57, 0.63]$$ Moreover, its
spectrum outside the imaginary axis is formed by the simple
eigenvalues
$\{ \lambda_\star, \bar \lambda_\star, -\lambda_\star, -\bar\lambda_\star \}$.
\end{proposition}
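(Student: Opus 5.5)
The plan is to reduce the statement to locating a single zero of a scalar stability function $\Phi$ from Theorem~\ref{thm:instability_criterion}, inside an $\cL_E$-invariant \emph{complex} subspace that carries exactly one negative direction of $\cH$.

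\textbf{Step 1: complex invariant splitting.} On $(\mathrm{Ev}_x\mathrm{Ev}_y)^{[\mathrm{odd}]}$, $\cH$ has exactly two negative directions, $\cos(x)$ and $\cos(y)$. I introduce the map
\[
S f(x,y):=f(y+\pi,x).
\]
The change of variables $(x,y)\mapsto(y+\pi,x)$ reverses orientation, so it flips the sign of the Poisson bracket. It also sends $\psi_E\mapsto-\psi_E$. The two sign changes cancel, so $S$ commutes with $\cJ=-\{\psi_E,\cdot\}$. Since $S$ is an isometry of $\TT^2$, it also commutes with $\Delta^{-1}$ and hence with $\cH$. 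Moreover $S^2 f(x,y)=f(x+\pi,y+\pi)$, and on the harmonics $\cos(jx)\cos(ky)$ with $j+k$ odd this equals $-f$. Thus $S^2=-\uno$, and the space splits into the $\pm\im$ eigenspaces $(\mathrm{Ev}_x\mathrm{Ev}_y)^{[\mathrm{odd}]}_\pm$. Both are invariant under $\cJ$ and $\cH$, and they are exchanged by complex conjugation. Each contains exactly one negative direction, namely (up to a choice of sign) $E_u=\cos(x)\pm\im\cos(y)$. By Lemma~\ref{lemma:uniqueness_algebra}, applied in each subspace (its proof works verbatim over $\CC$), there is at most one eigenvalue with positive real part in each subspace. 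Conjugation maps the one in $+$ to the one in $-$, giving the pair $\lambda_\star,\bar\lambda_\star$. The Hamiltonian symmetry $\lambda\mapsto-\bar\lambda$ (from \eqref{eq:H_orthogonal}, or by time reversal) then supplies $-\lambda_\star$ and $-\bar\lambda_\star$.

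\textbf{Step 2: simplicity.} Geometric simplicity follows from Theorem~\ref{thm:instability_criterion}, since $\Ker(\cL-\lambda)\cong\Ker(\cM_\lambda)$ and $\cM_\lambda$ acts on a one-dimensional space. For algebraic simplicity I extend the argument of Lemma~\ref{lemma:uniqueness_algebra} to generalized eigenvectors. If $\cL g=\lambda g+f$, the same computation as in \eqref{eq:H_orthogonal} shows that $\langle\cH\cdot,\cdot\rangle$ still vanishes on $\mathrm{span}\{f,g\}$. The $\Pi_-$-independence argument then forces the dimension of this isotropic space to be at most $1$, which rules out a Jordan chain.

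\textbf{Step 3: locating the zero.} In the $+$ subspace I apply Theorem~\ref{thm:instability_criterion} with $n=1$, taking $\cH_u=-h_u\,E_u\otimes E_u/\|E_u\|^2$, $s_u=\tfrac35$ and $h_u=\tfrac85$ as in Proposition~\ref{prop:OddEvstability}. This makes $\Phi$ explicit through \eqref{eq:Phi_def}.
\begin{itemize}
\item First I use a priori bounds to confine a possible zero. I write $\langle(\cT-\lambda)^{-1}\cT E_u,E_u\rangle$ as a spectral integral against $\mu_{E_u}$ and use $\Phi(\lambda)=0$. Taking real and imaginary parts should confine $\lambda$ to the semicircle $|z+\tfrac{\im}{2}|\le 0.27$; I expect the center $-\tfrac{\im}{2}$ to come from the Fourier support of $\cT E_u$.
\item Then I prove existence in the box $[0.10,0.17]-\im[0.57,0.63]$ by the argument principle. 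I show that $\Phi$ has winding number one along the boundary of the box, or equivalently verify that $\Phi$ is close to a computable approximant $\Phi_N$ with a single zero there and apply Rouché.
\end{itemize}
The approximant $\Phi_N$ uses a Fourier–Galerkin truncation of $\cT$, which is banded because $\cJ$ couples only neighbouring harmonics. The truncation error is controlled by the resolvent identity together with the bound $\|(\cT-\lambda)^{-1}\|\le 1/\Re\lambda\le 10$, which holds uniformly on the box.

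\textbf{Main obstacle.} The hardest step is this rigorous enclosure of $\Phi$ on the boundary of the box. The eigenfunction is non-smooth, so Galerkin convergence is slow. The plan is to avoid approximating the eigenfunction itself and instead bound only the scalar quantity $\langle(\cT-\lambda)^{-1}\cT E_u,E_u\rangle$. I would solve $(\cT_N-\lambda)u_N=\cT E_u$ in interval arithmetic and bound the residual $(\cT-\lambda)u_N-\cT E_u$, whose support is confined to a boundary shell of Fourier modes. This residual bound, multiplied by $1/\Re\lambda$, controls the error in $\Phi$. The remaining margin check, that $|\Phi-\Phi_N|<|\Phi_N|$ on the boundary, is then carried out in the Sage code.
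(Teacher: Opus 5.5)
Most of your plan coincides with the paper's proof. The $\pm\im$ eigenspaces of your $S$ are exactly the spaces $(\mathrm{Ev}_x\mathrm{Ev}_y)^{[\mathrm{odd}]}_\pm$ of \eqref{EvEVpm}: on ${\rm X}$ one has $S=-\varrho$, and on ${\rm X}^\perp$ one has $S=\varrho$. Each of these spaces contains one negative direction of $\cH$. The paper then uses Lemma~\ref{lemma:uniqueness_algebra}, conjugation, Theorem~\ref{thm:instability_criterion} with $s_u=\frac35$, $h_u=\frac85$, and a winding argument on the box, as you do. Your Jordan-chain extension of \eqref{eq:H_orthogonal} is correct, and it makes algebraic simplicity explicit where the paper leaves it implicit. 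The semicircle bullet is not needed for the statement. One correction: \eqref{eq:H_orthogonal} does not give the symmetry $\lambda\mapsto-\bar\lambda$. Instead, use that $\varrho$ preserves $(\mathrm{Ev}_x\mathrm{Ev}_y)^{[\mathrm{odd}]}$ and $\varrho\cL_E\varrho=-\cL_E$ by \eqref{signswitch}. Alternatively, as in the paper, use $\cL^*=-\cH\cL\cH^{-1}$ with $\cH$ invertible on this space.

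The genuine gap is the rigorous enclosure of $\Phi$ on the contour. Your error control, $|\langle(\cT-\lambda)^{-1}\cT E_u,E_u\rangle-\langle u_N,E_u\rangle|\le\|r\|\,\|E_u\|/\Re\lambda$, is first order in the residual. It factors through the $L^2$ bound $\|u_N-(\cT-\lambda)^{-1}\cT E_u\|\le\|r\|/\Re\lambda$. So, contrary to your stated intention, it does require approximating the non-smooth function $(\cT-\lambda)^{-1}\cT E_u$ accurately.

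At $\lambda=\lambda_\star$ this function is, up to $\cH_s^{-1/2}$, the unstable eigenfunction itself (see \eqref{eq:isomorphism}). That eigenfunction is expected to be non-smooth at the hyperbolic points (Remark~\ref{rmk:nonsmooth}). For $\Re\lambda\in[0.10,0.17]$ its Fourier tail, and hence any residual, can only be expected to decay like a small power of the truncation size. Worse, your Galerkin residual $(\uno-P_N)\cT u_N$ is the flux through the top shell, with coefficients of size up to $N/4$ by Lemma~\ref{lemma:J_computation}. The Galerkin solve does nothing to make it small. This residual is multiplied by $\frac{8}{3\|E_u\|\Re\lambda}$, which is up to about $4$ on the box. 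At realistic truncations the resulting bound is unlikely to fall below the margin $|\Phi|$ available on a contour of side about $0.06$.

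The missing idea is the primal--dual estimate of Lemma~\ref{lem:primal_dual}. You also compute an approximate adjoint solution $u_0^*\approx((\cT-\lambda)^*)^{-1}E_u$ and use the corrected value $\langle u_0,E_u\rangle+\langle r,u_0^*\rangle$. Its error, $\|r\|\,\|r^*\|/\Re\lambda$, is quadratic in the residuals. In the paper, $u_0$ and $u_0^*$ are non-rigorous least-squares approximations on trusted modes, computed at the center of each complex interval $\lambda^I$. Only the residuals and inner products are enclosed in ball arithmetic, so no interval linear solve is needed.
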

\subsection{
Complex invariant subspaces
}

 Let us introduce the auxiliary subspace $\rm{X}\subset (\mathrm{Ev}_x \mathrm{Ev}_y)^{[\mathrm{odd}]}  $ of functions $g$
admitting a Fourier series 
$
g(x,y) = \sum g_{j,k} \cos(j x) \cos(k y) 
$, with $g_{j,k}=0$ if $j$ is even and $k$ is odd. Notice that the orthogonal of $\rm{X}$ in $(\mathrm{Ev}_x \mathrm{Ev}_y)^{[\mathrm{odd}]}  $  is given by Fourier series with $g_{j,k}=0$ if $j$ is odd and $k$ is even.
In view of \eqref{cJcH} and
\eqref{actionJonharmonics}, one has $$ \cH\big(\rm{X}\big) = \rm{X}\,,\quad\cH\big(\rm{X}^\perp\big) = \rm{X}^\perp\,,\quad \cJ(\rm{X}),\;\cL_E(\rm{X} ) \subseteq \rm{X}^\perp\,, \quad \cJ(\rm{X}^\perp),\;\cL_E(\rm{X}^\perp) \subseteq \rm{X}\, .$$

The spaces $\rm{X}$ and $\rm{X}^\perp$ are alternating also for the involution
$\varrho$ in \eqref{sigmaoperator}. By \eqref{signswitch},
the spaces
\begin{equation}
    \label{EvEVpm}
   (\mathrm{Ev}_x \mathrm{Ev}_y)^{[\mathrm{odd}]}_+ := \big\{   g + \im \varrho g : g \in \rm{X}   \big\}\,,\quad (\mathrm{Ev}_x \mathrm{Ev}_y)^{[\mathrm{odd}]}_- := \big\{  g - \im \varrho g : g \in \rm{X}   \big\}
 \end{equation}
 are invariant for $\cH$, $\cJ$ and $\cL_E$, since, for instance, $\cL_E(g+\im \varrho g)= \cL_Eg- \im \varrho\cL_Eg = h + \im \varrho h $, with $h:=- \im \varrho\cL_Eg  $, and form an $L^2$-orthogonal splitting of $ (\mathrm{Ev}_x \mathrm{Ev}_y)^{[\mathrm{odd}]} $. 

\begin{remark}
The invariant spaces in \eqref{EvEVpm}
are not closed under complex conjugation
and hence the spectrum of the operator $\cL_E$  restricted to $(\mathrm{Ev}_x \mathrm{Ev}_y)^{[\mathrm{odd}]}_\pm$ is not symmetric with respect to the real axis. On the other hand, $f$ is an eigenfunction of $\cL_E$ in $(\mathrm{Ev}_x \mathrm{Ev}_y)^{[\mathrm{odd}]}_+$ if and only if $\overline{f}$ is an eigenfunction in $ (\mathrm{Ev}_x \mathrm{Ev}_y)^{[\mathrm{odd}]}_- $. We then restrict our study  to $\cL_E$ on $(\mathrm{Ev}_x \mathrm{Ev}_y)^{[\mathrm{odd}]}_+$ and find the pair of eigenvalues $\{\lambda_\star,-\overline{\lambda}_\star\} $ in Proposition \ref{prop:EvEvstability}. 
\end{remark}
Let us equip the space  $(\mathrm{Ev}_x \mathrm{Ev}_y)^{[\mathrm{odd}]}_+$ with a Fourier basis.
Let 
$j,k \in \ZZ$, and define 
\begin{align}\label{def:Ejk}
E_{j, k} &:= \cos(jx)\cos(ky) + \im (-1)^{j} \cos(kx) \cos(jy) \,. 
\end{align}
 One readily observes that
$
E_{j,-k} = E_{j,k}\,,\; E_{-j,k}=\overline{E}_{j,k}$ and, for $j+k$ odd, $ E_{k,j} = (-1)^k \im E_{j,k}
$.
The set $\{ E_{j,k}: (j,k) \in \cI\} $, where
\begin{equation}
    \label{specialFourierbasis}
\cI := \{(j,k) \in \ZZ^2: j,k\geq0\,,\; j+k\text{ odd}\,,\; k\leq j\}\, ,
\end{equation}
is an orthogonal basis for the space  $(\mathrm{Ev}_x \mathrm{Ev}_y)^{[\mathrm{odd}]}_+$.
We compute the action of $\cJ$ on such basis.
\begin{lemma} \label{lemma:J_computation} 
The operator $\cJ$ in \eqref{cJcH} acts on the functions $E_{j,k}$ in \eqref{def:Ejk} as follows:
\begin{enumerate}[label=\arabic*)]
\item Case $(j, k) = (1, 0)$:
\begin{equation}\label{actionE10}
     \cJ E_{1, 0} = \frac{\im}{2} E_{1, 0} - \frac12 E_{2, 1}.
\end{equation}
\item Case $(j, k) = (j, 0)$ for $j\geq 3$:
$$ \cJ E_{j, 0} = \frac{j}{2} E_{j-1, 1} - \frac{j}{2} E_{j+1, 1}.$$
\item Case $(j, k) = (k+1, k)$ for $k \geq 1$:
$$ \cJ E_{j, k} =  \frac14( - E_{j+1, k+1} + E_{j-1, k-1}) - \frac{2k+1}{4} E_{j+1, k-1} + \frac{(2k+1)\im (-1)^{k}}{4} E_{j, k}.$$
\item In any other case:
$$\cJ E_{j, k} = \frac{j-k}{4}(- E_{j+1, k+1} + E_{j-1, k-1}) + \frac{j+k}{4} ( -E_{j+1, k-1} +  E_{j-1, k+1} ).$$
\end{enumerate}
\end{lemma}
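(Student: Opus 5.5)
The lemma is a direct computation: I would first derive a single formula for $\cJ$ acting on a product of cosines, then rewrite its output in the basis $\{E_{j,k}\}_{(j,k)\in\cI}$ using the symmetries of $E_{j,k}$ recorded above.

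\textbf{Step 1: a general formula.} Since $\cJ=-\{\psi_E,\cdot\}=\{\sin x\sin y,\cdot\}$, formula \eqref{actionJonharmonics} applied to $g=\cos(jx)\cos(ky)$ gives
\begin{equation*}
\cJ\big(\cos(jx)\cos(ky)\big)= j\,\sin x\sin(jx)\cos y\cos(ky)-k\,\cos x\cos(jx)\sin y\sin(ky).
\end{equation*}
Applying the product-to-sum formulas in each variable, this becomes
\begin{equation*}
\tfrac{j-k}{4}\big(C_{j-1,k-1}-C_{j+1,k+1}\big)+\tfrac{j+k}{4}\big(C_{j-1,k+1}-C_{j+1,k-1}\big),\qquad C_{a,b}:=\cos(ax)\cos(by).
\end{equation*}
This is valid for all $j,k\in\ZZ$.

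\textbf{Step 2: the swapped component.} I would apply the same formula to the second component $\im(-1)^j\cos(kx)\cos(jy)$ of $E_{j,k}$, with the roles of $j$ and $k$ exchanged. The coefficient $\frac{k-j}{4}$ then carries the opposite sign, which matches the relation $\varrho\cJ=-\cJ\varrho$ in \eqref{signswitch}. The neighbouring indices $j\pm1$ flip the parity factor $(-1)^j$, and this supplies exactly the extra sign needed for the four resulting terms to recombine into the four functions $E_{j\pm1,k\pm1}$. This yields case 4) as the generic identity, valid whenever all four output indices already lie in $\cI$.

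\textbf{Step 3: boundary cases.} The remaining cases arise when an output index leaves $\cI$, and I would fold it back using $E_{j,-k}=E_{j,k}$, $E_{-j,k}=\overline{E}_{j,k}$ and $E_{k,j}=(-1)^k\im E_{j,k}$ for $j+k$ odd.
\begin{itemize}
\item For $k=0$ (case 2), the terms $E_{j\pm1,-1}$ fold onto $E_{j\pm1,1}$, and the coefficients combine into $\frac{j}{2}$.
\item For $j=k+1$ (case 3), the term $E_{j-1,k+1}=E_{k,k+1}$ is a swap of $E_{k+1,k}$. It therefore produces the diagonal term $\frac{(2k+1)\im(-1)^k}{4}E_{j,k}$.
\item For $(1,0)$ (case 1), the folding of $E_{0,\pm1}$ back to $E_{1,0}$ combines with the $k=0$ doubling, and produces $\frac{\im}{2}E_{1,0}-\frac12E_{2,1}$.
\end{itemize}

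The main obstacle is bookkeeping rather than any conceptual difficulty: getting the signs $(-1)^j$ and the factors of $\im$ right in the swap relation, especially in cases 1) and 3), where a term folds back onto the input. I would double-check these cases by evaluating both sides at a few points or by expanding in $C_{a,b}$ directly.
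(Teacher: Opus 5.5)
Your proposal is correct and follows essentially the same route as the paper: the product-to-sum computation of $\cJ$ on $\cos(jx)\cos(ky)$, then recombination into the generic identity in the functions $E_{j\pm1,k\pm1}$, then folding back via $E_{j,-1}=E_{j,1}$ and $E_{k,k+1}=\im(-1)^kE_{k+1,k}$. Your use of $\varrho\cJ=-\cJ\varrho$ together with $(-1)^{j\pm1}=-(-1)^j$ is just a cleaner phrasing of the paper's ``by linearity'' step, and your boundary computations for cases 1)--3) check out.
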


\begin{figure}[ht]
    \centering
    \begin{tikzpicture}[scale=0.75, >=stealth]
    \tikzset{
        mode/.style={circle, draw=black, fill=white, inner sep=0pt, minimum size=5pt},
        source/.style={circle, draw=red, thick, fill=red!20, inner sep=0pt, minimum size=7pt},
        link/.style={<->, thick, gray!70},
        linklgd/.style={->, thick, gray!70}, 
        evolvelgd/.style={->, dashed, thick, gray!70},
        evolve/.style={<->, dashed, thick, gray!70},
        selfloop/.style={->, thick, blue, looseness=10, min distance=8mm, out=150, in=75},
        boundary/.style={->, thick, red, bend right=20},
        rootloop/.style={->, thick, purple, looseness=10, min distance=8mm, out=-135, in=-45}
    }

    \draw[->] (0,0) -- (7.5,0) node[right] {$j$};
    \draw[->] (0,0) -- (0,5.5) node[above] {$k$};

    \node[source] (N-1-0) at (1,0) {};
    \node[mode]   (N-3-0) at (3,0) {};
    \node[mode]   (N-5-0) at (5,0) {};
    \node[mode]   (N-2-1) at (2,1) {};
    \node[mode]   (N-4-1) at (4,1) {};
    \node[mode]   (N-6-1) at (6,1) {};
    \node[mode]   (N-3-2) at (3,2) {};
    \node[mode]   (N-5-2) at (5,2) {};
    \node[mode]   (N-4-3) at (4,3) {};
    \node[mode]   (N-6-3) at (6,3) {};
    \node[mode]   (N-5-4) at (5,4) {};

    \draw[selfloop] (N-1-0) to (N-1-0);
    \draw[selfloop] (N-2-1) to (N-2-1);
    \draw[selfloop] (N-3-2) to (N-3-2);
    \draw[selfloop] (N-4-3) to (N-4-3);
    \draw[selfloop] (N-5-4) to (N-5-4);

    \draw[rootloop] (N-1-0) to (N-1-0);

    \draw[link] (N-1-0) -- (N-2-1);
    \draw[link] (N-3-0) -- (N-2-1); \draw[link] (N-3-0) -- (N-4-1);
    \draw[link] (N-5-0) -- (N-4-1); \draw[link] (N-5-0) -- (N-6-1);
    \draw[link] (N-2-1) -- (N-3-2);
    \draw[link] (N-4-1) -- (N-3-2); \draw[link] (N-4-1) -- (N-5-2);
    \draw[link] (N-6-1) -- (N-5-2);
    \draw[link] (N-3-2) -- (N-4-3);
    \draw[link] (N-5-2) -- (N-4-3); \draw[link] (N-5-2) -- (N-6-3);
    \draw[link] (N-4-3) -- (N-5-4);
    \draw[link] (N-6-3) -- (N-5-4);

    \draw[boundary] (N-1-0) to (N-2-1);
    \draw[boundary] (N-3-0) to (N-2-1); \draw[boundary] (N-3-0) to (N-4-1);
    \draw[boundary] (N-5-0) to (N-4-1); \draw[boundary] (N-5-0) to (N-6-1);

    \draw[evolve] (N-5-4) -- ++(0.7, 0.7);
    \draw[evolve] (N-6-3) -- ++(0.7, 0.7);
    \draw[evolve] (N-6-3) -- ++(0.7, -0.7);
    \draw[evolve] (N-6-1) -- ++(0.7, 0.7);
    \draw[evolve] (N-6-1) -- ++(0.7, -0.7);

    \node[anchor=north west, draw=gray!20, fill=white, rounded corners, inner sep=4pt] at (9.0, 5.5) {
        \scriptsize
        \begin{tabular}{@{} c l @{}}
            \multicolumn{2}{l}{\footnotesize \textbf{Spectral Coupling}} \\ [4pt]
            \textcolor{red}{$\bullet$} & Unstable Mode $(1,0)$ \\ [2pt]
            \tikz[baseline=-0.6ex]{\draw[linklgd] (0,0) -- (0.35,0);} & Standard Interaction \\ [2pt]
            \textcolor{blue}{$\circlearrowleft$} & Reflection across $k = j$ \\ [2pt]
            \textcolor{red}{$\curvearrowright$} & Reflection across $k = 0$ \\ [2pt]
            \textcolor{purple}{$\circlearrowright$} & Double reflection \\ [2pt]
            \tikz[baseline=-0.6ex]{\draw[evolvelgd] (0,0) -- (0.35,0);} & Pattern Evolution
        \end{tabular}
    };

\end{tikzpicture}
    \caption{Visualization of the $\cJ$ acting on the basis $\{ E_{j, k} \}_{(j, k) \in \cI}$ of our invariant subspace $(\mathrm{Ev}_x \mathrm{Ev}_y)^{[\mathrm{odd}]}_+$. The gridpoint $(j, k)$ represents $E_{j, k}$ and it has arrows to each component of $\cJ E_{j, k}$. Gray arrows correspond to the general case, while coloured arrows correspond to the special cases arising from reflections.}
    \label{fig:spectral_lattice}
\end{figure}
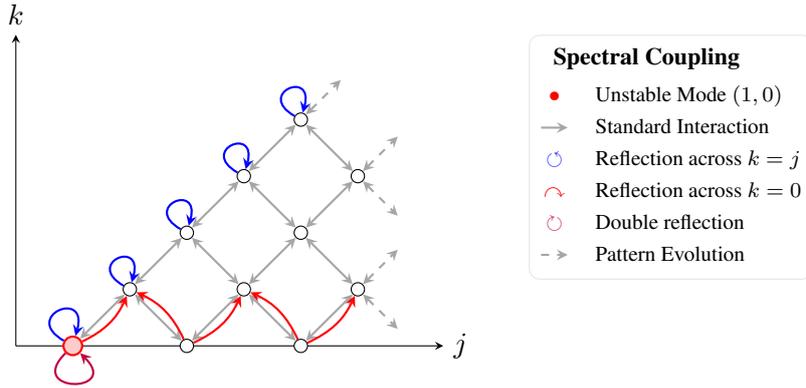

\begin{proof}

First recall that
 $   \cJ f =  \cos(x)\sin(y) \partial_y f - \sin(x)\cos(y) \partial_x f$
by \eqref{cJcH} and \eqref{PoissonBracket}.
We define
\begin{equation}
    \mathfrak{C}_{j,k} := \cos(jx)\cos(ky), \qquad \text{ so that } \qquad E_{j,k} = \mathfrak{C}_{j, k} + \im (-1)^j \mathfrak{C}_{k, j}.
\end{equation}
We start by computing the action of $\cJ$ on $\mathfrak{C}_{j, k}$
\begin{align}
    \cJ \mathfrak{C}_{j, k} &= \cos(x)\sin(y) \big[ -k \cos(jx)\sin(ky) \big] - \sin(x)\cos(y) \big[ -j \sin(jx)\cos(ky) \big] \nonumber \\
    &= -k [\cos(x)\cos(jx)] [\sin(y)\sin(ky)] \nonumber \\
    &\quad -\frac{k}{4} \big( \cos((j-1)x) + \cos((j+1)x) \big) \big( \cos((k-1)y) - \cos((k+1)y) \big) \nonumber \\
    &\quad  +j [\sin(x)\sin(jx)] [\cos(y)\cos(ky)] \nonumber \\
    &\quad +\frac{j}{4} \big( \cos((j-1)x) - \cos((j+1)x) \big) \big( \cos((k-1)y) + \cos((k+1)y) \big) \nonumber \\
    &=
    \frac{1}{4} \Big[ (j-k) \mathfrak{C}_{j-1, k-1} - (j-k) \mathfrak{C}_{j+1, k+1} + (j+k) \mathfrak{C}_{j-1, k+1} - (j+k) \mathfrak{C}_{j+1, k-1} \Big] \label{J_cosinus}
\end{align}
Now, we compute $\cJ E_{j, k}$ observing that $E_{j, k} = \mathfrak{C}_{j, k} + i(-1)^j \mathfrak{C}_{k, j}$, by  linearity:
\begin{equation} \label{eq:JE_1}
\cJ E_{j, k} = \frac{1}{4} \Big[ (j-k) E_{j-1, k-1} - (j-k) E_{j+1, k+1} + (j+k) E_{j-1, k+1} - (j+k)E_{j+1, k-1} \Big].
\end{equation}
However, the formula above exits our basis $\{ E_{j, k} \}_{(j, k) \in \mathcal I}$ whenever $k = 0$ or $j = k+1$, since it will propagate to functions of the type $E_{j, k}$ with $(j, k)\notin \mathcal I$. Therefore, we notice that
\begin{align} \begin{split} \label{eq:E_reflections}
E_{j, -1} &= \mathfrak{C}_{j, -1} + \im (-1)^j \mathfrak{C}_{-1, j} = \mathfrak{C}_{j, 1} + \im (-1)^j \mathfrak{C}_{1, j} = E_{j, 1}, \\
E_{k, k+1} &= \mathfrak{C}_{k, k+1} + \im(-1)^k \mathfrak{C}_{k+1, k} = \im(-1)^k \big[ \mathfrak{C}_{k+1, k} + \im(-1)^{k+1} \mathfrak{C}_{k, k+1} \big] = \im(-1)^k E_{k+1, k}.
\end{split} \end{align}
Combining \eqref{eq:JE_1} and \eqref{eq:E_reflections}, we get that for any $j \geq 3:$
\begin{align}\label{actionofJonE} \begin{split}
  \cJ E_{1,0} &= \frac{\im}{2}  E_{1,0} - \frac{1}{2} E_{2,1}, \\
     \cJ E_{j,0} &= \frac{j}{2} E_{j-1, 1} - \frac{j}{2} E_{j+1, 1}, \\
    \cJ E_{j, j-1} &= \frac{1}{4} E_{j-1, j-2} - \frac{1}{4} E_{j+1, j} - \frac{2j-1}{4} E_{j+1, j-2} + \im (-1)^k \frac{2j-1}{4} E_{j, j-1}.
\end{split} \end{align}
These formulas, together with \eqref{eq:JE_1} in the cases $j-1 > k > 0$, prove the Lemma.
\end{proof}

On the invariant subspace $(\mathrm{Ev}_x \mathrm{Ev}_y)^{[\mathrm{odd}]}_+$  the operator $\cH$ has only one negative direction, spanned by $E_{1,0}$. Consequently, by Lemma \ref{lemma:uniqueness_algebra}, the operator $\cL_E$ restricted to $(\mathrm{Ev}_x \mathrm{Ev}_y)^{[\mathrm{odd}]}_+$ has at most one simple unstable eigenvalue. We can localize the latter by means of the following lemma, which can be thought as an analogue of the classical Howard's semicircle theorem for shear flows \cite{drazin2004hydrodynamic}.
\begin{lemma} If $\cL_E|_{(\mathrm{Ev}_x \mathrm{Ev}_y)^{[\mathrm{odd}]}_+} $ has an unstable eigenvalue $\lambda$ then this lies in the semicircle 
\begin{equation}\label{semicircle}
    \Big\{ z \in \CC\;:\; \Re (z) >0 \,,\ \big|z + \tfrac\im2 \big| \leq \frac3{10}\sqrt{\frac56} \Big\}\, .
\end{equation}
\end{lemma}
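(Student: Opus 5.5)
The plan is to combine the isotropy identity $\langle \cH f,f\rangle=0$ from Lemma~\ref{lemma:uniqueness_algebra} with the projection of the eigenvalue equation onto the single negative direction $E_{1,0}$ of $\cH$, using the explicit formula \eqref{actionE10} for $\cJ E_{1,0}$. Let $f\in(\mathrm{Ev}_x\mathrm{Ev}_y)^{[\mathrm{odd}]}_+$ satisfy $\cJ\cH f=\lambda f$ with $\Re\lambda>0$. Write $E:=E_{1,0}$ and decompose $f=aE+g$ with $g\perp E$. In the basis \eqref{specialFourierbasis}, $\cH$ acts diagonally with multiplier $1-2/(j^2+k^2)$. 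Hence $\cH E=-E$, and $\cH$ preserves $\{E\}^\perp$ with $\langle \cH g,g\rangle\ge \tfrac35\|g\|^2$ there, since every other index in $\cI$ has $j^2+k^2\ge5$. Also $\cH E_{2,1}=\tfrac35 E_{2,1}$.

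\textbf{Step 1 (mass comparison).} By Lemma~\ref{lemma:uniqueness_algebra}, $0=\langle\cH f,f\rangle=-|a|^2\|E\|^2+\langle \cH g,g\rangle$. This gives
\[
\tfrac35\|g\|^2\le |a|^2\|E\|^2 .
\]
In particular $a\neq0$, for otherwise $g=0$ and $f=0$.

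\textbf{Step 2 (projection onto $E$).} Pair the eigenvalue equation $\lambda f=\cJ(-aE+\cH g)$ with $E$. Using \eqref{actionE10} we get $\langle \cJ E,E\rangle=\tfrac{\im}{2}\|E\|^2$, so the $a$-term contributes $-\tfrac{\im}{2}a\|E\|^2$. For the $g$-term, skew-adjointness of $\cJ$ gives
\[
\langle \cJ\cH g,E\rangle=-\langle \cH g,\cJ E\rangle .
\]
Since $\cH g\perp E$, only the $E_{2,1}$ component of $\cJ E$ survives, and this equals $\tfrac12\langle \cH g,E_{2,1}\rangle=\tfrac12\langle g,\cH E_{2,1}\rangle=\tfrac3{10}\langle g,E_{2,1}\rangle$. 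Altogether
\[
\big(\lambda+\tfrac{\im}{2}\big)\,a\,\|E\|^2=\tfrac{3}{10}\langle g,E_{2,1}\rangle .
\]
I will double-check the sign conventions of $\langle\cdot,\cdot\rangle$ (linear in the first slot) at this step. Only the modulus matters in the end, so a sign slip would not affect the conclusion.

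\textbf{Step 3 (conclusion).} From \eqref{def:Ejk}, $\|E_{1,0}\|^2=\|\cos x\|^2+\|\cos y\|^2=4\pi^2$ and $\|E_{2,1}\|^2=2\pi^2$, so $\|E_{2,1}\|=\|E\|/\sqrt2$. Apply Cauchy--Schwarz to the identity of Step 2, and then Step 1 in the form $\|g\|\le\sqrt{5/3}\,|a|\,\|E\|$:
\[
\big|\lambda+\tfrac{\im}{2}\big|\,|a|\,\|E\|^2\le \tfrac3{10}\,\|g\|\,\frac{\|E\|}{\sqrt2}\le \tfrac{3}{10}\sqrt{\tfrac53}\,\frac{1}{\sqrt2}\,|a|\,\|E\|^2 .
\]
Dividing by $|a|\,\|E\|^2\neq0$ yields $|\lambda+\tfrac{\im}{2}|\le\tfrac3{10}\sqrt{5/6}$, which is \eqref{semicircle}.

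The only delicate point is Step 2: one must correctly identify the adjoint action of $\cJ$ on $E$. The key observation is that $\cH g$ has no $E$-component, so the unknown $g$ enters only through the single coefficient $\langle g,E_{2,1}\rangle$. This is what makes the bound finite-dimensional and sharp enough.
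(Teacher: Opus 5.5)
Your proof is correct and follows the paper's own route: isotropy $\langle \cH f,f\rangle=0$ bounds the positive part of $f$ by its $E_{1,0}$-component, and pairing the eigenvalue equation with $E_{1,0}$ (using the formula for $\cJ E_{1,0}$, $\cH E_{2,1}=\tfrac35 E_{2,1}$, and Cauchy--Schwarz) gives the bound on $|\lambda+\tfrac{\im}{2}|$. The only difference is cosmetic: you keep a general coefficient $a$ and check that $a\neq 0$, which justifies the normalization $f=E_{1,0}+f_+$ that the paper uses without comment.
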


\begin{proof} By \eqref{lemma:uniqueness_algebra}, we have $\langle \cH f, f\rangle = 0$ for every eigenfunction $f$ associated with $\lambda$. In particular, we can choose $f$ of the form
 $f = E_{1,0} + f_+$, with $f_+ \in E_{1,0}^\perp$. Then, observe that $\langle \cH E_{1,0}, E_{1,0} \rangle = - \| E_{1,0}\|_{L^2}^2 = -4\pi^2$  and $\langle \cH f_+, f_+ \rangle \geq \frac35 \|f_+\|_{L^2}^2$, by \eqref{coercivewhole}. Hence
\begin{equation}\label{Kreinestimate}
     0 = \langle \cH f, f\rangle =  \langle \cH E_{1,0}, E_{1,0} \rangle + \langle \cH f_+, f_+ \rangle \geq -4\pi^2+ \frac 3 5  \|f_+\|_{L^2}^2\, .
 \end{equation}
 In addition, the eigenvalue $\lambda$ is given by the inner product
\begin{equation}\label{auxinnerproduct}
    \langle \cL_E f , E_{1,0} \rangle = \lambda \langle  f , E_{1,0} \rangle  = \lambda \| E_{1,0}\|_{L^2}^2 = 4\pi^2\lambda \, .
\end{equation}
The left-hand side further develops into
\begin{align} \notag
    \langle \cL_E f , E_{1,0} \rangle &=  - \langle f, \cH \cJ E_{1,0} \rangle \stackrel{\eqref{actionE10}}{=}- \langle E_{1,0}+ f_+,  \frac{\im}{2} \cH E_{1,0} - \frac{1}{2} \cH E_{2,1} \rangle \\ \notag
    &=  \langle E_{1,0}+ f_+, \frac{\im}{2}  E_{1,0} + \frac{3}{10}  E_{2,1} \rangle = -\frac{\im}{2} (4\pi^2) + \frac{3}{10} \langle f_+ , E_{2,1} \rangle\, ,
\end{align}
which, combined with \eqref{auxinnerproduct} and since $\| E_{2,1} \|_{L^2} = \sqrt{2}\pi$, gives
\begin{equation}
    (4\pi)^2\big|\lambda + \tfrac{\im}{2} \big| \leq \frac{3}{10} \| f_+ \|_{L^2} \|E_{2,1} \|_{L^2}\stackrel{\eqref{Kreinestimate}}{\leq } (4\pi)^2\frac3{10}\sqrt{\frac56} \,.{\textcolor{white}{\qedhere}} \tag*{\qed}
\end{equation}
\end{proof}
\begin{remark} By relying on the Lin and Zeng  \cite{LinZengMemoirs} version of the Pontryagin invariant subspace theorem, one can ensure that the disk $|z+\frac\im2 | \leq \frac3{10} \sqrt{\frac56} $ contains two eigenvalues of $\cL$ either on the imaginary axis or forming a stable/unstable pair $\{ \lambda_*,-\bar\lambda_*\}$, with $\lambda_*$ lying in the semicircle \eqref{semicircle}. 
\end{remark}

\begin{figure}[h!!] \centering
\begin{tikzpicture}[>=stealth,scale=4]


  \draw[->] (-0.9,0) -- (0.9,0) node[right] {$\Im z$};
  \draw[->] (0,-0.2) -- (0,0.4) node[above] {$\Re z$};

  \coordinate (C) at (-0.5,0);

  \def\R{0.27} 

  \begin{scope}
    \clip (-1,0) rectangle (1,0.9); 
    \fill[blue!10] (C) circle (\R); 
    \draw[blue,thick] (C) circle (\R);

    \draw[blue,dashed] (C) -- ++(0,\R)
      node[midway,right] {$r$};
  \end{scope}

  \fill (C) circle (0.02);
  \node[below] at (C) {$-\tfrac{i}{2}$};

  \node[blue] at (-0.70,0.45) {$r \approx 0.27$};

  \coordinate (lam) at (-0.58,0.14);
  \fill[red] (lam) circle (0.02);
  \node[left] at (lam) {$\lambda_\star$};

  \coordinate (lam_sym) at (-0.58,-0.14);
  \fill[red] (lam_sym) circle (0.02);
  \node[left] at (lam_sym) {$-\overline{\lambda}_\star$};


\end{tikzpicture}
\begin{caption}{
Visual representation of the instability semicircle on the rotated $\lambda$-plane. The chosen value of $\lambda_\star$ is compatible with Proposition \ref{prop:EvEvstability}.}
\end{caption}
\end{figure}

\subsection{Application of Theorem \ref{thm:instability_criterion}}
We are now in a position to apply our stability criterion Theorem \ref{thm:instability_criterion} to $ \cL_E$ restricted to its invariant subspace $(\mathrm{Ev}_x \mathrm{Ev}_y)^{[\mathrm{odd}]}_+ \subset (\mathrm{Ev}_x \mathrm{Ev}_y)^{[\mathrm{odd}]}$. Let us define
\begin{equation} \label{eq:lesdiablerets}
E_u = E_{1, 0}, \qquad \cH_u(f) = -\frac85 E_u \langle E_u, f \rangle, \qquad \cH_s = \cH - \cH_u.
\end{equation}
 We point out that our definition of $E_u$ changes regarding the one used in Section \ref{subsec:difficult_stability}, since we are treating different subspaces of $L^2(\mathbb T^2)$. For any other mode $E_{j, k}$ with $(j, k)\in \mathcal I \setminus \{ (1, 0)\}$ we have 
 \begin{equation} \label{eq:formula_H}
 \mathcal H E_{j, k} = \mathcal H_s E_{j, k} = \left( 1 - \frac{2}{j^2+k^2} \right) E_{j, k}
 \end{equation}

We point out that our decomposition $\cH = \cH_s + \cH_u$ fulfills the hypothesis of Theorem \ref{thm:instability_criterion}, with $n=1$ and $s_u = \frac35$, $h_u = \frac85$. Hence, eigenvalues of $\cL$ are given by zeros of the holomorphic function
\begin{equation} \label{eq:new_Phi}
\Phi (\lambda) = 1 - \frac8{3\|E_u\|_{L^2}^2} \langle (\cT - \lambda)^{-1} \cT E_u, E_u \rangle\,.
\end{equation}

We will study $\Phi(\lambda)$ over a rectangular contour with corners given by:
\begin{equation} \label{eq:corners}
z_1 = \frac{10}{100} - \frac{57}{100} \im, \quad z_2 = \frac{17}{100} - \frac{57}{100} \im, \quad z_3 = \frac{17}{100} - \frac{63}{100} \im, \quad z_4 = \frac{10}{100} - \frac{63}{100} \im.
\end{equation}
We let $\Gamma$ to be the rectangular contour in the order: $z_1, z_2, z_3, z_4, z_1 $. Then, we have the following Lemma, which is proved by computer assistance by combining a rigorous winding argument and a rigorous primal-dual bound.

\begin{lemma} \label{lemma:holo_root} The holomorphic function $\Phi(\lambda)$ has exactly one simple root in the interior of $\Gamma$.
\end{lemma}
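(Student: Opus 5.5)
We will locate the zero of $\Phi$ with the argument principle: since $\Phi$ is holomorphic on $\{\Re\lambda>0\}$ and $\Gamma$ lies in that half-plane, the number of zeros inside $\Gamma$ (with multiplicity) equals the winding number of $\Phi(\Gamma)$ around $0$. The plan is to show that this winding number is exactly $1$. Then the root inside $\Gamma$ is automatically simple, because multiplicity $\geq 2$ would make the winding number at least $2$.

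The main difficulty is that $\Phi$ involves the resolvent of the infinite-dimensional operator $\cT=\cH_s^{1/2}\cJ\cH_s^{1/2}$. We plan to handle this as follows.

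\textbf{Step 1: truncated approximation.} Using Lemma~\ref{lemma:J_computation} and \eqref{eq:formula_H}, $\cT$ has an explicit sparse matrix in the orthonormalized basis $\{E_{j,k}\}_{(j,k)\in\cI}$. We truncate to the modes with $j\le N$. On a fine grid of points $\lambda_i\in\Gamma$, we solve the finite linear system $(\cT_N-\lambda_i)g_i\approx \cT_N E_u$ numerically. This gives a candidate value $\Phi_N(\lambda_i)$ for each grid point.

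\textbf{Step 2: rigorous a posteriori bound.} To make the evaluation rigorous we use a primal-dual argument. Write $\langle(\cT-\lambda)^{-1}\cT E_u,E_u\rangle = \langle u, E_u\rangle$, where $(\cT-\lambda)u=\cT E_u$. For any approximate solution $g$ (finitely supported, so $\cT g$ can be computed exactly) set the residual $r=(\cT-\lambda)g-\cT E_u$. Then
\[
\langle u,E_u\rangle=\langle g,E_u\rangle-\langle (\cT-\lambda)^{-1}r,E_u\rangle .
\]
The correction term can be handled in two ways. The crude option is the bound $\|(\cT-\lambda)^{-1}\|\le 1/\Re\lambda\le 10$, which gives an error of at most $10\|r\|\|E_u\|$. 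The sharper option is to write the correction as $\langle r,(\cT^*-\bar\lambda)^{-1}E_u\rangle$ and use a dual approximate solution $h$ of $(-\cT-\bar\lambda)h\approx E_u$. The error then becomes $\langle r,h\rangle$ plus a term quadratic in the two residuals. The residuals $r$ come only from the boundary modes at $j=N,N+1$, so they are exactly computable with interval arithmetic.

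\textbf{Step 3: controlling $\Phi$ between grid points.} We also need $\Phi$ on the continuous contour, not only at grid points. For this we use a Lipschitz bound on each edge:
\[
|\Phi'(\lambda)|\le \tfrac{8}{3}\,\|\cT E_u\|\,\|E_u\|^{-1}\,(\Re\lambda)^{-2},
\]
since $\|(\cT-\lambda)^{-2}\|\le(\Re\lambda)^{-2}$.

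\textbf{Step 4: winding count.} With the enclosures from Steps 2 and 3, we verify that on each small arc of $\Gamma$ the enclosure of $\Phi$ avoids a sector containing $0$. This allows us to count rigorously the net change of argument along $\Gamma$ and to obtain winding number $1$.

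\textbf{Main obstacle.} The hard part is the accuracy of Step 2 on the left edge $\Re\lambda=0.1$. There the resolvent bound is weakest, and because the eigenfunction is nonsmooth (Remark~\ref{rmk:nonsmooth}) the Fourier coefficients of $u$ decay slowly, so $r$ is not small for moderate $N$. We would first rely on the dual correction to gain the quadratic residual error. If that does not suffice, we would refine the grid and enlarge $N$ only near the left edge, where $|\Phi|$ stays away from zero and coarser enclosures are enough.
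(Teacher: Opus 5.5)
Your plan is correct and is essentially the paper's proof: a winding-number count via half-plane-oriented enclosures of $\Phi$ on a cover of $\Gamma$ (Lemma~\ref{lem:winding_number}), with $\langle(\cT-\lambda)^{-1}\cT E_u,E_u\rangle$ enclosed on a Fourier truncation in interval arithmetic by exactly the primal--dual bound of Lemma~\ref{lem:primal_dual}. The one notable difference is your Step 3. You use a derivative bound, which here is about $0.98\,(\Re\lambda)^{-2}\approx 100$ on the left edge and so forces a fine grid. The paper instead evaluates the primal--dual formula with $\lambda$ ranging over a complex box and $u_0,u_0^\ast$ computed at its center: the main term is affine in $\lambda$ and carries the first-order variation of $\Phi$ across the box, only $\|r\|\|r^\ast\|/\Re\lambda$ is paid as error, and this is why $26$ boxes suffice.
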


We defer the proof to the next subsection \ref{subsec:CAP} and in the remaining of this section, we proceed to prove Proposition \ref{prop:EvEvstability} assuming Lemma \ref{lemma:holo_root}.

\begin{proof}[Proof of Proposition \ref{prop:EvEvstability}]

The existence of $\lambda_\star$ being a simple eigenvalue of $\cL : (\mathrm{Ev}_x \mathrm{Ev}_y)^{[\mathrm{Odd}]}_+ \to (\mathrm{Ev}_x \mathrm{Ev}_y)^{[\mathrm{Odd}]}_+$ is directly guaranteed by the combination of Lemma \ref{lemma:holo_root} and Theorem \ref{thm:instability_criterion}. Moreover, by Lemma \ref{lemma:uniqueness_algebra} and $\mathrm{dim}({\rm V}_u) = 1$, we obtain that the simple eigenvalue $\lambda_\star$ is the only eigenvalue with positive real part in $(\mathrm{Ev}_x \mathrm{Ev}_y)^{[\mathrm{Odd}]}_+$. Then, we notice that conjugation sends $(\mathrm{Ev}_x \mathrm{Ev}_y)^{[\mathrm{Odd}]}_+$ to $(\mathrm{Ev}_x \mathrm{Ev}_y)^{[\mathrm{Odd}]}_-$ and vice versa, and moreover $\cL$ commutes with conjugation. Hence, the only eigenvalue of $\cL : (\mathrm{Ev}_x \mathrm{Ev}_y)^{[\mathrm{Odd}]}_- \to (\mathrm{Ev}_x \mathrm{Ev}_y)^{[\mathrm{Odd}]}_-$ with positive real part is $\bar \lambda_\star$, and moreover it is simple.

Thus, we have showed that $\cL : (\mathrm{Ev}_x \mathrm{Ev}_y)^{[\mathrm{Odd}]} \to (\mathrm{Ev}_x \mathrm{Ev}_y)^{[\mathrm{Odd}]}$ has two eigenvalues with positive real part: $\lambda_\star, \bar \lambda_\star$, which moreover are simple. From the decomposition $\cL = \cJ \cH$, we notice $\cL^\ast = \cH^\ast \cJ^\ast = -\cH \cJ = -\cH \cL \cH^{-1}$. Since $\cH$ is invertible (it is diagonal with non-zero entries), $\cL^\ast$ is similar to $-\cL$. This proves that the spectrum of $\cL$ is symmetric along the imaginary axis, concluding our proof. 
\end{proof}

\subsection{Computer-assisted winding argument} \label{subsec:CAP}

Our only remaining task is to prove Lemma \ref{lemma:holo_root}. The proof is based on a rigorous winding argument. The essential step is to rigorously enclose the inner product $\langle (\cT - \lambda)^{-1} \cT E_u, E_u \rangle$, needed to compute $\Phi(\lambda)$. We point out that performing rigorous computations of $\cT$ is trivial in our basis $\{ E_{j, k} \}_{(j, k) \in \cI}$, since the operator $\cT$ just propagate to neighbouring modes (see Lemma \ref{lemma:J_computation}). However, rigorously computing the resolvent $(\cT - \lambda )^{-1}$ is much more challenging. In fact, we only need to compute an inner product involving the resolvent and this can be done via the following.
\begin{lemma}[Primal-Dual Error Bound]\label{lem:primal_dual}
Let $\cT$ be a skew-adjoint linear operator on a Hilbert space ${\rm X}$, and for some $\lambda$ with $\text{Re}(\lambda )> 0$, consider its resolvent $\cR = (\cT - \lambda)^{-1}$. Let $v, v^\ast \in {\rm X}$. Let $u_{0}, u^\ast_{0} \in {\rm X}$. Let $r = v - (\cT-\lambda) u_{0}$ and $r^\ast = v^\ast - (\cT-\lambda)^\ast u^\ast_{0}$ be the respective residuals. Then:
\begin{equation}
    \left| \langle \cR v, v^* \rangle_{\rm X} - \left( \langle u_{0}, v^\ast \rangle_{\rm X} + \langle r, u^\ast_{0} \rangle_{\rm X} \right) \right| \leq \frac{ \| r \|_{\rm X}   \| r^* \|_{\rm X} }{| \mathrm{Re}(\lambda )|} \,.
\end{equation}
\end{lemma}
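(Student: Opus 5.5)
The plan is to compare the exact quantity $\langle \cR v, v^\ast\rangle$ with the approximation $\langle u_0, v^\ast\rangle + \langle r, u_0^\ast\rangle$ by exact algebraic identities, and then bound the single remaining error term with the resolvent estimate $\|(\cT-\lambda)^{-1}\|\le 1/|\Re\lambda|$ already used in \eqref{eqn:phi1}. First I set $u := \cR v$ and $u^\ast := \cR^\ast v^\ast = ((\cT-\lambda)^\ast)^{-1} v^\ast$. The latter is well defined because $(\cT-\lambda)^\ast = -\cT - \bar\lambda$ and $\Re\bar\lambda\neq 0$. The same resolvent bound then holds for the adjoint: $\|\cR^\ast\| = \|\cR\| \le 1/|\Re\lambda|$.

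Next I express the errors through the residuals. Since $(\cT-\lambda)u = v$, we have $(\cT-\lambda)(u-u_0) = r$, i.e. $u - u_0 = \cR r$. Similarly $u^\ast - u_0^\ast = \cR^\ast r^\ast$. Then I expand
\begin{equation*}
\langle \cR v, v^\ast\rangle - \langle u_0, v^\ast\rangle = \langle u-u_0, v^\ast\rangle = \langle \cR r, v^\ast\rangle = \langle r, \cR^\ast v^\ast\rangle = \langle r, u^\ast\rangle .
\end{equation*}
Subtracting $\langle r, u_0^\ast\rangle$ leaves the total error
\begin{equation*}
\langle r, u^\ast - u_0^\ast\rangle = \langle r, \cR^\ast r^\ast\rangle = \langle \cR r, r^\ast\rangle .
\end{equation*}
By Cauchy--Schwarz and the resolvent bound, this is at most $\|r\|\,\|r^\ast\|/|\Re\lambda|$, which is exactly the claimed estimate.

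The only point needing care is domain bookkeeping. For the residuals to make sense, $u_0$ must lie in $D(\cT)$ and $u_0^\ast$ in $D(\cT^\ast) = D(\cT)$; in the application these are finite Fourier sums, so this holds. The adjoint identity $\langle \cR r, v^\ast\rangle = \langle r, \cR^\ast v^\ast\rangle$ is valid because $\cR$ is bounded. I would also re-derive the bound $\|\cR\|\le 1/|\Re\lambda|$ from skew-adjointness: $\Re\langle(\cT-\lambda)w,w\rangle = -\Re\lambda\,\|w\|^2$, so $\|(\cT-\lambda)w\|\ge|\Re\lambda|\,\|w\|$, together with surjectivity from the spectral theorem. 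None of this is a real obstacle; the step to get right is keeping the conjugations straight, so that the correction term appears as $\langle r, u_0^\ast\rangle$ with this ordering of arguments.
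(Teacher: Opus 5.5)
Your proof is correct and is essentially the paper's argument: both reduce the error exactly to $\langle \cR r, r^\ast\rangle$ and bound it by Cauchy--Schwarz and $\|\cR\|\le 1/|\Re\lambda|$. The only cosmetic difference is that the paper substitutes $v^\ast = r^\ast + (\cT-\lambda)^\ast u_0^\ast$ directly rather than passing through the exact dual solution $u^\ast = \cR^\ast v^\ast$. Your remarks on domains ($u_0, u_0^\ast \in D(\cT)$) make explicit what the paper leaves implicit.
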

\begin{remark}
Even though the Lemma makes no assumptions on $u_0, u_0^\ast$, one should think of them as approximations of $\cR v$ and $\cR^{\ast}v^\ast$ respectively. The terms $\langle u_0, v^\ast \rangle$ and $\langle r, u^\ast_0 \rangle$ should be thought as terms that we can easily compute rigorously, since they only involve inner products and $\cT$, but not any inverse. The bound $ \| r \|_{\rm X} \, \| r^\ast \|_{\rm X}/|\Re(\lambda)|$ is an error term that we will rigorously account for in our code.
\end{remark}

\begin{proof}
We have
\begin{align*}
    \langle \cR v, v^\ast \rangle_{\rm X}
    &= \langle u_0 + \cR r, v^\ast \rangle_{\rm X} \\
    &= \langle u_0, v^\ast \rangle_{\rm X} + \langle \cR r, r^\ast+(\cT-\lambda)^\ast u^\ast_0 \rangle_{\rm X} \\
    &= \langle u_0, v^\ast \rangle_{\rm X} + \langle r, u^\ast_0 \rangle_{\rm X} + \langle \cR r, r^\ast \rangle_{\rm X}.
\end{align*}
The last term is trivially bounded by $\| \cR r \|_{\rm X} \| r^\ast \|_{\rm X} $, and  since $\cT$ is skew-adjoint, we clearly have $\| \cR \|_{{\rm X}\to {\rm X}} \leq 1/| \mathrm{Re} (\lambda )|$.
\end{proof}

Lemma \ref{lem:primal_dual} will allow us to rigorously enclose the set $\Phi (\lambda_I)$ in a computable complex rectangular region, being $\lambda_I$ another complex rectangular region. In order to compute a winding number on $\Phi$ with such machinery, we will use the following Lemma. 

\begin{lemma}[Rigorous Winding Number]\label{lem:winding_number}
Let $\Phi(\lambda)$ be an holomorphic function and $\Gamma \subset \mathbb{C}$ be a non self-intersecting contour in the complex plane. Assume that there exists a family of sets $\{ \Gamma_i \}_{i=0}^N$ covering $\Gamma \subset \cup_{i=0}^N \Gamma_i$ such that $\Gamma_0 = \Gamma_N$ and $\Gamma \cap \Gamma_i \cap \Gamma_{i+1} \neq \emptyset$ for $i = 0, 1, \ldots N-1$. Moreover, assume that there exists a family of angles $\{\theta_j\}_{j=0}^N$ with $e^{i\theta_0}=e^{i\theta_N}$ be such that: 
\begin{enumerate}
\item The set $\Phi (\Gamma_j)$ is $\theta_j$-oriented, meaning $\Phi (\lambda) e^{-i\theta_j}$ has strictly positive real part for all $z\in \Gamma_i$. 
\item The angles $\theta_i$ are locally consistent, meaning $| \theta_i - \theta_{i+1}| < \pi$ for all $i = 0, 1, 2, \ldots N-1$.
\end{enumerate}
Then, $\theta_N - \theta_0 = 2n\pi$, where $n$ is the number of roots of $\Phi (\lambda ) $ in the interior of $\Gamma$.
\end{lemma}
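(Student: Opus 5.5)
This is a discrete version of the argument principle. The plan is to track a continuous branch of $\arg\Phi$ along $\Gamma$ using the local angles $\theta_j$, and then identify the total increment of that branch with $2\pi$ times the number of zeros.

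\textbf{Step 1: no zeros on $\Gamma$.} Since each $\Gamma_j$ is $\theta_j$-oriented, we have $\Re(\Phi e^{-\im\theta_j})>0$ on $\Gamma\cap\Gamma_j$. Hence $\Phi\neq0$ on $\Gamma$, and the argument principle applies: the number $n$ of zeros inside $\Gamma$, counted with multiplicity, equals $\frac{1}{2\pi}$ times the total change of a continuous argument of $\Phi$ along $\Gamma$. We assume $\Phi$ is holomorphic on a neighbourhood of the closed region bounded by $\Gamma$, which holds in our application because the rectangle lies in $\{\Re\lambda>0\}$.

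\textbf{Step 2: a continuous argument on the pieces.} Parametrize $\Gamma$ by $\gamma:[0,1]\to\CC$ with $\gamma(0)=\gamma(1)$. Using the covering property and $\Gamma\cap\Gamma_i\cap\Gamma_{i+1}\neq\emptyset$, I would choose parameters $0=t_0\le t_1\le\dots\le t_N=1$ with $\gamma([t_{j},t_{j+1}])\subset\Gamma_j$ (up to relabelling). Each $\gamma(t_{j+1})$ then lies in $\Gamma_j\cap\Gamma_{j+1}$. Making this selection rigorous needs the sets to be ordered along the contour; I would read this ordering into the hypothesis, since in the computer-assisted application the $\Gamma_j$ are consecutive small boxes covering consecutive segments.

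On the $j$-th piece, $\Phi e^{-\im\theta_j}$ takes values in the right half-plane. There the principal argument gives a continuous choice
\[
\arg\Phi(\gamma(t))=\theta_j+\mathrm{Arg}\big(\Phi(\gamma(t)) e^{-\im\theta_j}\big)\in\big(\theta_j-\tfrac\pi2,\theta_j+\tfrac\pi2\big).
\]

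\textbf{Step 3: gluing at the junctions.} At the junction point $p=\gamma(t_{j+1})\in\Gamma_j\cap\Gamma_{j+1}$, both determinations are values of the argument of $\Phi(p)$, so they differ by a multiple of $2\pi$. One lies within $\pi/2$ of $\theta_j$, the other within $\pi/2$ of $\theta_{j+1}$, and $|\theta_j-\theta_{j+1}|<\pi$. Therefore the two values differ by strictly less than $2\pi$, hence they coincide. The piecewise definitions thus glue into a single continuous argument along all of $\Gamma$.

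\textbf{Step 4: total increment.} At the closing point $\gamma(0)=\gamma(1)$, the argument at the start lies within $\pi/2$ of $\theta_0$ and the argument at the end lies within $\pi/2$ of $\theta_N$. Their difference is $2\pi n$, and since $e^{\im\theta_0}=e^{\im\theta_N}$, the difference $\theta_N-\theta_0$ is a multiple of $2\pi$. Both endpoint values arise from the same number $\Phi(\gamma(0))$ measured relative to $\theta_0$ and to $\theta_N$ via the principal $\mathrm{Arg}$. Because $e^{-\im\theta_0}=e^{-\im\theta_N}$, the principal part $\mathrm{Arg}(\Phi e^{-\im\theta})$ is identical at the two ends, so the increment equals exactly $\theta_N-\theta_0$. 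By Step 1 this increment is $2\pi n$, which is the claim.

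The main obstacle is Step 2: formalizing the ordering of the covering so that consecutive indices correspond to consecutive arcs. I would resolve it by assuming each $\Gamma\cap\Gamma_j$ is a connected arc, enumerated along the orientation of $\Gamma$.
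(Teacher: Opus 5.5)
Your proof is correct and is essentially the paper's argument: the paper defines the branches $\log_k(w)=\mathrm{Log}(w e^{-\im\theta_k})+\im\theta_k$ on the half-planes and integrates $\Phi'/\Phi$ over arcs between junction points $w_j\in\Gamma\cap\Gamma_j\cap\Gamma_{j+1}$. It then shows the mismatches at the junctions vanish because $|\theta_j-\theta_{j+1}|<\pi$, and closes up using $e^{\im\theta_0}=e^{\im\theta_N}$, which is your continuous-argument gluing written with logarithms. The ordering assumption you flag in Step 2 is also used implicitly in the paper, which simply asserts that the arc from $w_j$ to $w_{j+1}$ lies in $\Gamma_{j+1}$; this holds in the application, where each $\Gamma_i$ is a box containing the straight segment between $w_i$ and $w_{i+1}$.
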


\begin{remark} \label{rem:winding}
Once values of $\theta_i'$ are found ensuring condition (1), it is trivial to ensure (2) by iteratively considering $\theta_i = 2\pi m_i + \theta_i'$ for some $m_i \in \mathbb Z$ such that $| \theta_i - \theta_{i-1} | \leq \pi$, for any $i \geq 1$. The possibility $| \theta_i - \theta_{i-1}| = \pi$ can be discarded using $\Phi (\Gamma_i) \cap \Phi (\Gamma_{i-1}) \neq \emptyset$, ensuring (2) is satisfied.
\end{remark}
\begin{proof}
 We partition the contour $\Gamma$ by choosing intersection points $w_j \in \Gamma_j \cap \Gamma_{j+1} \cap \Gamma$ for $j=0, \dots, N-1$. Let $w_N = w_0$. We compute $n$ with the standard complex analysis argument principle, and we decompose the contour integral into a sum of integrals over the arcs $\gamma_{j+1}$ connecting $w_j$ to $w_{j+1}$. Thus:
\begin{equation} \label{eq:winding}
    n = \frac{1}{2\pi i} \oint_\Gamma \frac{\Phi'(\lambda)}{\Phi(\lambda)} d\lambda = \frac{1}{2\pi i} \sum_{j=0}^{N-1} \int_{w_j}^{w_{j+1}} \frac{\Phi'(\lambda)}{\Phi(\lambda)} \, d\lambda.
\end{equation} 
The arc $\gamma_{j+1}$ is contained entirely within $\Gamma_{j+1}$, and since $\Phi(\Gamma_k)$ is $\theta_k$-oriented, the image $\Phi(\Gamma_k)$ lies in the half-plane $H_k = \{ w \in \mathbb{C} : \Re(w e^{-i\theta_k}) > 0 \}$. On this domain, we define a specific branch of the logarithm, $\log_k : H_k \to \mathbb{C}$, given by
\begin{equation}
    \log_k(w) = \mathrm{Log}(w e^{-i\theta_k}) + \im\theta_k,
\end{equation}
where $\mathrm{Log}$ denotes the principal branch of the logarithm. Hence
\begin{align*}
    \oint_\Gamma \frac{\Phi'(\lambda)}{\Phi (\lambda)}d\lambda &= \sum_{j=0}^{N-1} \left[ \log_{j+1}(\Phi(w_{j+1})) - \log_{j+1}(\Phi(w_j)) \right] \\
    &= \im (\theta_N - \theta_0) + \sum_{j=0}^{N-1} \left[ \log_j(\Phi(w_j)) - \log_{j+1}(\Phi(w_j)) \right], 
\end{align*}
where in the last identity we used that $\log_N (\Phi(w_N)) - \log_0 (\Phi(w_0)) = \im (\theta_N - \theta_0)$ since $e^{\im \theta_N} = e^{\im\theta_0}$ and $w_N = w_0$.

Finally, combining this with \eqref{eq:winding}, we just need to show that all terms into the right sum are zero. Take $w_j$ for $j=1, \dots, N-1$, and let $\phi = \Phi(w_j)$. Since $w_j \in \Gamma_j \cap \Gamma_{j+1}$, $\phi$ is in the domain of both $\log_j$ and $\log_{j+1}$.
\begin{align*}
    \log_j(\phi) - \log_{j+1}(\phi) &= \left( \mathrm{Log}(\phi e^{-\im\theta_j}) + \im\theta_j \right) - \left( \mathrm{Log}(\phi e^{-\im\theta_{j+1}}) + \im\theta_{j+1} \right) \\
    &= \mathrm{Log}(\phi e^{-\im\theta_j}) - \mathrm{Log}(\phi e^{-\im\theta_{j+1}}) + \im(\theta_j - \theta_{j+1}) \\
    &= \mathrm{Log} \left(\frac{\phi e^{-\im\theta_j}}{\phi e^{-\im\theta_{j+1}}} \right) + \im(\theta_j - \theta_{j+1}) = \mathrm{Log} ( e^{\im(\theta_{j+1} - \theta_j )} ) + \im (\theta_j - \theta_{j+1}) = 0
\end{align*}
In the third line we used in an essential way that the arguments of both $\Log$ functions have strictly positive real part (from the $\theta_i$-oriented assumption) so that $\Log (A) - \Log (B) = \Log (A/B)$ is valid. In the last equality, we also used the compatibility condition $|\theta_j - \theta_{j+1}| < \pi$ to ensure $\Log (e^{\im (\theta_{j+1} - \theta_j )} ) = \im(\theta_{j+1} - \theta_j)$.
\end{proof}

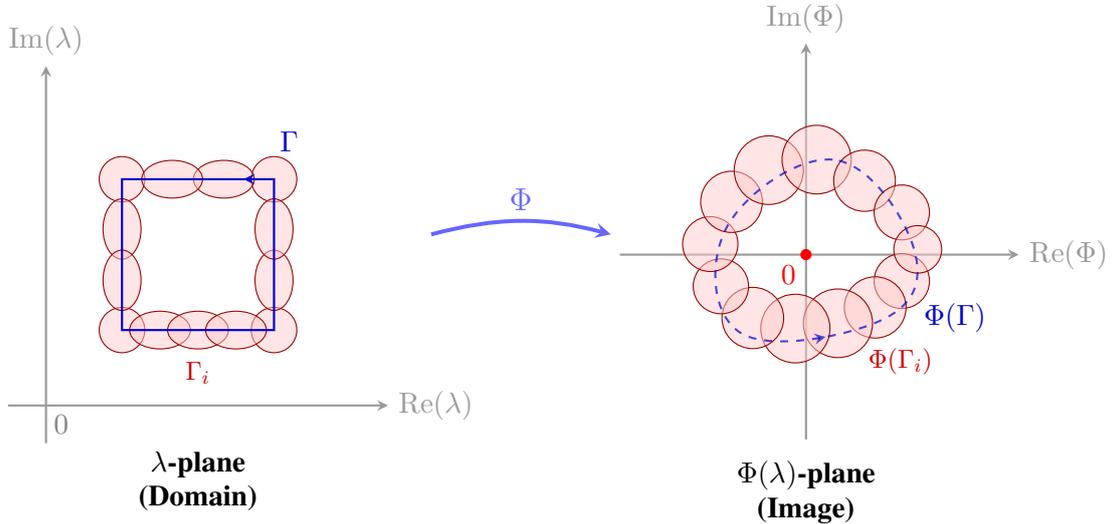
\begin{figure}[ht]
    \centering
    
    \begin{tikzpicture}[>=stealth, scale=1.0]

    \tikzset{
        complexaxis/.style={->, thick, gray!80},
        contour/.style={thick, blue!80!black, decoration={markings, mark=at position 0.55 with {\arrow{>}}}, postaction={decorate}},
        mapping/.style={->, line width=1.5pt, blue!60, shorten >= 2pt, shorten <= 2pt},
        covering/.style={fill=red!20, draw=red!60!black, thin, fill opacity=0.5}
    }

  \begin{scope}[local bounding box=leftscope]
        \draw[complexaxis] (-0.5,0) -- (4.5,0) node[right] {$\mathrm{Re}(\lambda)$};
        \draw[complexaxis] (0,-0.5) -- (0,4.5) node[above] {$\mathrm{Im}(\lambda)$};
        \node[gray] at (0.2, -0.25) {$0$};

        \draw[covering] (3, 1) circle (0.3); 
        \draw[covering] (3, 3) circle (0.3); 
        \draw[covering] (1, 3) circle (0.3); 
        \draw[covering] (1, 1) circle (0.3); 

        
        \foreach \x in {1.5, 2.0, 2.5} {
            \draw[covering] (\x, 1) ellipse (0.4 and 0.25);
        }
        \node[red!80!black, below, scale=0.9] at (2.0, 0.7) {$\Gamma_i$};

        \foreach \y in {1.66, 2.34} { 
            \draw[covering] (3, \y) ellipse (0.25 and 0.4); 
        }

        \foreach \x in {2.34, 1.66} { 
            \draw[covering] (\x, 3) ellipse (0.4 and 0.25); 
        }

        \foreach \y in {2.34, 1.66} { 
            \draw[covering] (1, \y) ellipse (0.25 and 0.4); 
        }

        \coordinate (A) at (1,1);
        \coordinate (B) at (3,1);
        \coordinate (C) at (3,3);
        \coordinate (D) at (1,3);
        \draw[contour] (A) -- (B) -- (C) -- (D) -- cycle;
        \node[blue!80!black, font=\bfseries] at (3.2, 3.5) {$\Gamma$};

        \node[align=center, font=\bfseries] at (2.0, -1) {$\lambda$-plane\\(Domain)};
    \end{scope}

    \draw[mapping, bend left=15] (5, 2.25) to node[above, font=\large] {$\Phi$} (7.5, 2.25);

    \begin{scope}[shift={(10,2)}, scale=0.7, local bounding box=rightscope]
        
        \draw[complexaxis] (-3.5,0) -- (4.0,0) node[right] {$\mathrm{Re}(\Phi)$};
        \draw[complexaxis] (0,-3.5) -- (0,4.0) node[above] {$\mathrm{Im}(\Phi)$};
        
        \fill[red] (0,0) circle (3pt); 
        \node[red, below left] at (0,0) {$0$};

        \draw[covering] (1.8, -0.5) circle (0.52);
        \draw[covering] (1.3, -1.0) circle (0.58);
        \node[red!80!black, below right, scale=0.9] at (1.0, -1.6) {$\Phi(\Gamma_i)$};
        \draw[covering] (0.6, -1.3) circle (0.65);
        \draw[covering] (-0.2, -1.4) circle (0.65);
        \draw[covering] (-1.0, -1.2) circle (0.58);
        \draw[covering] (-1.6, -0.6) circle (0.52);
        \draw[covering] (-1.8, 0.2) circle (0.52);
        \draw[covering] (-1.4, 1.0) circle (0.58);
        \draw[covering] (-0.7, 1.6) circle (0.65);
        \draw[covering] (0.2, 1.8) circle (0.65);
        \draw[covering] (1.1, 1.4) circle (0.58);
        \draw[covering] (1.8, 0.8) circle (0.52);
        \draw[covering] (2.1, 0.1) circle (0.45);

        \draw[contour, dashed, opacity=0.7] plot [smooth cycle, tension=0.7] coordinates {
            (2.0, 0.2) (0.5, 1.8) (-1.5, 0.5) (-1.2, -1.5) (1.5, -1.2)
        };
        \node[blue!80!black] at (2.8, -1.2) {$\Phi(\Gamma)$};

        \node[align=center, font=\bfseries] at (0, -4.5) {$\Phi(\lambda)$-plane\\(Image)};
    \end{scope}

\end{tikzpicture} 
    \caption{
    Visual representation of the winding number argument. The left panel shows the covering of the contour $\Gamma$, with each pair of consecutive $\Gamma_i, \Gamma_{i+1}$ intersecting over the contour $\Gamma$. The right panel illustrates a rigorous enclosure of $\Phi(\Gamma_i)$ . The small circular regions represent the image sets $\Phi(\Gamma_i)$, which satisfy the $\theta_i$-oriented condition (lying in a half-plane away from the origin). In our computer-assisted application, the enclosures will be balls, and the $\theta_i$ will point to the center of those balls, although this is not necessary for the validity of the Lemma.}
\end{figure}

\begin{proof}[Proof of Lemma \ref{lemma:holo_root}] The proof is computer-assisted and the code TaylorGreen\_CAP.ipynb can be found in the supplementary material (downloading the arXiv submission of this paper). The code was written and executed in Sage (\textit{SageMath-10.7}). It took 20 seconds to run in a personal laptop. The code consists of three notebook cells implementing the argument, which we describe below.

The first cell constructs a finite truncation of the matrices $\cJ, \cH_s, \cH_u, \cT$ over the base $E_{j, k}$. We also construct a weight vector $\cW$ encoding $\| E_{j, k} \|_{L^2}^2$, so that $\langle u, v \rangle_{L^2} = \sum_{(j, k)\in \cI} u_{j, k} v_{j, k} \cW_{j, k}$ being $u_{j, k}, v_{j, k}$ the $E_{j, k}$ coefficients of $u$ and $v$. We consider $\cI_t$ to be the subset of $\cI$ formed by $j+k \leq 160$, and map $\cI_t$ to a standard array of consecutive natural numbers. The matrices $\cH_s, \cH_u$ are implemented via \eqref{eq:lesdiablerets}, $\cJ$ via Lemma \ref{lemma:J_computation}, and $\cT$ as $\cH_s^{1/2} \cJ \cH_s^{1/2}$. We point out that this allows to rigorously compute objects such as $\cH_u v$, $\cH_s v$, $\langle u, v \rangle_{L^2}$, etc. as long as one works with interval arithmetics operations, which are guaranteed to rigorously enclose the result (see subsection \ref{subsec:intro_literature} for a high-level description of interval arithmetics and related literature). However, in the cases of $\cJ$ and $\cT$, rigorous computation is only guaranteed if $v$ is supported only on modes with $j+k \leq 160-2 = 158$, since otherwise there would be a component of the result at modes outside $\cI_t$. Thus, we define the set $\cI_{tt}$ to be the set of trusted indices: $\cI_{tt} = \{ (j, k)\in \cI_t \; : \; j+k \leq 158 \}$, and note that $\cJ u, \cT u$ can be computed rigorously as long as $u \in \text{span} \{ E_{j, k} \}_{(j, k) \in \cI_{tt}}$. The implementations are based on Real Ball Fields and Complex Ball Fields, that allow for rigorous, interval-arithmetic computations. We also have an additional numerical (non-rigorous) implementation of $\cT$ in numpy.scipy that will be used to obtain the approximations needed to use Lemma \ref{lem:primal_dual}. Indeed, since Lemma \ref{lem:primal_dual} works for any $u_0, u_0^\ast$, it is enough to obtain these candidates with a (non rigorous) numerical approximation of the resolvent with $\cT$ obtained in numpy.scipy. 

In the second cell we implement a rigorous computation of a enclosure for $\Phi(\lambda^I)$, being $\lambda^I$ a complex interval. This is based on Lemma \ref{lem:primal_dual}. The function ``solve\_trusted\_least\_squares" yields a numerical approximation $u \approx (\cT - \lambda)^{-1} (v)$ which is supported on the trusted modes $\cI_{tt}$, obtained by minimizing $\| (\cT - \lambda)^{-1} v \|_{L^2}^2$ over $\text{span} \{ E_{j, k} \}_{(j, k) \in \cI_{tt}}$. Since that approximation is not rigorous, we take $\lambda$ to be the center of the interval $\lambda_I$. Then, in ``compute\_primal\_dual\_rigorous", such numerical approximations are enclosed using complex intervals in order to rigorously treat the main terms and error terms of Lemma \ref{lem:primal_dual}. Then, in ``Phi" we rigorously enclose $\Phi(\lambda_I)$ via formula \eqref{eq:new_Phi}.

Lastly, we perform the winding number computation of $\Phi$ along $\Gamma$ via Lemma \ref{lem:winding_number}. Concretely, we define 26 points $w_i \in \Gamma$ (which contains the corners $z_i$), with $w_0 = w_{26}$ and define $\Gamma_i$ to be a complex interval containing both $w_i$ and $w_{i+1}$. In particular, since the part of $\Gamma$ between $w_i$ and $w_{i+1}$ is a straight segment, it will be contained in $\Gamma_i$, and since $w_{i+1} \in \Gamma_i \cap \Gamma_{i+1} \cap \Gamma$, we satisfy the corresponding hypothesis from Lemma \ref{lem:winding_number}. In order to check that $\Phi (\Gamma_i)$ is $\theta_i$-oriented, we use our rigorous enclosure for $\Phi(\Gamma_i)$. Our $\theta_i$ do not necessarily satisfy $| \theta_i - \theta_j | < \pi$ but this is fixed a posteriori by adding multiples of $2\pi$ as specified in Remark \ref{rem:winding}. Finally, we compute $\frac{\theta_{26} - \theta_0}{2\pi}$, and we obtain it is rigorously enclosed in $1 \pm 2.38\cdot 10^{-60}$. Since $\Phi$ is holomorphic this must be an integer, and therefore it is $1$, concluding the proof of Lemma \ref{lemma:holo_root}.

\end{proof}

\subsection{Proof of Theorem \ref{linthm}}

We recall the decompositions of $L_0^2(\mathbb T^2)$ into invariant spaces of $\cL_E$ due to \eqref{EvOddsplitting} and \eqref{harmonicsplitting}. Due to Lemma \ref{stability_easy_cases} and Proposition \ref{prop:OddEvstability}, we know that the spectrum of $\cL_E$ restricted to any of those invariant subspaces is purely imaginary, with the exception of $(\mathrm{Ev}_x \mathrm{Ev}_y )^{[\mathrm{odd}]}$. In that subspace, Proposition \ref{prop:EvEvstability} shows that there exist exactly four simple eigenvalues outside the imaginary axis, given by $\{ \lambda_\star, \bar \lambda_\star, -\lambda_\star, -\bar \lambda_\star \}$. 
\begin{remark}[On the regularity of the eigenfunction]\label{rmk:nonsmooth}

The eigenfunction $\omega_\star$ in Theorem \ref{linthm}, with $\cL_E \omega_\star = \lambda_\star \omega_\star$, is expected to be smooth on any superlevel set of the streamfunction $\{\psi_E > h\}$ (resp. $\{\psi_E <- h\}$) for any $h>0$, whereas it should have limited regularity at the hyperbolic point, more precisely we expect that $\omega_\star\notin H^{1-\Re \lambda_\star}$ in any neighborhood of the hyperbolic point.

To see this, we rewrite the eigenvalue equation as $( \cJ - \lambda_\star ) \cH \omega_\star = - 2 \lambda_\star \Delta^{-1} \omega_\star
$, and, using the formula for the resolvent of $\cJ - \lambda_\star $ in terms of the smooth flow $X$ of the Taylor-Green vortex,
\begin{equation}
    \label{eqn:invert-transp}
    \cH \omega_\star= 2 \lambda_\star\int_0^\infty e^{-s\lambda_\star}  \Delta^{-1} \omega_\star\circ X(s) \, ds.
\end{equation}
When restricting to any superlevel set $\{\psi_E > h\}$ for any $h>0$,  the right hand side gains two derivatives (to fix the ideas, at first  $\omega_\star \in L^2$ and $\Delta^{-1} \omega_\star\circ X \in H^2$), and this regularity is transferred to $ \cH \omega_\star$ through \eqref{eqn:invert-transp}, and therefore to $\omega_\star$ by elliptic regularity. {Note that this is possible because the flow lines are periodic orbits with smooth and bounded period (with estimates degenerating as $h\to 0$), meaning that each derivative in \eqref{eqn:invert-transp} causes a loss of order $\mathcal{O}(s)$, which can be easily absorbed by the exponential decay.} This argument can be bootstrapped to get smoothness.

The irregularity of $\cH \omega_\star $, and therefore of $ \omega_\star $, at the origin can be guessed heuristically by noticing that the eigenvalue equation rewrites as 
\begin{equation}\label{eq:usingfE}
(\cJ - \lambda_\star ) (\cH \omega_\star 
-
2\Delta^{-1} \omega_\star (0, 0))= -2\lambda_\star \Delta^{-1}\omega_\star 
+2\lambda_\star \Delta^{-1} \omega_\star (0, 0)\,.
\end{equation}
and therefore along the axis $x = 0$ simplifies to
\begin{equation}\label{eq:eigforx=0}
(\sin (y) \partial_y - \lambda_\star ) f(0, y) = E(0, y)
\end{equation}
where 
$$f := \cH \omega_\star (x,y)
-
2\Delta^{-1} \omega_\star (0, 0)\,,\quad E := -2\lambda_\star \Delta^{-1}\omega_\star (x,y)
+2\lambda_\star \Delta^{-1} \omega_\star (0, 0).$$ 
Here, by \eqref{eqn:somereg} we have that $\omega_\star \in H^{1/5}$; hence $E \in H^{2+1/5}$, vanishes at $(0, 0)$ and is even in both variables, which implies that $|E(x, y)| \lesssim (|x|+|y|)^{1+1/5}$. 
\\

Let $M(y) := \exp \int_{\pi/2}^y \frac{-\lambda_\star}{\sin (y')} dy' = |\tan (y/2)|^{-\lambda_\star}$ and observe that $M'(y)= - \frac{\lambda_\star}{\sin(y)}M(y)$.
Multiplying the equation by $M(y)$ and integrating, since $M(\pi/2)=1$, 
\begin{equation} \label{eq:formula_G}
M(y) f(0,y) = f(\pi/2) 
-\int_{\pi/2}^y \frac{E(0,y') M(y')}{\sin (y')}  dy'\, .
\end{equation}
The integral above is well defined at $\pi$ since $M(y') / \sin (y')$ is integrable at $\pi$, and it is well-defined at $0$ due to the cancellation of $E$ at $(0, 0)$. Now, assume that the right-hand side in \eqref{eq:formula_G} is different from zero as $y\to \pi$. We would then get that $f(y)\approx 1/M(y)$ as $y\to \pi,$ and therefore $| \cH \omega (0, y) | \gtrsim |y-\pi|^{-\text{Re}(\lambda_\star)}$ as $y \to \pi$. This would imply that $f ( y) \notin H^{1/2-\text{Re}(\lambda_\star )}_y(\TT)$, hence $f \notin H^{1-\text{Re}(\lambda_\star )} (\mathbb T^2)$ or, equivalently, $\omega_\star \notin H^{1 - \text{Re}(\lambda_\star )} (\mathbb T^2)$. 
On the other hand, if the right-hand side in \eqref{eq:formula_G} is zero at $\pi$ one can work with the expression \eqref{eqn:invert-transp}, subtracting $2\lambda_\star \Delta^{-1} \omega_\star (0, 0)= 2\int_0^\infty e^{-s\lambda_\star}  \Delta^{-1} \omega_\star(0,0) \, ds$
on both sides, to see that the expected behavior of $\cH \omega_\star (x,y)
-
2\Delta^{-1} \omega_\star (0, 0)$ along the $x$ axis also appears on neighboring integral curves

\begin{equation} \label{eq:f_formula}
\cH \omega_\star (x,y)
-
2\Delta^{-1} \omega_\star (0, 0) = 2\lambda_\star | \tan(y/2) |^{\lambda_\star} I(\omega_\star)  + o(|(x,y)|^{6/5} ),
\end{equation}
where we define 
\begin{equation}\label{def:Iomegastar}
I(\omega_\star)\coloneqq
    \int_{0}^\pi \frac{\Delta^{-1}\omega_\star (0, y') - \Delta^{-1} \omega_\star (0, 0)}{\sin (y')} \frac{1}{|\tan(y'/2)|^{\lambda_\star}}  dy' .
\end{equation}
One is left to showing that this integral is nonzero, which can for instance be easily verified numerically.
\end{remark}

\subsection{Instability in Navier-Stokes}\label{sec:NS}

\begin{figure}[htbp]
    \centering
    \begin{subfigure}[b]{0.48\textwidth}
        \centering
        \includegraphics[width=\textwidth]{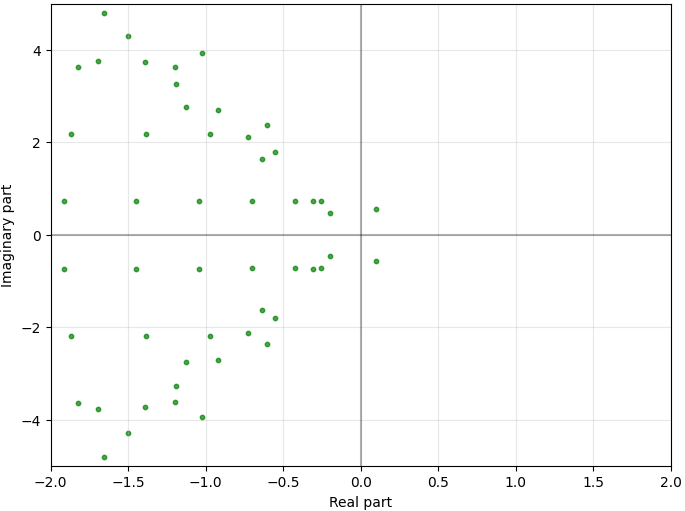}
        \label{fig:eigenfunction2}
    \end{subfigure}
    \hfill 
    \begin{subfigure}[b]{0.48\textwidth}
        \centering
        \includegraphics[width=\textwidth]{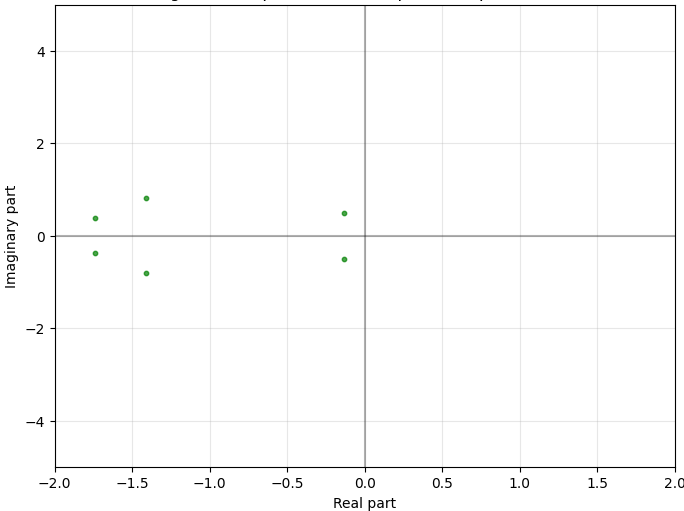}
        \label{fig:instability2}
    \end{subfigure}
    \caption{ Numerical spectrum of $\cL_\nu$. Left plot corresponds to $\nu = 0.01$ and right one to $\nu = 0.2$. We observe how a complex eigenpair $(\lambda_\nu, \bar \lambda_\nu)$ with $\Re (\lambda_{\nu} ) > 0$ persists for $\nu = 0.01$ but becomes stable at $\nu = 0.2.$
    }
    \label{fig:spectra_dissipation}
\end{figure}

We end this section with the proof of Corollary \ref{cor:NS}. 

We first observe that the space $(\rm{Ev}_x \rm{Ev}_y)^{[\rm{odd}]}_+$ in \eqref{EvEVpm} is invariant under $\cL_\nu=\cL_E + \nu \Delta$ for every $\nu$, as one can readily see by the invariance of $\Delta$ on each element of the basis of $(\rm{Ev}_x \rm{Ev}_y)^{[\rm{odd}]}_+$ in \eqref{def:Ejk}.

By means of the decomposition $\cL_E = \cJ(\cH_s + \cH_u)$ from \eqref{eq:lesdiablerets} and isolating the rank-$1$ perturbation $\cH_u$, we recast the eigenvalue problem $(\cL_\nu - \lambda)f = 0$ into
\begin{equation} \label{eq:viscous_split}
    (\cJ \cH_s + \nu \Delta - \lambda) f = - \cJ \cH_u f \,.
\end{equation}
As in the proof of Theorem \ref{thm:instability_criterion}, we express the eigenfunction equation as
\begin{equation} \label{eq:eigen_dissipation}
(\cT_\nu - \lambda) \cH_s^{\frac12} f = - \cT \cH_u (\cH_s^{-\frac12} f ) 
\end{equation}
where 
$\cT_\nu \coloneqq \cT + \nu \Delta$, with $ \cT = \cT_0 = \cH_s^{1/2} \cJ \cH_s^{1/2}$. 
To obtain \eqref{eq:eigen_dissipation} we also exploited that the operators $\Delta, \cH_s$ and $ \cH_u$ commute because they are all diagonal in the basis $\{ E_{j, k} \}_{(j, k) \in \cI}$ from \eqref{def:Ejk}.

The operator $\cT_\nu$ has only pure-point spectrum because it is a perturbation of the self-adjoint operator $\nu \Delta$ by the relatively compact operator $\cT$. If $\lambda$ is an eigenvalue of $\cT_\nu$, then, by classical energy estimates, we have $\Re(\lambda) \leq -\nu $. Consequently, for any $\lambda$ with $\Re(\lambda) > 0$, one obtains the uniform bound on the resolvent $\| (\cT_\nu - \lambda)^{-1} \|_{L^2\to L^2} \leq \frac{1}{\Re(\lambda)}$. 

As a consequence, we can take the resolvent in \eqref{eq:eigen_dissipation} for any $\lambda \in \mathbb C$ with $\mathrm{Re}(\lambda ) > 0$. One can proceed as in Theorem \ref{thm:instability_criterion} and deduce that there exist eigenvalues with $\mathrm{Re}(\lambda) > 0$ to $\cL_\nu$ if and only if the holomorphic function 
\begin{equation}
    \Phi_\nu(\lambda) := 1 - \frac{8}{3} \frac{1}{\| E_u \|_X^2} \langle (\cT_\nu -\lambda)^{-1} \cT E_u, E_u \rangle_{L^2}
\end{equation}
vanishes at $\lambda$, with $E_u = E_{1, 0} = \cos (x) - \im \cos(y)$.
We claim that $\lim_{\nu \to 0} \Phi_\nu  = \Phi$ pointwise and finish the proof assuming that claim. Then, since $\{\Phi_\nu\}$ is uniformly bounded on compact subsets of $\Re(\lambda)>0$, by Montel's convergence theorem it follows that $\lim_{\nu \to 0} \Phi_\nu = \Phi$ uniformly on every compact subset of $\{\mathrm{Re}(\lambda) > 0\}$. By Hurwitz's theorem, there will then be a family $\lambda_\nu$ of simple roots of $\Phi_\nu$, with $\lim_{\nu \to 0} \lambda_\nu = \lambda_\star$. 

Finally, let us prove the pointwise limit $\lim_{\nu \to 0} \Phi_\nu  = \Phi$. In order to show that, it suffices to show the weak convergence $f_\nu := (\cT + \nu \Delta - \lambda)^{-1} \cT E_u \rightharpoonup (\cT - \lambda)^{-1} \cT E_u =: f_0$ as $\nu \to 0^+$. We have that $\| f_\nu \| \leq \frac{\| \cT E_u \|}{\rm{Re}(\lambda )}$, so by Banach-Alaoglu, for every sequence $\{ \nu_n \}_n$, we have a subsequence $\nu_{n_i}$ such that $f_{\nu_i} \rightharpoonup f$. Therefore, for any test function $\phi$, we see
$$ \langle \cT E_u, \phi \rangle = \langle (\cT + \nu_{n_i} \Delta - \lambda ) f_{\nu_{n_i}}, \phi\rangle = \langle f_{\nu_{n_i}}, (-\cT + \nu_{n_i} \Delta - \bar \lambda) \phi \rangle \xrightarrow[i \to \infty]{} \langle f, (-\cT - \bar \lambda) \phi \rangle$$
We obtain that $(\cT - \lambda)f = \cT E_u$ and therefore $f = f_0$. Since every sequence $\{ \nu_n \}$ contains a subsequence along which $f_\nu$ converges weakly to $f_0$, we can conclude the desired weak convergence.

\begin{remark}[Orbital stability of the viscous operator in certain invariant spaces] In any $\cL_E$-invariant subspace ${\rm X}$ on which the operator $\cH$ has no negative directions, one readily obtains orbital stability for the linear system $\de_t w = \cL_\nu w$. Indeed, let $f\in {\rm X}$ be an eigenvector of $\cL_\nu$ associated with the eigenvalue $\lambda$. By mimicking the proof of Lemma \ref{lemma:uniqueness_algebra} and introducing the well-defined positive semidefinite self-adjoint operator $\cH^{\frac12}$,
we have
\begin{align*}
\Re(\lambda) \|\cH^{\frac12} f\|_{L^2}^2 &=
\Re(\lambda \langle f,\cH f\rangle) = \Re(\langle \cL_\nu f,\cH f\rangle)  \\
&=\Re( \langle \cJ \cH f,\cH  f\rangle  + \nu\langle \Delta  \cH^{\frac12}    f,\cH^{\frac12} f\rangle) \,=-\nu \|\nabla \cH^\frac12 f\|^2_{L^2},
\end{align*}
where, in the last step, we used that $\cJ$ is skew-adjoint and $\Delta$ and $\cH^{\frac12}$ commute. By exploiting the Poincaré inequality for $\TT^2$, we deduce that $\Re(\lambda) \| \cH^{\frac12}    f\|_{L^2}^2 \leq  - \nu   \| \cH^{\frac12}    f\|_{L^2}^2$
When $\cH f \neq 0$, this directly implies that $\Re(\lambda) \leq -\nu$. Vice versa, if $\cH f= 0$, then $\cL_Ef=0$ and $f$ is an eigenvector of $\Delta$, which has only negative integer eigenvalues. In conclusion $\sigma(\cL_\nu|_{\rm X}) \subset \big\{z \in \CC : \Re(z) \leq -\nu\big\}$, making the origin the global exponential attractor of the linear system.
\end{remark}

\section{Generalization to $(m,n)$ Taylor-Green Vortices}\label{sec:nmTG}
In this section, we consider the more general family of vortices:
\begin{equation}\label{eqn:psimn}
    \psi_{m, n}(x,y) \coloneqq -\sin(mx)\sin(ny) \,, \quad n,m \in \mathbb{N} \,.
\end{equation}
The corresponding vorticity is $\omega_{m, n} = \Delta \psi_{m, n} = (m^2+n^2)\psi_{m, n}$. Analogously to \eqref{cL}, the linearized operator around this equilibrium has the complex Hamiltonian structure 
\begin{equation} \label{eq:JH_mn}
    \cL_{m,n} = \cJ_{m,n} \cH_{m,n}\quad \text{with} \quad\cJ_{m,n} = -\{\psi_{m, n}, \cdot\} \,, \quad \cH_{m,n} = 1 + (m^2+n^2)\Delta^{-1} \,.
\end{equation}

It is clear that the unstable eigenvalues of Theorem \ref{linthm} produce linear instabilities of the $(n,n)$-vortex, for every $n$, by scaling, as the following remark details.

\begin{remark}[Scaling the unstable eigenvalues of the $(1,1)$ Taylor-Green vortex]\label{rem:dilation_TG} For any $n \in \mathbb{N}$, the dilation operator $Df (x, y) = f(nx, ny)$ has the properties 
\begin{equation*}
\cJ_{n, n} D = n^2 D \cJ_{1, 1}, \quad \mbox{ and } \quad \cH_{n, n} D = D \cH_{1, 1} \Rightarrow \cL_{n, n} D = n^2 D \cL_{1, 1}
\end{equation*}
In particular, if we now take $w \in L_0^2(\mathbb T)$ to be an eigenfunction of $\cL_{1, 1}$ with eigenvalue $\lambda$, we see
\begin{equation*}
\cL_{n, n} D w = n^2 D \cL w = n^2 \lambda D w,
\end{equation*}
so $Dw$ is an eigenfunction associated to the  eigenvalue $n^2 \lambda$ of $\cL_{n, n}$. Hence, Theorem \ref{linthm}, the spectrum of the linearized operator $\cL_{n, n}$ for the $(n, n)$ vortex satisfies:
\begin{equation}
    \{ n^2\lambda_\star, -n^2\lambda_\star, n^2\bar \lambda_\star, -n^2\bar \lambda_\star \} \subset \sigma (\cL_{n, n}) \,.
\end{equation}
\end{remark}

We expect, however, the general $(m,n)$-vortices have many more instabilities with respect to  Theorem \ref{linthm}. To this end, we first identify many invariant subspaces of $\cL_{m,n} $; to keep the analysis short, we then analyze one of them where we find a new type of real unstable eigenvalue with respect to the four complex ones of the Taylor-Green vortex in Section \ref{sec:nneqm}. Then, in Section~\ref{sec:2-2} we outline the arguments to fully characterize the spectrum of the $(2,2)$ case, and we present numerical evidence that the unstable spectrum does not contain only the scaled $(1,1)$ ones, but also real instabilities.

\subsection{Instability for $m\neq n$}
\label{sec:nneqm}
We first identify many invariant subspaces of $\cL_{m,n} $, given by
\begin{equation} \label{eq:nm_subdivision}
L_0^2(\mathbb T^2) = \bigoplus_{\kappa = 0}^{n-1} \bigoplus_{\kappa_2 = 0}^{m-1} \bigoplus_{\rm C \in \{ 0, 1 \} } L_0^2(\mathbb T^2)^{[\rm C]}_{[\kappa_1, \kappa_2]}
\end{equation}
where the subindices $\kappa_1$ (resp. $\kappa_2$) denote the congruence of the harmonics in $x$ (resp. $y$) modulo $n$ (resp. $m$). The term $\rm C$ denotes the ``checkerboard" invariance, namely, fixing our representatives so that $\kappa_1 \in \{ 0, 1, \ldots m-1 \}$ and $\kappa_2 \in \{ 0, 1, \ldots n-1 \}$, the parity of $\frac{j-\kappa_1}{m} + \frac{k - \kappa_2}{n}$ remains invariant. A basis is given by $\exp (i[ (j'm+\kappa_1)x + (k'n+\kappa_2) y ])$ with $j' +k' \equiv \rm C$ (mod 2). Moreover, for the case $\kappa_1 = \kappa_2 = \rm C = 0$, we naturally exclude the frequency $j' = k' = 0$ due to the zero-average constraint.
It is evident that those subspaces are invariant under $\cH_{m,n}$. In order to check that they are invariant under $\cJ_{m,n}$, we notice that:
\begin{align*}
&\cJ_{m,n} e^{\im  (jx + ky)} \\
&\quad=  \left( -\frac{e^{\im mx} - e^{-\im mx}}{2\im} \cdot \frac{e^{\im ny} + e^{-\im ny}}{2} \im j + \frac{e^{\im mx} + e^{-\im mx}}{2} \frac{e^{\im ny} - e^{-\im ny}}{2\im } \im k   \right)e^{\im (jx + ky)}
\end{align*}

We also notice that, in contrast with the decomposition in invariant spaces of the $(1, 1)$ Taylor-Green vortex, the spaces $L_0^2(\mathbb T^2)_{[\kappa_1, \kappa_2]}^{[C]}$ do not admit a further decomposition regarding the evenness or oddness in $x$ or $y$. The reason is that, unless $\kappa_1 = 0$ (resp. unless $\kappa_2 = 0$), the space does not contain odd or even nonzero functions in $x$ (resp. in $y$).
\color{black}

\begin{theorem} \label{thm:mn} Consider the $(m, n)$-Taylor-Green vortex with $m \leq n$ such that 
\begin{equation} \label{eq:condition}
4a - 1 < \sqrt{n^2/m^2}+1 < 4a+1
\end{equation}
for some $a \in \mathbb Z_+$. Then, $\cL_{m, n} : L_0^2(\mathbb T^2) \to L_0^2 (\mathbb T^2)$ has a real instability.
\end{theorem}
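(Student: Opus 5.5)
The plan is to deduce the statement from Corollary~\ref{lem:weakinstability}, applied to $\cL_{m,n}$ restricted to one of the invariant subspaces $L_0^2(\TT^2)^{[\mathrm{C}]}_{[\kappa_1,\kappa_2]}$ of \eqref{eq:nm_subdivision}, possibly cut down further by a parity symmetry. The subspace should satisfy two conditions: (i) $\cH_{m,n}$ has exactly one negative direction $E_u$ in it, and (ii) the streamline average $P$ kills $E_u$. Corollary~\ref{lem:weakinstability} then gives $\Phi(0)<0$, while $\Phi(\lambda)\to1$ as $\lambda\to+\infty$ by \eqref{eqn:phi1}. Hence $\Phi$ has a positive real zero, which Theorem~\ref{thm:instability_criterion} turns into a real unstable eigenvalue. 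The splitting $\cH=\cH_s+\cH_u$ is the same as in Section~\ref{subsec:difficult_stability}: $\cH_u$ is a rank-one multiple of $E_u\otimes E_u$, and $\cH_s$ is coercive because $\cH_{m,n}$ is the Fourier multiplier $1-(m^2+n^2)/(j^2+k^2)$. If $\cH_{m,n}$ has a kernel in the chosen class, I would put it into ${\rm V}_u$ exactly as in the proof of Corollary~\ref{lem:weakinstability}.

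\textbf{Step 1: choice of subspace and of $E_u$.} The negative directions of $\cH_{m,n}$ are the modes with $j^2+k^2<m^2+n^2$; since $m\le n$, these include the axis modes $(j,0)$ with $j\le n$. I would work with a real subspace, for instance functions even in $y$ and depending only on the residue class of the $x$-frequency modulo $2m$, so that conjugation symmetry keeps $\Phi$ real on $\RR$. The class is chosen to contain a single mode $\cos(jx)$ with $j^2<m^2+n^2$ and no other negative mode. The natural candidate is the largest admissible $j$ with $j/m$ an odd integer, congruent to $n/m\pm1$ modulo $4$. This is where I expect the hypothesis $4a-1<n/m+1<4a+1$ to enter: it should place exactly one frequency $j=(4a-1)m$ (or similar) below the threshold $\sqrt{m^2+n^2}$ within the residue class, and exclude the others. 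This bookkeeping with the checkerboard index $\mathrm{C}$ is tedious but elementary.

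\textbf{Step 2: $PE_u=0$.} Here I use symmetry instead of the period-function computation of Proposition~\ref{prop:OddEvstability}. In the cell $[l\pi/m,(l+1)\pi/m]\times[l'\pi/n,(l'+1)\pi/n]$, the reflection $x\mapsto (2l+1)\pi/m - x$ fixes $\psi_{m,n}$ and maps every closed streamline of the cell onto itself, preserving the measure $d\sigma/|\nabla\psi|$. Hence by \eqref{eqn:projectionkerJ} the streamline average of any function odd under this reflection vanishes. The mode $\cos(jx)$ is odd about $x=(2l+1)\pi/(2m)$ for every $l$ exactly when $j/m$ is an odd integer. So, with $E_u$ as in Step~1, $PE_u=0$ on every cell, and since critical level sets are null this gives $PE_u=0$ in $L^2$.

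\textbf{Main obstacle.} The hard step is making Steps~1 and~2 compatible. I need an $\cL_{m,n}$-invariant class in which the only negative mode is odd with respect to all cell-center reflections, while every other mode in the class either is nonnegative for $\cH_{m,n}$ or lies in the kernel (where the corollary tolerates it). I would first enumerate the negative modes $(j,k)$ in each class of \eqref{eq:nm_subdivision}, intersected with $\mathrm{Ev}_y$. Then I would check that under \eqref{eq:condition} some class contains exactly one, of the form $\cos(jx)$ with $j\in m(2\ZZ+1)$. If the bare residue classes contain too many negative modes, I would refine them by the parity symmetries $x\mapsto -x$ and $y\mapsto-y$, which commute with $\cJ_{m,n}$ and $\cH_{m,n}$ up to sign, as in \eqref{signswitch}.
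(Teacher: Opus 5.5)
\textbf{Gap in Step~1.} Your overall architecture matches the paper's. You apply Corollary~\ref{lem:weakinstability} on an invariant parity/checkerboard class and kill $P$ by a reflection symmetry of the streamlines. Your cell-centred reflections $x\mapsto(2l+1)\pi/m-x$ are, if anything, more precise than the paper's wording.

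The problem is Step~1. The class you want, with exactly one negative direction $\cos(jx)$ and $j/m$ odd, does not exist, and \eqref{eq:condition} is not what would produce it.
\begin{itemize}
\item A class defined by the residue of the $x$-frequency modulo $2m$ is not invariant, because $\cJ_{m,n}$ moves $(j,k)$ by $(\pm m,\pm n)$. For instance $\cJ_{m,n}\cos(mx)=\tfrac{mn}{2}\big(1-\cos(2mx)\big)\cos(ny)$, and $\cJ_{m,n}$ applied to $\cos(2mx)\cos(ny)$ has a nonzero $\cos(3mx)$ component.
\item Combine \eqref{eq:nm_subdivision} with the parities $x\mapsto-x$, $y\mapsto-y$ (which commute with $\cJ_{m,n}$ exactly, not up to sign). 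The smallest resulting class containing $\cos(mx)$, or $\cos((4a-1)mx)$, is $(\mathrm{Ev}_x\mathrm{Ev}_y)^{[1]}_{[0,0]}$, spanned by $\cos(j'mx)\cos(k'ny)$ with $j'+k'$ odd. It contains every $\cos(j'mx)$ with $j'$ odd below the threshold, and also $\cos(ny)$.
\item For $m\neq n$ nothing splits this space further. The translation by $(\pi/m,\pi/n)$ and the $\pi$-rotation about a cell centre act on it as $-\uno$. Single reflections such as $x\mapsto\pi/m-x$ anticommute with $\cJ_{m,n}$. The swap $\varrho$ sends $\psi_{m,n}$ to $\psi_{n,m}$, so the $g\pm\im\varrho g$ device of the $(1,1)$ case is unavailable.
\end{itemize}
So the negative directions $\cos(mx)$ and $\cos(ny)$ always sit in the same class, together with $\cos(3mx),\dots$ under the hypothesis. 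Your planned check that some class contains exactly one would fail.

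\textbf{The missing observation.} Corollary~\ref{lem:weakinstability} does not need a single negative direction. It needs an \emph{odd} number of them, with $P$ vanishing on all of them, and this parity is exactly what \eqref{eq:condition} controls. The paper's proof uses the hypothesis as $(4a-1)m<\sqrt{m^2+n^2}<(4a+1)m$. With $m\le n$, the negative modes of $\cH_{m,n}$ in $(\mathrm{Ev}_x\mathrm{Ev}_y)^{[1]}_{[0,0]}$ are then $\cos(ny)$ and $\cos(j'mx)$ for $j'=1,3,\dots,4a-1$, i.e.\ $2a+1$ of them. With the $+1$ outside the root, as you transcribed it, the count can be even: $m=2$, $n=5$, $a=1$ gives only $\cos(2x)$ and $\cos(5y)$.

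Your Step~2 then closes the argument once you add the $y$-reflections $y\mapsto(2l'+1)\pi/n-y$ to handle $\cos(ny)$. In fact every basis vector of $(\mathrm{Ev}_x\mathrm{Ev}_y)^{[1]}_{[0,0]}$ has $j'$ or $k'$ odd, so $P$ vanishes on the whole subspace. This is precisely the paper's proof.
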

\begin{proof}
We consider the space $L_0^2(\TT^2)^{[1]}_{[0,0]}$. Crucially, since we choose $\kappa_1 = \kappa_2 = 0$, we notice that we can decompose our space as the direct sum of the subspaces of even and odd functions, both in $x$ and $y$. Clearly, the operator $\cL_{m,n}$ respects such parity, so the subspace $(\mathrm{Ev}_x \mathrm{Ev}_y)_{[0, 0]}^{[1]} \subset L_0^2(\mathbb T^2)_{[0, 0]}^{[1]}$, is invariant under $\cL_{m,n}$. A basis of this space is 
$$ \mathrm{B} = \{ \cos (j'm x) \cos (k' ny ) \; : \; j', k' \geq 0, \; \mbox{ and } \; j'+k' \equiv 1 \text{ (mod 2) }\}, $$
which clearly diagonalizes $\cH$. 

Our goal is to apply the instability criterion of {Corollary \ref{lem:weakinstability}}, of which we now verify the assumptions.

Regarding the negative directions of $\cH$, applying \eqref{eq:JH_mn} to our basis, we need to look for $j'$ and $k'$ such that $(mj')^2 + (nk')^2 < n^2+m^2$. Since $m \leq n$, negative directions must have $k'\leq 1$. If $k'= 1$, the only negative direction is $\cos (ny)$ ($(j', k') = (0, 1)$). When $k' = 0$, the $j'$ giving negative directions  are odd and such that $(j'm)^2 < m^2 + n^2$. Hence, the negative directions are
$$ \mathrm{N} = \{ \cos (ny) \} \cup \{ \cos (j'x) \; \mbox{ s.t. } \; j' \equiv 1 \mbox{ (mod 2)   and   } (j'm)^2 < m^2+n^2 \}. $$
We then observe that $|\mathrm{N}|$ is odd: indeed, there is an even amount of indices $j'$ in the definition of $\mathrm N$, since by assumption~\eqref{eq:condition} they coincide with the numbers $1, 3, \ldots 4a-1$.

{Next, we claim that the projection over $\ker \cJ$, namely the integral over the streamlines of the $(m,n)$ Taylor-Green vortex operator \eqref{eqn:projectionkerJ}, is constantly $0$ on the space $(\mathrm{Ev}_x \mathrm{Ev}_y)_{[0, 0]}^{[1]}$. By linearity, we reduce to prove that each element of  $\rm{B}$ has $0$ integral over all streamlines. For every element of  $\rm{B}$ either $j'$ or $k'$ is odd. In particular, since $\cos (j'm x)$ changes sign under the change of variables $(x,y) \to ({\pi}/m-x,y)$ while the streamlines are invariant by this transformation, we deduce that any function of the type $\cos (j' m x) f(y)$ with $j'$ odd, or analogously of the type $g(x) \cos (k'n y)$ with $k'$ odd, will project zero to the streamlines. 

We are therefore in the position to apply {Corollary \ref{lem:weakinstability}} to get a real instability. 
}
\end{proof}

\subsection{The (2, 2) Taylor-Green spectrum}\label{sec:2-2}
\begin{figure}[htbp]
    \centering
\includegraphics[width=0.7\textwidth]{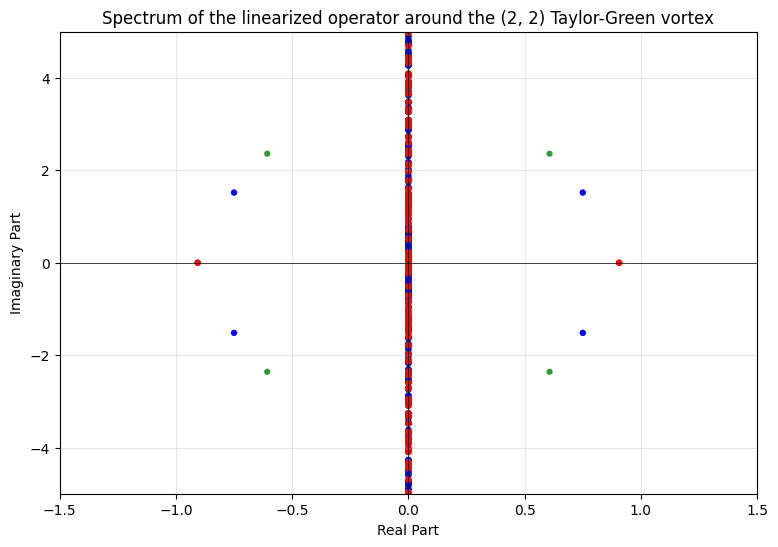}
    \caption{Numerical plot of the spectrum of $\cL_{2, 2}$. Green corresponds to eigenvalues on the invariant space $L_0^2(\mathbb T^2)_{[0, 0]}$, blue to $L_0^2(\mathbb T^2)_{[1, 0]}$ (or $L_0^2(\mathbb T^2)_{[0, 1]}$, which has the same spectrum) and red corresponds to $L_0^2(\mathbb T^2)_{[1, 1]}$. All have essential spectrum on $i \mathbb{R}$. The multiplicity of the eigenvalues on $\mathbb{C}\setminus i\mathbb{R}$ is $4$ for the red or blue ones, and $1$ for the green ones.}
    \label{fig:spectrum_L22}
\end{figure}
While Remark \ref{rem:dilation_TG} and Theorem \ref{thm:mn} just prove the existence of complex and real instabilities for certain $(m, n)$ Taylor-Green vortices, let us remark that our instability criterion is highly generalizable to other choices of $(m, n)$, and in particular, is also able to fully characterize the point spectrum $\sigma (\cL_{m, n}) \setminus i\mathbb R$. Since the divisions in subspaces and cases depend highly on the choice of $(m, n)$, let us illustrate this idea by focusing our discussion on the $(2, 2)$ Taylor-Green vortex. Rather than giving a rigorous proof of the characterization, we will discuss the space splitting and how one could perform the proof on each of those subspaces in a completely analogous way to the proofs we have presented for $\cL_{1, 1}$.

We start studying $\cL_{2, 2}$ by looking at each of the invariant spaces from \eqref{eq:nm_subdivision}. We have plotted numerically the spectrum of $\cL_{2, 2} : L_0^2 (\mathbb T^2)_{[\kappa_1, \kappa_2]} \to L_0^2 (\mathbb T^2)_{[\kappa_1, \kappa_2]}$ for $\kappa_1, \kappa_2 \in \{0, 1\}$ in Figure \ref{fig:spectrum_L22}. In the space $L_0^2(\mathbb T^2)_{[0, 0]}$, the reasoning from Remark \ref{rem:dilation_TG} tells us that the spectrum of $\cL_{2, 2}$ is a $4$-times rescaled version of the spectrum of $\cL_{1, 1}$ over $L_0^2(\mathbb T^2)$ (which was fully characterized in Theorem \ref{linthm}). Thus, we just need to discuss $\cL_{2, 2}$ on the invariant spaces $L_0^2(\mathbb T^2)_{[1, 0]}$ and $L_0^2(\mathbb T^2)_{[1, 1]}$ (since the spectrum on $L_0^2(\mathbb T^2)_{[0, 1]}$ is the same as on $L_0^2(\mathbb T^2)_{[1, 0]}$ by conjugating via the involution defined in \eqref{sigmaoperator}). 

Focusing on the subspaces $L^2_{0}(\mathbb T^2)_{[1, 0]}$ or $L^2_{0}(\mathbb T^2)_{[1, 1]}$, we observe that the involution $\varrho_x f(x, y) = f(-x, y)$ conjugates $\cL_{2, 2}$ in the space  $L_{0}(\mathbb T^2)_{\kappa}^{[0]}$ with  $L_{0}(\mathbb T^2)_{\kappa}^{[1]}$, for $\kappa \in \{ [1, 0], [1, 1] \}$, analogously to the reasoning in \eqref{sigmaoperator}--\eqref{eqn:studiamounosolo}. Hence it suffices to study the spectrum of $\cL_{2, 2}$ on $L_{0}(\mathbb T^2)_{[1, 0]}^{[0]}$ and $L_{0}(\mathbb T^2)_{[1, 1]}^{[0]}$.

\smallskip 
\textbf{Study of $L_0^2(\mathbb T^2)_{[1, 0]}^{[0]}$.} Here, we note that since $\kappa_2 = 0$, we can express the space as the direct sum of even functions in $y$ and odd functions in $y$. Each of those subspaces is clearly invariant under $\cL_{2, 2}$. 

$\rm{Ev}_y(\mathbb T^2)_{[1, 0]}^{[0]}$ has basis $e^{\im (2j'+1)x} \cos (2k'y)$, with $j'\in \mathbb Z$, $k' \geq 0$, $j'+k'$ even, which clearly diagonalizes $\cH_{2, 2}$. We see that $\cH_{2, 2}$ has two negative modes, namely: $e^{ix}$ and $e^{-ix} \cos (2y)$. In this subspace we can find the blue complex unstable eigenpair $(\lambda_\circ, \bar \lambda_\circ)$ from Figure \ref{fig:spectrum_L22} (the complex pair with smaller imaginary part) and a rigorous proof could be done combining our criterion in Theorem \ref{thm:instability_criterion} with a computer-assisted winding argument as in Lemma \ref{lemma:holo_root}. Since there are only two negative modes of $\cH_{2, 2}$, that would imply the rest of the spectrum is purely imaginary, completely characterizing the spectrum of $\cL_{2, 2}$ on $\rm{Ev}_y(\mathbb T^2)_{[1, 0]}^{[0]}$. The same eigenvalues appear by symmetry on $\rm{Ev}_y(\mathbb T^2)_{[1, 0]}^{[1]}$ or $\rm{Ev}_x(\mathbb T^2)_{[0, 1]}^{[C]}$ for $C \in \{0, 1\}$, and hence the eigenvalues $\lambda_\circ, \bar \lambda_\circ$ would appear with multiplicity $4$ on $\sigma (\cL_{2, 2})$.

Regarding $\rm{Odd}_y(\mathbb T^2)_{[1, 0]}^{[0]}$, a basis of this space diagonalizing $\cH_{2, 2}$ would be $e^{\im (2j'+1) x} \sin (2k'x)$ with $j'\in \mathbb Z$, $k' \geq 1$ and $j'+k'$ even. The only negative mode is $e^{-\im x} \sin (2y)$. Although this space is not invariant under standard conjugation, it does admit a real structure under the involution $f(x, y) \to \overline{ f(-y, x) }$, and with that real structure $\cL_{2, 2}$ is a real Hamiltonian operator. In particular, if there was an unstable eigenvalue, it would need to be real. Numerics suggest that no such real eigenvalue exist in this space. One could prove that analogously to Proposition \ref{prop:OddEvstability}, combining our instability criterion Theorem \ref{thm:instability_criterion}, the monotonicity formula for $\Phi |_{\mathbb R_+}$ (Proposition \ref{prop:propertiesofPhi}) and estimating $\Phi (0)$.

\smallskip
\textbf{Study of $L_0^2(\mathbb T^2)_{[1, 1]}^{[0]}$.} Even though this space does not contain odd or even nonzero functions in $x$ or $y$, it does contain functions with even and odd symmetry when reflected radially across the point $(0, 0)$. Indeed, if $e^{\im (jx + ky)} \in L_0^2(\mathbb T^2)_{[1, 1]}^{[0]}$, we also have that $e^{\im (-jx -ky)} \in L_0^2(\mathbb T^2)_{[1, 1]}^{[0]}$. We can thus split the space as a direct sum $L_0^2(\mathbb T^2)_{[1, 1]}^{[0], c} \oplus L_0^2(\mathbb T^2)_{[1, 1]}^{[0], s}$ where the first one has basis $\cos ( (2j'+1) x + (2k'+1) y)$ with $j'+k'$ even and $j' \geq 0$; and the second one has basis $\sin ( (2j'+1) x + (2k'+1) y)$ with $j'+k'$ even and $j' \geq 0$. One can reduce to study the spectrum on one of the two spaces by conjugating $\cL_{2, 2}$ with the operator
$\sigma_t f (x, y) = f( \pi / 4-x, \pi / 4-y)$, which sends one basis in the other up to signs. 

The only negative mode of $\cH_{2, 2}$ on $L_0^2(\mathbb T^2)_{[1, 1]}^{[0], c}$ is $\cos (x + y)$, implying that if there is an instability, it must be real. Numerically, we do see a real instability in this space (red eigenvalues in Figure \ref{fig:spectrum_L22}). This could be proven in an analogous way to Proposition \ref{prop:OddEvstability}, but in this case, since the real instability exists, we expect $\Phi (0) < 0$. That would  characterize the spectrum of $\cL_{2, 2} : L_0^2(\mathbb T^2)_{[1, 1]}^{[0], c} \to L_0^2(\mathbb T^2)_{[1, 1]}^{[0], c}$ as $\im \mathbb R \cup \{ \lambda_\triangle, -\lambda_\triangle \}$, with $\lambda_\triangle$ being a simple eigenvalue. It then becomes an eigenvalue of multiplicity $4$ of $\cL_{2, 2} : L_0^2(\mathbb T^2) \to L_0^2 (\mathbb T^2)$ since, due to symmetry, it also appears on $L_0^2(\mathbb T^2)_{[1, 1]}^{[0], s}$, $L_0^2(\mathbb T^2)_{[1, 1]}^{[1], c}$ and $L_0^2(\mathbb T^2)_{[1, 1]}^{[1], s}$.

\medskip

\noindent
{\bf Acknowledgements}
We thank Javier G\'omez-Serrano, Marcus Pasquariello and David Villringer for helpful discussions. 
M. Colombo,  G. Cao-Labora and P. Ventura were supported by the Swiss State Secretariat for Education, Research and Innovation (SERI) under contract number MB22.00034 through the project TENSE. G. Cao-Labora also acknowledges partial support from the MICINN research grant number PID2021–125021NA–I00. M. Dolce was supported  by the Swiss National Science Foundation (SNF Ambizione grant PZ00P2\_223294).

\bibliographystyle{siam}
\bibliography{bibTaylorGreen}

\end{document}